\theoremstyle{plain}
\newtheorem{theorem}{Theorem}[section]
\newtheorem{lemma}[theorem]{Lemma}
\newtheorem{corollary}[theorem]{Corollary}
\newtheorem{proposition}[theorem]{Proposition}
\theoremstyle{definition}
\newtheorem{definition}[theorem]{Definition}
\newtheorem{example}[theorem]{Example}
\newtheorem{remark}[theorem]{Remark}
\newtheorem{Constructions}[theorem]{Constructions}
\newcommand\myref[1]{\ref{#1}}
\newcommand\myhead[1]{\smallskip\noindent\textbf{#1.}}
\DeclareMathOperator\ad{ad}
\DeclareMathOperator\Aut{Aut}
\DeclareMathOperator{\Cd}{C}
\DeclareMathOperator\crk{crk}
\DeclareMathOperator\diag{diag}
\DeclareMathOperator\End{End}
\DeclareMathOperator\Hom{Hom}
\DeclareMathOperator\Lp{L} %Loop
\DeclareMathOperator\Mat{M}
\DeclareMathOperator\orth{\mathfrak{o}}
\DeclareMathOperator\rank{rank}
\DeclareMathOperator\rk{rk}
\DeclareMathOperator\spl{\mathfrak{sl}}
\DeclareMathOperator\ssp{\mathfrak{ssp}}
\DeclareMathOperator\spu{\mathfrak{su}}
\DeclareMathOperator\supp{supp}
\DeclareMathOperator\trace{tr}
\DeclareMathOperator\spann{span}
\newcommand \andd{\quad\text{and}\quad}
\newcommand \id {\text{id}}
\newcommand \order[1]{\left\vert #1 \right\vert}
\newcommand\ot {\otimes}
\newcommand \set[1]{\{#1 \}}
\newcommand \suchthat { \mid }
\newcommand \zero {\{0\}}
\newcommand \bbC {\mathbb C}
\newcommand \bbN {\mathbb N}
\newcommand \bbQ{\mathbb Q}
\newcommand \bbZ {\mathbb Z}
\newcommand \cA {\mathcal A}
\newcommand \cB {\mathcal B}
\newcommand \cC {\mathcal C}
\newcommand \cI {\mathcal I}
\newcommand \cJ {\mathcal J}
\newcommand \cL {\mathcal L}
\newcommand \cM {\mathcal M}
\newcommand \cQ {\mathcal Q}
\newcommand \cS {\mathcal S}
\newcommand \cU {\mathcal U}
\newcommand \cX {\mathcal X}
\newcommand \cZ {\mathcal Z}
\newcommand \fg {{\mathfrak g}}
\newcommand \fh {{\mathfrak h}}
\newcommand \ft {{\mathfrak t}}
\newcommand \al {\alpha}
\newcommand \ep {\varepsilon}
\newcommand \De {\Delta}
\newcommand \gm {\gamma}
\newcommand \Gm{\Gamma}
\newcommand \lm {\lambda}
\newcommand \Lm {\Lambda}
\newcommand \ph{\varphi}
\newcommand \sg {\sigma}
\newcommand\boldm    {{\boldsymbol m}}
\newcommand\boldsg   {{\boldsymbol{\sigma}}}
\newcommand \tal {{\tilde\alpha}}
\newcommand \tC {{\widetilde{C}}}
\newcommand \tDe {{\widetilde{\Delta}}}
\newcommand \tfh {{\widetilde {\mathfrak h}}}
\newcommand \tildeh {{\tilde h}}
\newcommand \tL {{\widetilde{\cL}}}
\newcommand \tQ {{\widetilde{Q}}}
\newcommand \tx {{\tilde x}}
\newcommand \ty {{\tilde y}}
\newcommand \tZ {{\tilde {\mathcal Z}}}
\newcommand \tZp {{\widetilde {\mathcal Z'}}}
\newcommand \tCp {{\widetilde{C'}}}
\newcommand \tDep {{\widetilde{\Delta'}}}
\newcommand \tfhp {{\widetilde {\mathfrak h'}}}
\newcommand \tLp {{\widetilde{\cL'}}}
\newcommand \tQp {{\widetilde{Q'}}}
\newcommand \cLp{{\cL'}}
\newcommand \Lmp{{\Lm'}}
\DeclareMathOperator\rkv{rkv}
\newcommand\base{{\Bbbk}}
\newcommand\Dec{\De^\times}
\newcommand\Dei{\De_{\text{ind}}}
\newcommand\Deic{\Dei^\times}
\newcommand{\ditto}{"}
\newcommand\dL[2]{\cL_{#1}^{#2}}
\newcommand\extra{\text{ex}}
\newcommand\lng{\text{lg}}
\newcommand\pab{{\langle \beta, \alpha^\vee \rangle}}
\newcommand\pr{{\ph_\text{r}}}
\newcommand\pe{{\ph_\text{e}}}
\newcommand\ps{{\ph_\text{s}}}
\newcommand\qg{x} % Quantum generator
\newcommand\short{\text{sh}}
\newcommand \Tindex[4]{{\vphantom{#2}}^{#1}{\textrm{#2}}_{#3}^{#4}}
\newcommand \type[2]{\textrm{#1}_{#2}}
\newcommand\nl{\newline}  %For tables
\newcommand\rtj[1]{\hphantom{}\hfill #1} %%Right justify a table line
\newcommand\drop{\hspace{0pt}\nl}  %%Lower a table line one line.
\newcommand\relscale[1]{}
\begin{document}

\title[Lie tori]{Some isomorphism invariants for Lie tori}
\author{Bruce Allison}

\address[Bruce Allison]{Department of Mathematical and Statistical Sciences  \\ University of Alberta \\Edmonton, Alberta  T6G 2G1\\  Canada}
\email{ballison@ualberta.ca}
\subjclass[2000]{17B65, 17B67} \keywords{Lie tori, extended affine Lie algebras}
\date{June 11, 2011}

\begin{abstract}
In this paper we study the isomorphism problem for
centreless Lie tori that are fgc (finitely generated as modules over their centroid).
These Lie tori play a important role in the theory of extended affine Lie algebras
and of multiloop Lie algebras.  We introduce four isomorphism invariants for fgc
centreless Lie tori,  and use them together with known structural results
to investigate the classification problem  for fgc centreless Lie tori up to isomorphism.  
\end{abstract}

\maketitle

Suppose that  $\base$ is a field of characteristic 0,
$\Lm$  is a finitely generated free abelian group, and $\De$
is an  irreducible finite root system.
A Lie torus of type $(\De,\Lm)$ is a Lie algebra $\cL$
over $\base$ that has two compatible gradings, one by the root lattice $Q$ of $\De$ and the
other by $\Lm$, such that a list of natural axioms hold (see Definition \ref{def:Lietorus}).
In that case the rank of $\Lm$  is called the \emph{nullity} of $\cL$.
Lie tori were introduced by Yoshii in  \cite{Y3,Y4} and, in an equivalent form that we use here, by Neher
in \cite{Neh1}.

Centreless (zero centre) Lie tori are of fundamental importance in the theory
of extended affine Lie algebras (EALAs), where they are used as the starting point
for the construction of all EALAs  \cite{Neh2}.
Perhaps the best known example occurs in nullity 1.  In that
case, any centreless Lie torus is isomorphic to the derived algebra modulo its centre
of an affine Kac-Moody Lie algebra $\fg$ \cite{ABGP},
and the full affine algebra $\fg$ is constructed
from this Lie torus by the familiar process of affinization.

In this article, we focus our attention on
centreless Lie tori that are \emph{fgc} (finitely generated as modules over their centroids).
We do this for two reasons. First, it is these Lie tori that play an important role in the
study of multiloop  Lie algebras; and vice versa (see more about this in Section  \myref{sec:Lietori}).
Second, it is known that the fgc assumption excludes only one family of centreless Lie tori
(see the discussion preceding Theorem~\myref{thm:structure}).

The structure of fgc centreless Lie tori is now quite well understood, using work
of a number of authors over a period of almost 15 years.    However, the isomorphism problem,
by which we mean the problem  of determining when two such Lie tori are isomorphic, is much less  understood.
Note that here and subsequently, the term isomorphic means
isomorphic as (ungraded) algebras, unless mentioned to the contrary.

The isomorphism problem for fgc centreless Lie tori has been solved
in nullities 0, 1 and 2.  Indeed, in nullities 0 and 1, a solution follows from classical
conjugacy theorems for maximal split toral $\base$-subalgebras of finite dimensional simple Lie
algebras and affine Kac-Moody Lie algebras respectively.  (See Sections 5.4 and 6.3 in
\cite{ABP3}.) In nullity 2,
the problem was solved  in \cite{ABP3} as part of the classification of nullity 2
multiloop Lie algebras.  (See \cite[Cor. 10.1.3 and Thm. 13.3.1]{ABP3}.)  In this paper, we consider
the problem for arbitrary nullity.   As one might expect, our
approach is to look for isomorphism invariants.

In order to describe some of our results, we briefly outline the structure of this paper, which
begins in Sections \ref{sec:prelim}--\ref{sec:cssLT} with some basic definitions
and properties of Lie tori.

In Section \ref{sec:closureLT}, we investigate the central closure
$\tL$ of an fgc centreless Lie torus $\cL$ of type $(\De,\Lm)$, which is obtained
from $\cL$ by extending the base ring from the centroid $C$ of $\cL$ to its
quotient field $\tC$.
It is known that $\tL$ is a finite dimensional isotropic central
simple Lie algebra over $\tC$, and hence the
theory of such Lie algebras can be brought to bear on our problem.
The main result in this section, Theorem \ref{thm:max}, describes
an explicit maximal split toral $\tC$-subalgebra $\tfh$ of $\tC$.
From this we  deduce Corollary \ref{cor:relativetype}, which
asserts that the relative type of $\tL$ is the type of the given root system
$\De$.  We note that Corollary \ref{cor:relativetype} was a
basic tool in the article   \cite{ABP3} mentioned above, but its proof
was left to be presented in this article.

In Section \ref{sec:inv},
we show that an fgc centreless Lie torus $\cL$ of type $(\De,\Lm)$
has four isomorphism invariants:
(i) the   type of the root system $\De$, which is called the \emph{root-grading type} of $\cL$; (ii)  the nullity
of $\cL$;  (iii) the rank of $\cL$ as a module over its centroid $C$, which
is called the \emph{centroid rank}  of $\cL$; and
(iv) a vector of positive integers, called the
\emph{root-space rank vector}  of $\cL$, that lists the ranks over $C$
of the root spaces of $\cL$ in the $Q$-grading.
Indeed,   the invariance of the centroid rank is clear.  However,
the other three quantities are defined using the graded structure of $\cL$
and hence their invariance requires more argument.
We establish the
invariance of the root-grading type and the root-space rank vector
using the results of Section~\ref{sec:closureLT}.  We also see that
invariance of the nullity follows easily from known facts about Lie tori.

We note that the four  invariants just discussed are rational, by which we mean, as in
\cite{Se}, that they are
defined without using base ring extension.  We also note that,
up to this point in the paper, our methods are
elementary, using for the most part
linear algebra, $\spl_2$-theory and
facts from \cite[Chap.~I]{Se} about finite dimensional central simple Lie
algebras.  For another approach, see \cite{P2}, \cite{GP1} and \cite{GP2},
where tools from Galois cohomology are used to study the isomorphism problem for forms
of algebras over Laurent polynomial rings and in particular for multiloop  Lie algebras.

In Section \ref{sec:isotopy},  we recall an equivalence relation for Lie tori, called \emph{isotopy}, that is finer than isomorphism as it takes into account the grading
\cite{ABFP2,AF}.
We observe that the group $\Lm/\Gm(\cL)$ is an isotopy invariant
(but not yet an isomorphism invariant) of  a centreless  Lie torus $\cL$, where $\Gm(\cL)$ denotes the $\Lm$-support of the centroid of $\cL$.  The main result of the section is a simple characterization of isotopy for centreless Lie tori.

In  the rest of the paper, we assume that  $\base$ is algebraically closed
and we apply the invariants from
Sections \ref{sec:inv} and \ref{sec:isotopy} to study classification and the isomorphism problem for fgc centreless Lie tori.
First in Section \ref{sec:structure} we summarize in one theorem the known structure theorems for fgc centreless Lie tori. It states that any such Lie torus is
either classical, which means roughly that it can be constructed as a special linear Lie algebra,
a special unitary Lie algebra, a special symplectic Lie algebra, or an orthogonal
Lie algebra over an associative torus; or it is
one of 27 Lie tori (defined for each sufficiently large nullity)  that we call exceptional.
Since the statements of the structure  theorems are spread over many papers,
we hope that our summary will be of independent interest to the reader.
Included in this section is a table, numbered as Table \ref{tab:exceptional}, of our invariants for exceptional Lie tori, with references to the literature.

In Section~\ref{sec:invclass}, we show how to calculate the invariants for classical Lie tori, and list the results in two tables, numbered as Tables \ref{tab:classical1} and \ref{tab:classical2}.  The three tables are then applied in
Section \ref{sec:isomorphism} to obtain results about the isomorphism problem for fgc  centreless Lie tori.
We show that the classes of exceptional and classical Lie tori have no overlap and that the four classes of classical Lie tori are similarly disjoint.  We then solve the isomorphism problem for special symplectic Lie tori and orthogonal Lie tori (the latter is easy), and we reduce the problem for exceptional Lie tori to consideration
of at most five particular algebras (in each nullity).  This reduces the classification of fgc centreless Lie tori to the separate isomorphism problems for (1) five particular exceptional Lie tori, (2) special linear Lie tori, and (3) special   unitary Lie tori.

In the final section, we discuss  these three problems under an additional conjugacy assumption
for certain (but not all) maximal split toral $\base$-subalgebras of an fgc centreless
Lie torus. The additional assumption is reasonable
since work in progress by Chernousov, Gille and Pianzola \cite{CGP}
will show that it always holds  (see
Remark \ref{rem:CGP}).   Under the conjugacy assumption,
we show that isotopy and isomorphism coincide for fgc centreless Lie tori
and use this to
complete the classification of exceptional Lie tori.  Also under the
conjugacy assumption, we complete the classification of special linear
Lie tori, leaving only the isomorphism problem for special unitary Lie tori
to be solved.

Finally, we note that the conjugacy assumption could have been used earlier
in the paper to demonstrate the invariance of the root-grading type and the root-space rank vector.
However, we did not do that since
we understand that \cite{CGP}  uses deep results from the  theory of group-schemes, whereas our goal
has been to deduce as much as possible about the isomorphism problem for
Lie tori using self-contained and elementary methods.

\emph{Acknowledgments.}  First, we thank Arturo Pianzola for carefully reading
an earlier version of this paper and making several suggestions that
substantially improved its presentation. We also thank him for keeping
us informed of the work in \cite{CGP}  on conjugacy.   Second, we thank the referee who noticed
and filled a small gap in the proof of Theorem \ref{thm:isotopychar}.  The referee also very helpfully suggested
the expansion, from the first version of the paper, of the material now included in Sections \ref{sec:invclass}
and~\ref{sec:isomorphism}.

\section{Preliminaries}
\label{sec:prelim}

Throughout the paper, \emph{we assume that $\base$ is a field of characteristic 0}.
Unless mentioned to the contrary, algebra will mean algebra over $\base$.

\myhead{The centroid} 
Suppose that $\cA$ is an algebra over $\base$.
The \emph{centroid }of $\cA$ is the  subalgebra of $\End_\base(\cA)$ defined by
\[\Cd_\base(\cA)  := \set{c\in \End_\base(\cA) \suchthat c(x\cdot y)
= c(x)\cdot y = x \cdot c(y) \text{ for } x,y\in \cA}.\]
Then $\base\, \id_\cA$ is a subalgebra of $\Cd_\base(\cA)$, which  we identify
with $\base$ in the evident fashion when $\cA \ne 0$.
The algebra $\cA$ is said to be \emph{central} if  $\Cd_\base(\cA) = \base\, \id_\cA$.

Note that $\cA$ is naturally a left $\Cd_\base(\cA)$-module; and we say that
$\cA$ is \emph{fgc} if this module is finitely generated.

The algebra $\cA$ is said to be \emph{perfect} if
$\cA \cdot \cA = \cA$, where $\cdot$ denotes the product in $\cA$.
If $\cA$ is perfect, then $\Cd_\base(\cA)$  is commutative.
If $\cA$ is simple (and hence perfect), then $\Cd_\base(\cA)$ is a field
and $\cA$ is a central simple algebra as an algebra over $\Cd_\base(\cA)$.

If $\cA$ is a  unital associative algebra, we denote the \emph{centre} of
$\cA$ by $Z(\cA)$.  Then the map
$z\mapsto \ell_z$ is an isomorphism of $Z(\cA)$ onto
$\Cd_\base(\cA)$, where $\ell_z \in \End_\base(\cA)$ is left multiplication
by $z$.

\begin{remark}
\label{rem:cinduced}

(i) If  $\cA$ is an algebra over an extension field $F$ of $\base$ and $\cA$ is perfect
(over $F$ or equivalently over $\base$), then $\Cd_\base(\cA) = \Cd_F(\cA)$.

(ii) Any isomorphism  $\ph: \cA \to \cA'$ of algebras induces a unique isomorphism
$\chi : \Cd_\base(\cA) \to \Cd_\base(\cA')$ such that
$\ph(cx) = \chi(c)\ph(x)$ for $c\in \Cd_\base(\cA)$, $x\in\cA$.
\end{remark}

\myhead{Involutions}
If $\cA$ is an algebra, an  \emph{involution} of
$\cA$ is an anti-automorphism ``$-$'' of $\cA$ (so $\overline{xy} = \bar y \bar x$ for $x,y\in\cA$)
of period 2.  In that case, we call $(\cA,-)$ an \emph{algebra with involution}.
If  the involution is fixed, we often use the notation
\begin{equation*}
\label{eq:notationA+-}
\cA_+ = \set{x\in \cA \suchthat \bar x = x} \andd
\cA_- = \set{x\in \cA \suchthat \bar x = - x},
\end{equation*}
in which case $\cA = \cA_+ \oplus \cA_-$.
If $\cA$ is unital and associative,
the   \emph{centre} of $(\cA,-)$ is defined as  $Z(\cA,-) := \set{x\in Z(\cA) \suchthat \bar x = x} = Z(\cA)\cap \cA_+$.

\myhead{Graded algebras}
If $\Lm$ be an abelian group and
$\cA = \bigoplus_{\lm\in\Lm} \cA^\lm$ is a $\Lm$-graded
algebra, we use the notation
$\supp_\Lm(\cA) := \set{\lm\in\Lm \suchthat \cA^\lm \ne \zero}$ for the
\emph{$\Lm$-support} of~$\cA$.

If $\cA$ is a $\Lm$-graded algebra and $\cA'$ is a $\Lm'$-graded algebra
we say that $\cA$ and $\cA'$ are \emph{isograded-isomorphic} if there
exists an algebra isomorphism $\ph : \cA \to \cA'$ and a group
isomorphism $\ph_\text{gr} : \Lm \to \Lm'$ such that
$\ph (\cA^\lm ) = {\cA'}^{\ph_\text{gr}(\lm)}$ for
$\lm\in \Lm$.

There   is an evident definition of a graded algebra with involution
(the involution is assumed to be graded) and of isograded-isomorphism for graded algebras with involution
(the map is assumed to preserve the involutions).

\myhead{Irreducible finite root systems}
As  in \cite{AABGP} and \cite{Neh1},
it will be convenient
for us to work with root systems that contain 0.
So, if $\cX$ is a finite dimensional vector space over $\base$,
by an \textit{irreducible finite root system} in $\cX$ we will mean
a finite subset $\De$ of $\cX$  such that $0\in \De$ and
$\Dec := \De \setminus\set{0}$ is
an irreducible finite root system in $\cX$ in the usual sense
(see \cite[chap.~VI, \S 1, D\'efinition~1]{Bo}).
We say that $\De$ is
\textit{reduced} if $2 \al \notin \Dec$ for $\al \in \Dec$.

An irreducible  finite root system $\De$ has one of the following types:
$\mathrm{A}_\ell\, (\ell \ge 1)$, $\mathrm{B}_\ell\,  (\ell \ge 2)$,
$\mathrm{C}_\ell\,  (\ell \ge 3)$, $\mathrm{D}_\ell\,  (\ell \ge 4)$,
$\mathrm{E}_6$, $\mathrm{E}_7$,
$\mathrm{E}_8$, $\mathrm{F}_4$ or $\mathrm{G}_2$  if $\De$
is reduced; or  $\mathrm{BC}_\ell\,  (\ell \ge 1)$ if $\De$
is not reduced.

We will use  the following notation  for an irreducible finite
root system $\De$ in $\cX$.  Let
\[ Q(\De) := \spann_\bbZ(\De)\]
be the \emph{root lattice} of $\De$.
Let $\cX^*$ denote the dual space of $\cX$,
let $\langle\ ,\ \rangle : \cX\times \cX^* \to k$
denote the natural pairing, and, if $\al\in \Dec$, let $\al^\vee$
denote the \textit{coroot} of $\al$ in $\cX^*$.  Finally, let
\[\Deic := \Dec\setminus 2\Dec\]
denote the set of \textit{indivisible}
nonzero roots in $\De$, and let $\Dei := \Deic\cup\set{0}$.
Then $\Dei$  is a  reduced irreducible finite root system
in $\cX$;  and, if $\De$ is reduced, we have~$\Dei~=~\De$.

\section{Split toral subalgebras and relative type} 
\label{sec:splittoral}

Suppose  that $\cL$ is a Lie algebra over $\base$.

A  \emph{split toral $\base$-subalgebra} of $\cL$ is an
abelian\footnote{It is not difficult to show that the abelian assumption
is superfluous (although we will not use this fact).}
$\base$-subalgebra $\fh$ of $\cL$ such
that there is a $\base$-basis for $\cL$ consisting
of simultaneous eigenvectors (with corresponding eigenvalues in $\base$)
for all of the operators $\ad(h)$,
$h\in \fh$.

If $\fh$ is a split toral $\base$-subalgebra of $\cL$, then  we have
the decomposition $\cL = \bigoplus_{\al\in \fh^*}\cL_\al$, called the \emph{root-space decomposition}
of $\cL$ relative to $\fh$, where
\[\cL_\al = \set{ x \in \cL \suchthat [h,x] = \al(h)x \text{ for } h\in \fh}\]
for $\al\in\fh^*$.
We set
\[\De_\base(\cL,\fh) := \set{\al\in \fh^* \suchthat \cL_\al \ne 0},\]
and we call $\De_\base(\cL,\fh)$ the \emph{root system} of $\cL$
relative to $\fh$.

The following formal result is well-known and easily checked
using Remark~\ref{rem:cinduced}.

\begin{lemma}
\label{lem:formal}
Suppose that $\cL$ (resp.~$\cL'$)
is a central perfect Lie algebra over a field $F$ (resp.~$F'$) that is an extension
field of $\base$.  Suppose that $\ph: \cL \to \cL'$ is a
$\base$-algebra isomorphism, $\fh$ is a split toral $F$-subalgebra
of $\cL$, and $\fh' = \ph(\fh)$. Then $\fh'$ is a split toral $F'$-subalgebra
of $\cL'$, which is maximal if and only if $\fh$ is maximal.
Moreover, setting $\De = \De_F(\cL,\fh)$, $Q = \spann_\bbZ(\De)$,
$\De' = \De_{F'}(\cL',\fh')$ and $Q' = \spann_\bbZ(\De')$,
there exists a unique group isomorphism $\rho : Q \to Q'$ such that
$\ph(\cL_\al) = \cL'_{\rho(\al)}$ for $\al\in Q$.  Furthermore,
$\rho(\De) = \De'$ and $\dim_F(\cL_\al) = \dim_{F'}(\cL'_{\rho(\al)})$
for $\al\in Q$.
\end{lemma}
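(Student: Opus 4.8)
The plan is to verify the three conclusions in order: first that $\fh'$ is a split toral $F'$-subalgebra (with maximality transferred along $\ph$), then the existence and uniqueness of the lattice isomorphism $\rho$, and finally the compatibility $\rho(\De)=\De'$ together with the dimension equality. The crucial preliminary observation is that although $\ph$ is only a $\base$-algebra isomorphism, Remark~\ref{rem:cinduced} lets us upgrade it: since $\cL$ and $\cL'$ are perfect, their centroids are fields, namely $F$ and $F'$ respectively (after the identification of $F\,\id$ with $F$); by Remark~\ref{rem:cinduced}(ii), $\ph$ induces a field isomorphism $\chi\colon F\to F'$ with $\ph(cx)=\chi(c)\ph(x)$. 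Thus $\ph$ is $\chi$-semilinear, and every construction carried out over $F$ on the $\cL$ side transports to the corresponding construction over $F'$ on the $\cL'$ side.

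First I would treat $\fh'$. Given $h\in\fh$, the operator $\ad_{\cL'}(\ph(h))$ equals $\ph\circ\ad_{\cL}(h)\circ\ph^{-1}$, so the image under $\ph$ of an eigenbasis for the commuting family $\{\ad(h):h\in\fh\}$ is a $\chi$-semilinear image that is again a simultaneous eigenbasis for $\{\ad(\ph(h)):h\in\fh'\}$, with eigenvalues $\chi(\al(h))\in F'$. Hence $\fh'$ is abelian and split toral over $F'$. For maximality: if $\fh'$ were properly contained in a split toral $F'$-subalgebra $\fk'$, then $\ph^{-1}(\fk')$ would — by the same argument applied to $\ph^{-1}$, which is $\chi^{-1}$-semilinear — be a split toral $F$-subalgebra of $\cL$ properly containing $\fh$, contradicting maximality of $\fh$; the converse direction is symmetric. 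This gives the "maximal if and only if" clause.

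Next, the lattice isomorphism. For $\al\in\fh^*$ define $\rho_0(\al)\in(\fh')^*$ by $\rho_0(\al)=\chi\circ\al\circ(\ph|_\fh)^{-1}$; a direct check on eigenvectors shows $\ph(\cL_\al)=\cL'_{\rho_0(\al)}$ for every $\al\in\fh^*$, and in particular $\ph$ carries the root-space decomposition relative to $\fh$ to the one relative to $\fh'$, so $\rho_0$ restricts to a bijection $\De\to\De'$. Since $\ph$ respects the bracket, $\ph([\cL_\al,\cL_\be])\subseteq[\cL'_{\rho_0(\al)},\cL'_{\rho_0(\be)}]$ forces $\rho_0(\al+\be)=\rho_0(\al)+\rho_0(\be)$ whenever $\cL_\al,\cL_\be\ne0$ and $[\cL_\al,\cL_\be]\ne0$; more robustly, $\rho_0$ is already additive on all of $\fh^*$ by its formula, since $\chi$ is additive and $(\ph|_\fh)^{-1}$ is linear over $\base$. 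Restricting the additive map $\rho_0$ to the subgroup $Q=\spann_\bbZ(\De)$ and noting $\rho_0(\De)=\De'$ yields a group homomorphism $\rho\colon Q\to Q'$ onto $Q'=\spann_\bbZ(\De')$; applying the same to $\ph^{-1}$ gives a two-sided inverse, so $\rho$ is an isomorphism. Uniqueness is immediate: any group homomorphism $Q\to Q'$ with $\ph(\cL_\al)=\cL'_{\psi(\al)}$ for $\al\in\De$ agrees with $\rho$ on $\De$, hence on $Q=\spann_\bbZ(\De)$. Finally, $\ph|_{\cL_\al}\colon\cL_\al\to\cL'_{\rho(\al)}$ is a $\chi$-semilinear bijection of vector spaces, so $\dim_F(\cL_\al)=\dim_{F'}(\cL'_{\rho(\al)})$; for $\al\in Q\setminus\De$ both sides are $0$.

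The only genuine subtlety — and the step I would be most careful about — is the very first one, the passage from a bare $\base$-isomorphism to a field-semilinear map via the induced centroid isomorphism; everything afterward is bookkeeping with eigenvectors and the bracket. The hypotheses "central" and "perfect" are exactly what make the centroid equal the ground field and make Remark~\ref{rem:cinduced} applicable, so once that reduction is in place the argument is formal, which is why the lemma can be asserted as "well-known and easily checked."
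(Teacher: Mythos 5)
Your proof is correct and follows exactly the route the paper intends: the paper gives no written proof, stating only that the lemma is ``well-known and easily checked using Remark~\ref{rem:cinduced},'' and your argument is precisely that check --- use centrality and perfectness to identify the centroids with $F$ and $F'$, invoke Remark~\ref{rem:cinduced}(ii) to make $\ph$ a $\chi$-semilinear isomorphism, and then transport eigenbases, root spaces, and dimensions. No gaps; the semilinearity observation you flag as the key step is indeed the whole content of the lemma.
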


A  finite dimensional central simple Lie algebra over $\base$ is said to be \emph{isotropic}
if it contains a nonzero split toral $\base$-subalgebra.

\begin{theorem}
\label{thm:Selig} \emph{\cite[\S I.2]{Se}} Suppose that $\cL$ is an isotropic finite dimensional central simple
Lie algebra over $\base$ and $\fh$ is a maximal split toral $\base$-subalgebra
of  $\cL$.  Then
\begin{itemize}
\item[(i)] $\De_\base(\cL,\fh)$ is an irreducible finite  root system
in $\fh^*$.
\item[(ii)] If $\fh'$ is another maximal split toral $\base$-subalgebra
of  $\cL$, there exists an automorphism $\ph$ of $\cL$ such that
$\ph(\fh) = \fh'$.
\end{itemize}
\end{theorem}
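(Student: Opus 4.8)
The plan is to recover both parts from the basic structure theory used throughout \cite[Chap.~I]{Se}: since $\cL$ is finite dimensional central simple over a field of characteristic $0$, its Killing form $\kappa$ is nondegenerate and invariant, and $\cL\ot_\base\bar\base$ is again simple. For (i), note first that $0\in\De_\base(\cL,\fh)$ because $\zero\ne\fh\subseteq\cL_0$. Invariance of $\kappa$ gives $\kappa(\cL_\al,\cL_\be)=0$ unless $\al+\be=0$, so $\kappa$ pairs $\cL_\al$ nondegenerately with $\cL_{-\al}$; hence the root set is symmetric and $[\cL_\al,\cL_{-\al}]\ne\zero$ for each nonzero $\al$. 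Using that the centralizer $\cL_0$ of $\fh$ is reductive and that $\fh$ is a maximal split toral subalgebra of it, one extracts for each nonzero $\al$ an $\spl_2$-triple $(e_\al,h_\al,f_\al)$ with $e_\al\in\cL_\al$, $f_\al\in\cL_{-\al}$, $h_\al\in\fh$, and $\al(h_\al)=2$. Feeding $\ad(e_\al),\ad(h_\al),\ad(f_\al)$ into the representation theory of $\spl_2$ acting on the finite dimensional space $\cL$ yields the integrality of $\langle\be,\al^\vee\rangle$ and the $\al$-string structure, while $s_\al:=\exp(\ad e_\al)\exp(-\ad f_\al)\exp(\ad e_\al)\in\Aut(\cL)$ restricts to the reflection in $\al$ on $\fh^*$. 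These are exactly the data needed to conclude that $\De_\base(\cL,\fh)\setminus\zero$ is a (possibly non-reduced) finite root system in $\fh^*$, and its irreducibility follows from simplicity of $\cL$: were the root system to decompose as a nontrivial orthogonal union, the span of one component together with its root spaces would be a proper nonzero ideal. (Alternatively, identify $\De_\base(\cL,\fh)\setminus\zero$ with the nonzero restrictions to $\fh$ of the roots of $\cL\ot_\base\bar\base$ relative to a Cartan subalgebra containing $\fh_{\bar\base}$, and invoke the standard fact that such a restricted system is an irreducible root system.)

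For (ii), let $E\subseteq\Aut(\cL)$ be the subgroup generated by all $\exp(\ad x)$ with $x$ an $\ad$-nilpotent element lying in a nonzero root space relative to some maximal split toral $\base$-subalgebra (each such $\ad x$ is nilpotent, so these exponentials are genuine automorphisms). One checks that $E$ permutes the set $\cT$ of maximal split toral $\base$-subalgebras of $\cL$, and the heart of the matter is to show this action is transitive. The cleanest route passes to the algebraic group $G=\mathbf{Aut}(\cL)^{\circ}$, which is semisimple over $\base$ with $\operatorname{Lie}(G)=\Der(\cL)=\ad(\cL)$: the maximal split toral $\base$-subalgebras of $\cL$ correspond exactly to the maximal $\base$-split tori of $G$, these are all conjugate under $G(\base)$ by the Borel--Tits conjugacy theorem, and since conjugation on $\ad(\cL)$ is intertwined with the action on $\cL$, the conjugating element $g\in G(\base)\subseteq\Aut(\cL)$ satisfies $g(\fh)=\fh'$. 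Seligman's own argument instead stays inside $\cL$, proving transitivity of $E$ on $\cT$ by a rationality-sensitive version of the Chevalley exchange argument for conjugacy of Cartan subalgebras.

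The main obstacle is precisely this conjugacy statement in (ii). Over $\bar\base$ it reduces to the classical conjugacy of Cartan subalgebras, but over a general field one has to produce a conjugating automorphism assembled from split data, which forces one to use either the substantial Borel--Tits machinery for split tori or Seligman's careful bookkeeping of rationality in the exchange argument. By contrast, once the triples $(e_\al,h_\al,f_\al)$ are in hand, part (i) is routine; the only places that need care are the reductivity of $\cL_0$ (which is what makes these triples with $h_\al\in\fh$ available) and the mechanical verification that the relative system satisfies every root-system axiom.
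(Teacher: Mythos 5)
The paper does not actually prove this theorem: it is quoted verbatim from Seligman \cite[\S I.2]{Se}, and the whole design of the surrounding sections is to treat it as a black box so that the later arguments stay elementary. Measured against that source, your part (i) follows essentially Seligman's own route (Killing-form pairing of $\cL_\al$ with $\cL_{-\al}$, reductivity of the centralizer $\cL_0$, relative $\spl_2$-triples, reflections realized by $\exp(\ad e_\al)\exp(-\ad f_\al)\exp(\ad e_\al)$, irreducibility from simplicity), and the orthogonal-partition argument for irreducibility is sound. Your part (ii), by contrast, takes a genuinely different path: you reduce conjugacy of maximal split toral $\base$-subalgebras to Borel--Tits conjugacy of maximal $\base$-split tori in $G=\bfAut(\cL)^{\circ}$, whereas Seligman stays entirely inside the Lie algebra with a rationality-aware exchange argument. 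Both work in characteristic $0$; the algebraic-group route is shorter once one grants the dictionary between maximal split toral subalgebras of $\operatorname{Lie}(G)=\ad(\cL)$ and maximal split tori of $G$, but that dictionary (exponentiating a split toral subalgebra to a split torus and checking maximality matches up) is itself a nontrivial step you assert rather than prove, and it imports exactly the group-scheme machinery the author says in the introduction he is trying to avoid. The one point in (i) that deserves more than a wave is the existence, for each nonzero relative root $\al$, of $h_\al\in\fh\cap[\cL_\al,\cL_{-\al}]$ with $\al(h_\al)=2$: since $\cL_\al$ can be high-dimensional and $[\cL_\al,\cL_{-\al}]$ lands only in $\cL_0$, not in $\fh$, producing the triple requires either the projection argument $\kappa([e,f],h)=\al(h)\kappa(e,f)$ combined with nondegeneracy of $\kappa$ on $\fh$ and $\kappa(t_\al,t_\al)\ne 0$, or a Jacobson--Morozov adjustment inside the reductive algebra $\cL_0$; you correctly flag this as the place needing care but do not carry it out.
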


If $\cL$ is an isotropic finite dimensional central simple Lie algebra,
the \emph{relative type} of $\cL$ is defined to be the type of
the root system $\De_\base(\cL,\fh)$, where $\fh$ is a maximal split toral $\base$-subalgebra
of  $\cL$.  By  Theorem \ref{thm:Selig} and Lemma \ref{lem:formal} (with $F = F' = \base$)
this is independent of the choice of $\fh$.

\section{Lie tori}
\label{sec:Lietori}
For the rest of the paper \emph{we assume that $\De$ is an irreducible finite root system
with  $Q = Q(\De)$, and that $\Lm$ is a finitely generated free abelian group}.

This section contains the definition and some basic properties of Lie tori.
We restrict ourselves to the properties that we will need.  For the reader
wanting to learn more about this topic, two recent articles by Neher \cite{Neh3,Neh4}
are recommended.

In  order to recall the definition of a Lie torus,
we first introduce some  notation
for $Q\times \Lm$-gradings.
Let
\[\cL = \bigoplus_{(\al,\lm)\in Q\times\Lm}\dL{\al}{\lm}\]
be a $Q\times\Lm$-grading on a Lie algebra
$\cL$.\footnote{As is usual in the study of Lie tori,
it is convenient to use the notation $\dL{\al}{\lm}$ rather than $\cL^{(\al,\lm)}$ or $\cL_{(\al,\lm)}$
for the homogeneous component of degree $(\al,\lm)$.}
Then  $\cL = \bigoplus_{\al \in Q} \cL_\al$ is a $Q$-grading of $\cL$ with
\[\cL_\al := \bigoplus_{\lm\in \Lm} \dL{\al}{\lm} \qquad \text{ for }\al\in Q;\]
$\cL = \bigoplus_{\lm \in \Lm} \cL^\lm$ is a
$\Lm$-grading of $\cL$ with
\[\cL^\lm  := \bigoplus_{\al\in Q} \dL{\al}{\lm} \qquad \text{ for } \lm\in \Lm;\]
and we have $\dL{\al}{\lm} = \cL_\al \cap \cL^\lm$.
Conversely if $\cL$ has a $Q$-grading and a $\Lm$-grading
that are \emph{compatible} (which means that each $\cL_\al$ is a $\Lm$-graded
subspace of $\cL$ or equivalently that each $\cL^\lm$ is a $Q$-graded subspace of $\cL$),
then $\cL$ is $Q\times \Lm$-graded with
$\dL{\al}{\lm} = \cL_\al \cap \cL^\lm$.
From either point of view, we can simultaneously
regard $\cL$ as a $Q\times\Lm$-graded algebra, a $Q$-graded algebra and
a $\Lm$-graded algebra; and we correspondingly have the support sets
$\supp_{Q\times \Lm}(\cL)$, $\supp_Q(\cL)$ and
$\supp_\Lm(\cL)$.  We refer to the $Q$-grading as the \emph{root grading} of $\cL$,
and we refer to the  $\Lm$-grading as the \emph{external grading} of $\cL$.

\begin{definition} \cite{Neh1}
\label{def:Lietorus}
A \textit{Lie torus of type $(\Delta,\Lm)$} is a Lie algebra $\cL$ which has the following
properties:
\begin{itemize}
\item[(LT1)]  $\cL$ has a $Q\times\Lm$-grading
$\cL = \bigoplus_{(\al,\lm)\in Q\times\Lm}\dL{\al}{\lm}$ such that
$\supp_Q(\cL) = \De$.
\item[(LT2)]
\begin{itemize}
\item[(i)]  $(\Deic,0) \subseteq \supp_{Q\times \Lm}(\cL)$.
\item[(ii)]
If $(\al,\lm)\in \supp_{Q\times \Lm}(\cL)$ with $\al\in\Dec$,
then there exist elements $e_\al^\lm\in \dL{\al}{\lm}$
and $f_\al^\lm\in \dL{-\al}{-\lm}$ such that $\dL{\al}{\lm} = \base e_\al^\lm$,
$\dL{-\al}{-\lm} = \base f_\al^\lm$ and
\begin{equation}
\label{eq:LT2}
[[e_\al^\lm,f_\al^\lm],x_\beta] = \langle\beta,\al^\vee\rangle x_\beta
\end{equation}
for $x_\beta\in \cL_\beta$,  $\beta\in Q$.
\end{itemize}
\item[(LT3)] $\cL$ is generated as an algebra by the spaces $\cL_\al$, $\al\in \Dec$.
\item[(LT4)] $\Lm$ is generated as a group by $\supp_\Lm(\cL)$.
\end{itemize}
\end{definition}

In the definition given in \cite{Neh1}, it is only assumed   that
$\supp_Q(\cL) \subseteq \De$ in (LT1).
However, our stronger assumption is more convenient here and
it results in no loss of generality (see \cite[Remark 1.1.11]{ABFP2}).

If $\cL$ is a Lie torus, we assume (unless mentioned to the contrary)
that we have made a fixed choice of
a grading $\cL = \bigoplus_{(\al,\lm)\in Q\times\Lm}\dL{\al}{\lm}$ as in (LT1)
and elements $e_\al^\lm$ and $f_\al^\lm$ as in
(LT2)(ii). Thus if $(\al,\lm)\in \supp_{Q\times \Lm}(\cL)$ with $\al\in \Dec$,
then
$(e_\al^\lm, h_\al^\lm, f_\al^\lm)$
is an $\spl_2$-triple in~$\cL$, where  $h_\al^\lm = [e_\al^\lm,f_\al^\lm]$.
Hence the space $\cS_\al^\lm$ spanned by this triple is a $3$-dimensional split
simple Lie subalgebra of $\cL$.

\begin{remark}\label{rem:locfin}
If $(\al,\lm)\in \supp_{Q\times \Lm}(\cL)$ with $\al\in\Dec$,
then $\cL$ is a locally finite dimensional $\cS_\al^\lm$-module under the adjoint action.
Indeed, to see this it suffices to show that $U(\cS_\al^\lm) x_\beta$ is finite dimensional
for $x_\beta\in \cL_\beta$, $\beta\in \De$, where $U(\cS_\al^\lm)$ is the universal
enveloping algebra of $\cS_\al^\lm$.  This fact in turn follows from the
Poincar\'e-Birkhoff-Witt theorem for $\cS_\al^\lm$, \eqref{eq:LT2} and the assumption  that $\De$ is finite.
\end{remark}

\begin{definition}
\label{def:nulltype}
If $\cL$ is a Lie torus of type $(\De,\Lm)$, we define
the \emph{nullity} of $\cL$ to be $\rank_\bbZ(\Lm)$ and
the \emph{root-grading type} of $\cL$
to be the type of $\De$.
\end{definition}

We note that a Lie torus is perfect by \eqref{eq:LT2} and (LT3).

\begin{example}
\label{ex:untwisted}
Suppose that $\dot\fg$ is a finite dimensional split simple Lie algebra
with splitting Cartan subalgebra $\dot\fh$ over $\base$.  Let  $\De = \De_\base(\dot\fg,\dot\fh)$
and $Q = Q(\De)$; and let $\dot \fg = \bigoplus_{\al\in Q} \dot\fg_\al$ be the corresponding
root-space decomposition.
For $n\ge 0$, let
\[R_n  := \base[t_1^{\pm1},\dots,t_n^{\pm1}]\]
be the algebra of Laurent polynomials in $n$ variables over $\base$ with its natural $\bbZ^n$-grading
$R_n = \bigoplus_{\lm\in \bbZ^n} R_n^\lm$.
Then $\dot\fg \otimes R_n$ is an fgc centreless Lie torus of type $(\De,\bbZ^n)$ with
$(\dot\fg \otimes R_n)_\al^\lambda = \dot\fg_\al \otimes R_n^\lambda$ for $(\al,\lambda)\in Q\times \bbZ^n$.
We call $\dot\fg \otimes R_n$ the \emph{untwisted Lie torus} of type $(\De,\bbZ^n)$.
\end{example}

When $\base$ is algebraically closed, there is a twisted version of the above example
which constructs a subalgebra $\Lp(\dot\fg,\boldsg)$ of $\dot\fg \otimes R_n$
from a finite dimensional (split) simple Lie algebra $\dot\fg$
and an $n$-tuple $\boldsg$ of commuting finite order
automorphisms of
$\dot\fg$.\footnote{In \cite{ABFP2} and \cite{Na},
$\Lp(\dot\fg,\boldsg)$ is denoted by $M_{\boldm}(\dot\fg,\boldsg)$.}
The algebra $\Lp(\dot\fg,\boldsg)$ is   called
a \emph{nullity $n$ multiloop Lie algebra}.
If the common fixed
point algebra $\dot\fg^\boldsg$ is nonzero, then
$\Lp(\dot\fg,\boldsg)$ is an fgc
centreless Lie torus of nullity $n$ relative
to some  $Q\times\Lm$ grading on $\Lp(\dot\fg,\boldsg)$ \cite[Thm.~5.1.4]{Na}.
Conversely, any fgc centreless Lie torus of nullity
$n$ is isomorphic to $\Lp(\dot\fg,\boldsg)$ for some
$\dot\fg$ and $\sg$ as above with $\dot\fg^\sg \ne 0$ \cite[Thm.~3.3.1]{ABFP2}.

We will recall some
other constructions of Lie tori in Section \ref{sec:structure}.

We  now prove three lemmas about Lie tori using $\spl_2$-theory.
In each lemma \emph{we assume that $\cL$ is a Lie torus of type $(\De,\Lm)$}, where  we
recall that we are assuming that $\Lm$ is a finitely generated free abelian group.

The first lemma is an analogue for Lie tori of the well-known fact that
any associative $\Lm$-torus  is a domain.  (See Section
\myref{sec:structure} to recall the definition of an associative torus.)

\begin{lemma} \label{lem:domain}
If  $\al,\beta\in \Dec$ with $\pab < 0$,
$0\ne x_\al\in \cL_\al$ and $0\ne y_\beta\in\cL_\beta$,
then $\ad(x_\al)^{-\pab}y_\beta \ne 0$.
\end{lemma}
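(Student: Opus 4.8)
The plan is to reduce the statement to a question about finite-dimensional representations of the $\spl_2$-triple $\cS_\al^\lm$ for a suitable $\lm$, and then to invoke (LT2)(ii) together with elementary $\spl_2$-module theory. First I would note that since $(\al,0)\in\supp_{Q\times\Lm}(\cL)$ by (LT2)(i) (as $\al\in\Deic$, or if $\al\in 2\Deic$ then $\al/2\in\Deic$ and one argues with $\al/2$ — more on this below), we may choose $\lm$ with $\dL{\al}{\lm}\ne 0$ and work with the $\spl_2$-triple $(e_\al^\lm,h_\al^\lm,f_\al^\lm)$, writing $\fs=\cS_\al^\lm$. By Remark~\ref{rem:locfin}, $\cL$ is a locally finite $\fs$-module, so the cyclic submodule $M:=U(\fs)y_\beta$ is finite-dimensional. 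The element $h_\al^\lm$ acts on $\cL_\gamma$ by the scalar $\langle\gamma,\al^\vee\rangle$ by \eqref{eq:LT2}; in particular $y_\beta$ is an $h_\al^\lm$-eigenvector of eigenvalue $\pab<0$.

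The key step is then purely $\spl_2$-theoretic. In any finite-dimensional $\spl_2$-module, write $e=e_\al^\lm$, $f=f_\al^\lm$, $h=h_\al^\lm$; if $v\ne 0$ satisfies $hv=-m\, v$ with $m>0$ an integer, then $e^{m}v\ne 0$. This is standard: decompose $M$ into irreducibles, note $v$ has a nonzero component in the weight-$(-m)$ space of each irreducible summand of highest weight $\ge m$, and on each such summand $e^{m}$ maps the weight-$(-m)$ space isomorphically onto the weight-$m$ space (the relevant product of structure constants being nonzero). Applying this with $v=y_\beta$ and $m=-\pab$ gives $\ad(e_\al^\lm)^{-\pab}y_\beta\ne 0$.

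It remains to pass from $e_\al^\lm$ to the arbitrary element $x_\al\in\cL_\al$. If $\al$ is indivisible, then for the chosen $\lm$ with $\dL{\al}{\lm}\ne 0$ we have $\dL{\al}{\lm}=\base e_\al^\lm$ one-dimensional, but $x_\al$ need not lie in this line — $\cL_\al=\bigoplus_\mu\dL{\al}{\mu}$ may be larger. So I would instead argue at the level of the full $Q$-grading: for \emph{every} $\mu$ with $\dL{\al}{\mu}\ne 0$, pick $0\ne e\in\dL{\al}{\mu}$; by (LT2)(ii) this spans $\dL{\al}{\mu}$ and sits in an $\spl_2$-triple with $h=h_\al^\mu$ acting as $\langle\cdot,\al^\vee\rangle$ on each $\cL_\gamma$, so the argument of the previous paragraph shows $\ad(e)^{-\pab}y_\beta\ne 0$ for each such $e$, and in particular the weight calculation $\langle\beta+k\al,\al^\vee\rangle=\pab+2k$ is the same for all $\mu$. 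Now write $x_\al=\sum_\mu e_\mu$ with $e_\mu\in\dL{\al}{\mu}$, not all zero; then $\ad(x_\al)^{-\pab}y_\beta=\sum_\mu \ad(e_\mu)^{-\pab}y_\beta + (\text{mixed terms})$, but each $\ad(e_\mu)^{-\pab}y_\beta$ lies in $\dL{\beta-\pab\cdot\al}{\beta\text{-degree}+(-\pab)\mu}$ — distinct $\Lm$-degrees for distinct $\mu$ — while the mixed terms lie in yet other $\Lm$-degrees. Hence no cancellation is possible across different $\mu$, and since at least one $\ad(e_\mu)^{-\pab}y_\beta\ne 0$, we get $\ad(x_\al)^{-\pab}y_\beta\ne 0$. (The same device handles the divisible case $\al\in 2\Deic$: replace $\al^\vee$ by $(\al/2)^\vee$-considerations or observe that $\langle\beta,\al^\vee\rangle<0$ forces $\langle\beta,(\al/2)^\vee\rangle<0$ and run the argument with the $\spl_2$-triples attached to $\al/2$, noting $\cL_\al\subseteq[\cL_{\al/2},\cL_{\al/2}]$ is not needed — we only need that $x_\al$ acts, and the $h_{\al/2}^\mu$-eigenvalue of $y_\beta$ is negative.)

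The main obstacle I anticipate is precisely this last bookkeeping: the grading element $x_\al$ is not homogeneous in $\Lm$, so one cannot directly put it in an $\spl_2$-triple, and one must extract the conclusion from the homogeneous pieces. The separation-by-$\Lm$-degree argument above is what makes this work, and it is the only place where the torus structure (rather than bare $\spl_2$-theory) is essential. A secondary subtlety is the divisible-root case, where $\dL{\al}{\lm}$ for $\al\in 2\Deic$ is still one-dimensional by (LT2)(ii) but the relevant $\spl_2$-triple is the one attached to $\al$ itself, so in fact no special treatment is needed — one just needs $(\al,\mu)\in\supp_{Q\times\Lm}(\cL)$ for the $\mu$'s occurring in $x_\al$, which holds by construction. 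I would double-check that \eqref{eq:LT2} indeed gives the $h_\al^\mu$-action on all of $\cL_\gamma$ (it does, for $\gamma\in Q$), so the weight of $y_\beta$ is unambiguously $\pab$.
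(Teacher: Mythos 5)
Your core strategy---local finiteness of $\cL$ over the relevant $\spl_2$-triple, plus the fact that a nonzero vector of negative $h$-weight $-m$ in a finite dimensional $\spl_2$-module is not annihilated by $e^{m}$---is exactly the engine of the paper's proof, and your observation that the divisible-root case needs no special treatment (since (LT2)(ii) applies to any $\al\in\Dec$ with $\dL{\al}{\mu}\ne 0$) is also correct. The gap is in the step that passes from a $\Lm$-homogeneous element of $\cL_\al$ to a general $x_\al$. You claim that the ``mixed terms'' in the expansion of $\ad(x_\al)^{-\pab}y_\beta$ lie in $\Lm$-degrees different from those of the pure terms $\ad(e_\mu)^{-\pab}y_\beta$, so that no cancellation can occur. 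That is false in general: if $-\pab=2$ and $x_\al$ has nonzero components in degrees $\mu_1,\mu_2,\mu_3$ with $\mu_1+\mu_3=2\mu_2$, then $\ad(e_{\mu_1})\ad(e_{\mu_3})y_\beta+\ad(e_{\mu_3})\ad(e_{\mu_1})y_\beta$ has the same $\Lm$-degree as the pure term $\ad(e_{\mu_2})^{2}y_\beta$, and nothing in your argument excludes their sum vanishing. (A further unaddressed point is that $y_\beta$ need not be $\Lm$-homogeneous either; your bookkeeping quietly assumes it is by referring to ``the $\beta$-degree''.)

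The repair is precisely the device the paper uses: give $\Lm$ a total order compatible with addition (lexicographic with respect to a $\bbZ$-basis) and examine only the component of $\ad(x_\al)^{-\pab}y_\beta$ of highest $\Lm$-degree. Writing $\mu_{\max}$ and $\nu_{\max}$ for the top degrees occurring in $x_\al$ and $y_\beta$, that component is the single pure term $\ad(e_{\mu_{\max}})^{-\pab}y_\beta^{\nu_{\max}}$, since every other term in the expansion has strictly smaller degree (the order respects addition). One is thereby reduced at the outset to the case where both $x_\al$ and $y_\beta$ are homogeneous, hence scalar multiples of $e_\al^{\lm}$ and $e_\beta^{\mu}$ by (LT2)(ii), and your $\spl_2$ argument---which is the paper's---finishes the proof. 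So the proposal is right in spirit but needs this extremal-degree reduction to close the cancellation loophole.
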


\begin{proof} Because of our assumptions on $\Lm$, we know
that we can give $\Lm$  a linear order (for example
the lexicographic order relative to some $\bbZ$-basis of $\Lm$).  Given nonzero $x\in \cL$,
this order on $\Lm$ allows us to speak of the nonzero component of \emph{highest degree} of $x$.

Suppose for contradiction that $\ad(x_\al)^{-\pab}y_\beta = 0$. Then
replacing $x_\al$ and $y_\beta$ by their nonzero components of highest
degree in the $\Lm$-grading,
we can assume that $x_\al \in \cL_\al^\lm$ and $y_\beta \in \cL_\beta^\mu$,
where $\lm, \mu\in \Lm$.
Thus, since the spaces $\cL_\al^\lm$ and $\cL_\beta^\mu$ are $1$-dimensional, we have
$\ad(e_\al^\lm)^{-\pab} e_\beta^\mu = 0$.
But,  by Remark \ref{rem:locfin},
$e_\beta^\mu$ lies in a finite dimensional $\cS_\al^\lm$-submodule of $\cL$. Further,
by \eqref{eq:LT2},
$e_\beta^\mu$ is an eigenvector for $\ad(h_\al^\lm)$ with eigenvalue $\pab<0$.
Therefore from the classification of finite dimensional
irreducible $\cS_\al^\lm$-modules, we have $\ad(e_\al^\lm)^{-\pab} e_\beta^\mu \ne 0$.
\end{proof}

The second lemma is an analogue for Lie tori of the well-known fact that
any invertible element in an associative $\Lm$-torus is homogeneous.

\begin{lemma} \label{lem:inverse}
Suppose $[x,y]\in \cL_0^0$, where
$0\ne x\in \cL_\al$, $0\ne y\in \cL_{-\al}$ and $\al\in \Dec$.
Then $x\in \cL_\al^\lm$ and $y\in \cL_{-\al}^{-\lm}$ for some $\lm\in\Lm$.
\end{lemma}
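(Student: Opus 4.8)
The plan is to argue that the assumption $[x,y]\in\dL{0}{0}$ forces both $x$ and $y$ to be homogeneous in the external grading, by isolating the top and bottom $\Lm$-degree components and showing they must pair up. First I would write $x = \sum_{\lm} x^\lm$ and $y = \sum_{\mu} y^\mu$ with $x^\lm\in\dL{\al}{\lm}$ and $y^\mu\in\dL{-\al}{\mu}$, taking the sums over the (finite) supports, and note that $[x,y] = \sum_{\lm,\mu}[x^\lm,y^\mu]$ with $[x^\lm,y^\mu]\in\dL{0}{\lm+\mu}$. Since $[x,y]\in\dL{0}{0}$, for each fixed $\nu\ne 0$ we get $\sum_{\lm+\mu=\nu}[x^\lm,y^\mu]=0$.

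The key step is to extract a contradiction unless $x$ and $y$ are each supported at a single degree. Fix a linear order on $\Lm$ (as in the proof of Lemma \ref{lem:domain}), let $\lm_0$ be the highest degree occurring in the support of $x$ and $\mu_0$ the highest degree in the support of $y$. If $x$ is not homogeneous, choose also the second-highest degree $\lm_1 < \lm_0$ in $\supp(x)$; I claim that the component of $[x,y]$ in degree $\lm_0+\mu_0$ equals $[x^{\lm_0},y^{\mu_0}]$, because any other pair $(\lm,\mu)$ with $\lm+\mu=\lm_0+\mu_0$ and $\lm\le\lm_0$, $\mu\le\mu_0$ must have $\lm=\lm_0,\mu=\mu_0$. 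Hence $[x^{\lm_0},y^{\mu_0}]=0$ unless $\lm_0+\mu_0=0$; and similarly, using lowest degrees, the lowest-degree pair must also satisfy $\lm_{\min}+\mu_{\min}=0$. The main obstacle is to show that in fact all the mixed brackets $[x^\lm,y^\mu]$ with $\lm+\mu\ne 0$ vanish, and that this is incompatible with $x$ or $y$ having more than one homogeneous component: for this I would combine the vanishing of the extreme brackets with Lemma \ref{lem:domain}. Indeed, $[x^{\lm_0},y^{\mu_0}]$ spans $[\dL{\al}{\lm_0},\dL{-\al}{\mu_0}]$, and since $(e_\al^{\lm_0},h_\al^{\lm_0},f_\al^{\lm_0})$ is an $\spl_2$-triple, $[e_\al^{\lm_0},f_\al^{\mu_0}]$ cannot vanish if $e_\al^{\lm_0}$ and $f_\al^{\mu_0}$ are both nonzero unless $\mu_0=-\lm_0$; here Lemma \ref{lem:domain} with $\beta=-\al$ (or a direct $\spl_2$ computation, noting $\pab=-2$) shows $\ad(e_\al^{\lm_0})^2 f_\al^{\mu_0}\ne 0$, which already forces $[e_\al^{\lm_0},f_\al^{\mu_0}]\ne 0$. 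So $\mu_0 = -\lm_0$.

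Having pinned down $\mu_0=-\lm_0$, I would then apply the same reasoning one step down: look at the degree $\lm_1 + \mu_0 = \lm_1 - \lm_0 < 0$ (if $x$ has a second component) and at $\lm_0 + \mu_1$ where $\mu_1$ is the second-highest degree of $y$; the component of $[x,y]$ in the maximal such off-diagonal degree is again a single bracket $[x^{\lm_1},y^{\mu_0}]$ or $[x^{\lm_0},y^{\mu_1}]$, which must vanish, contradicting Lemma \ref{lem:domain} applied with $\al,\beta=-\al$ since $e_\al^{\lm_1}\ne 0$ and $f_\al^{\mu_0}\ne 0$. This forces $\supp(x)=\{\lm_0\}$ and $\supp(y)=\{\mu_0\}=\{-\lm_0\}$, so $x\in\dL{\al}{\lm_0}$ and $y\in\dL{-\al}{-\lm_0}$ with $\lm=\lm_0$, as desired. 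I expect the bookkeeping of "which degree is maximal among off-diagonal sums" to be the fiddly part; the clean way to organize it is to first reduce $y$ to be homogeneous by the top/bottom argument on $y$ alone (using that $[x,y]\in\dL{0}{0}$ and the vanishing of extreme brackets), and then, once $y=y^\mu$ is homogeneous and nonzero, observe that $[x^\lm,y^\mu]\in\dL{0}{\lm+\mu}$ are the homogeneous components of $[x,y]\in\dL{0}{0}$, so $[x^\lm,y^\mu]=0$ for all $\lm\ne-\mu$; Lemma \ref{lem:domain} (with $-\al$ in place of $\beta$) then gives $x^\lm=0$ for $\lm\ne-\mu$, completing the proof.
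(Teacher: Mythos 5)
Your final organization---the highest and lowest $\Lm$-degrees of $x$ and of $y$ must pair to zero because the extreme brackets $[x^{\lm_0},y^{\mu_0}]$ and $[x^{\lm_{\min}},y^{\mu_{\min}}]$ are nonzero by Lemma \ref{lem:domain} with $\beta=-\al$, whence $\mu_0=-\lm_0\le-\lm_{\min}=\mu_{\min}\le\mu_0$ forces homogeneity---is exactly the paper's proof. (Your intermediate ``step down'' sketch would need extra care when $\lm_1+\mu_0=\lm_0+\mu_1$, since the top off-diagonal component is then a sum of two brackets rather than a single one, but the clean version you settle on sidesteps this entirely.)
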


\begin{proof}  We order $\Lm$ as in the previous proof.
Let $x_\al^{\mu(x)}\in \cL_\al^{\mu(x)}$ be the nonzero $\Lm$-homogeneous
component of $x$ of highest degree $\mu(x)$, and
let $y_{-\al}^{\mu(y)} \in \cL_{-\al}^{\mu(y)}$
be the nonzero $\Lm$-homogeneous
component of $y$ of highest degree $\mu(y)$.
Then, $[x,y]-[x_\al^{\mu(x)}, y_{-\al}^{\mu(y)}]$ is
the sum of $\Lm$-homogeneous terms of degree less than
$\mu(x) + \mu(y)$.  But $[x_\al^{\mu(x)}, y_{-\al}^{\mu(y)}] \ne 0$
by Lemma \ref{lem:domain} with $\beta = -\alpha$.
So $\mu(x) = -\mu(y)$.  Similarly
if we use lowest degrees $\nu(x)$ and $\nu(y)$, we get
$\nu(x) = -\nu(y)$.  So
$\mu(x) = -\mu(y) \le -\nu(y) = \nu(x)$, which implies
that $x = x_\al^{\mu(x)}$.  Similarly,~$y = y_{-\al}^{\mu(y)}$.
\end{proof}

\begin{lemma} \label{lem:generate}  Suppose  $\cL$ is a Lie torus of type $(\De,\Lm)$.
If $\set{\al_1,\dots,\al_\ell}$
is a base for the root system $\De$, then the algebra $\cL$ is generated by
$\bigcup_{i=1}^\ell\left(\cL_{\al_i}\cup \cL_{-\al_i}\right)$.
\end{lemma}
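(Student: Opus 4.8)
The plan is to show that the subalgebra $\cL'$ generated by $\bigcup_{i=1}^\ell\left(\cL_{\al_i}\cup \cL_{-\al_i}\right)$ contains $\cL_\al$ for every $\al\in\Dec$, after which (LT3) forces $\cL' = \cL$. The natural induction is on the height of $\al$ with respect to the chosen base, but one must be careful: unlike in the classical split simple case, the root spaces here are not one-dimensional, so we cannot simply quote the usual generation statement. Instead I would argue root space by root space using Lemma~\ref{lem:domain}.

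First I would record the base case: the positive simple root spaces $\cL_{\al_i}$ and the negative ones $\cL_{-\al_i}$ lie in $\cL'$ by definition. For the inductive step, suppose $\al\in\Dec$ is positive of height $\ge 2$ (the negative case is symmetric). Then there is a simple root $\al_i$ with $\langle\al,\al_i^\vee\rangle > 0$, and writing $\beta = \al - \al_i$ one checks that $\beta\in\Dec$ is again a positive root, of smaller height, and that $\langle \beta, \al_i^\vee\rangle = \langle\al,\al_i^\vee\rangle - 2 < 0$. (A small wrinkle: in the non-reduced $\mathrm{BC}$ case one should make sure $\beta$ is still a root; here the standard fact that subtracting a simple root from a non-simple positive root of an irreducible system gives a positive root still applies, and when $\al$ and $2\al$ are both roots one can reach them via an indivisible companion — I would spell this out in a sentence.) By the inductive hypothesis $\cL_\beta \subseteq \cL'$, and $\cL_{\al_i} \subseteq \cL'$, so $[\cL_{\al_i},\cL_\beta]\subseteq \cL'$; since the bracket lands in $\cL_{\al_i+\beta} = \cL_\al$, it remains to see that $[\cL_{\al_i},\cL_\beta] = \cL_\al$, not merely a subspace.

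The key point — and the step I expect to be the main obstacle — is exactly this surjectivity: every element of $\cL_\al$ is a sum of brackets $[x_{\al_i}, y_\beta]$. Here is where Lemma~\ref{lem:domain} does the work. Fix $0\ne z\in\cL_\al$; I want to produce it inside $[\cL_{\al_i},\cL_\beta]$. The cleanest route is via an $\spl_2$-argument: choose a nonzero $\Lm$-homogeneous $e_{\al_i}^\mu$ spanning some line in $\cL_{\al_i}$, form the associated triple $(e_{\al_i}^\mu, h_{\al_i}^\mu, f_{\al_i}^\mu)$, and use that $\cL$ is a locally finite $\cS_{\al_i}^\mu$-module (Remark~\ref{rem:locfin}). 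Writing $n = -\langle\beta,\al_i^\vee\rangle \ge 1$, Lemma~\ref{lem:domain} says $\ad(e_{\al_i}^\mu)^{n}$ is injective on each one-dimensional $\cL_\beta^\nu$, hence $\ad(e_{\al_i}^\mu)$ itself maps $\cL_\beta^\nu$ isomorphically onto $\cL_{\al_i+\beta}^{\mu+\nu} = \cL_\al^{\mu+\nu}$ whenever $n\ge 1$ — because in a finite-dimensional $\spl_2$-module the raising operator restricted to a weight space of strictly negative weight is injective. Since this holds for a suitable $\mu$ (and we may vary $\mu$ over the $\Lm$-support of $\cL_{\al_i}$, whose homogeneous spaces together span $\cL_{\al_i}$) and for every $\nu$ with $\cL_\beta^\nu \ne 0$, and since $\supp_\Lm(\cL_\al)\subseteq \supp_\Lm(\cL_{\al_i}) + \supp_\Lm(\cL_\beta)$ by the same bracket relation read the other way, we conclude $\cL_\al = [\cL_{\al_i},\cL_\beta]\subseteq\cL'$.

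To finish, the induction exhausts all of $\Dec$, so $\cL_\al\subseteq\cL'$ for every $\al\in\Dec$; then $\cL' = \cL$ by (LT3). I would present the reduction to the $\spl_2$-module statement as the heart of the proof and treat the root-combinatorics (existence of $\al_i$ with $\langle\al,\al_i^\vee\rangle>0$, and $\al-\al_i\in\Dec$) as standard, citing \cite[chap.~VI]{Bo}, with only the brief remark needed for the $\mathrm{BC}$ case. The one genuine subtlety to flag is making sure the argument produces \emph{all} $\Lm$-homogeneous components of $\cL_\al$, which is why it is cleanest to work one external-grading degree at a time and lean on the one-dimensionality of the $\cL_\al^\lm$ for $\al\in\Dec$ guaranteed by (LT2)(ii).
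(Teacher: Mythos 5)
Your overall plan (induct on height, work one bigraded component $\cL_\al^\lm$ at a time, and use $\spl_2$-theory for surjectivity of the bracket) is workable, but the inductive step as written has a genuine gap. You assert that choosing a simple root $\al_i$ with $\langle\al,\al_i^\vee\rangle>0$ and setting $\beta=\al-\al_i$ gives $\langle\beta,\al_i^\vee\rangle=\langle\al,\al_i^\vee\rangle-2<0$. This holds only when $\langle\al,\al_i^\vee\rangle=1$, and in the non-simply-laced types one cannot always arrange that. For instance, in type $\type{B}{2}$ with base $\set{\ep_1-\ep_2,\ \ep_2}$ the highest root $\al=\ep_1+\ep_2$ pairs to $0$ with $(\ep_1-\ep_2)^\vee$ and to $2$ with $\ep_2^\vee$, so the only admissible choice yields $\beta=\ep_1$ with $\langle\beta,\al_i^\vee\rangle=0$; in $\type{G}{2}$ the root $\al_1+3\al_2$ forces $\beta=\al_1+2\al_2$ with $\langle\beta,\al_2^\vee\rangle=1>0$; and in type $\type{BC}{\ell}$ the extra-long root $2\al$ gives $\beta=\al$ with $\langle\al,\al^\vee\rangle=2$. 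In all of these cases Lemma~\ref{lem:domain} does not apply (it requires $\pab<0$), and your justification that the raising operator is injective on a weight space "of strictly negative weight" is vacuous because the weight is $\ge 0$. The step can be repaired, but it needs the stronger $\spl_2$ fact that in a finite-dimensional module the raising operator maps the weight-$k$ space \emph{onto} the weight-$(k+2)$ space for every $k\ge -1$; one should first apply $\ad(f_{\al_i}^0)$ (injective on positive weight spaces) to see that $\cL_\beta^\sg\ne 0$ for each $\sg\in\supp_\Lm(\cL_\al)$, and then apply this surjectivity to land back in $\cL_\al^\sg$. Your one-line claim that $\supp_\Lm(\cL_\al)\subseteq\supp_\Lm(\cL_{\al_i})+\supp_\Lm(\cL_\beta)$ "by the same bracket relation read the other way" is exactly this downward step and also needs to be spelled out.

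For comparison, the paper sidesteps the height induction and its case analysis: it sets $E^\times=\set{\al\in\Dec\suchthat\cL_\al\subseteq\cM}$ for $\cM$ the generated subalgebra, shows $E^\times$ is Weyl-group stable using the identity $\ad(\cL_\al^\lm)^{-\pab}\cL_\beta^\mu=\cL_{w_\al(\beta)}^{\mu-\pab\lm}$ from \cite{ABFP2} (the equality form of Lemma~\ref{lem:domain}, which only ever needs $\pab<0$), and concludes $\Dei\subseteq E^\times$ since every indivisible root is Weyl-conjugate to a simple one. Only the extra-long roots of type $\type{BC}{}$ then need a separate $\cS_\al^0$-representation argument --- which is precisely the residual case your induction must also confront, and the one your sketch treats most lightly.
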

\begin{proof} Let $\cM$ be the subalgebra of $\cL$ that is generated by the indicated set,
and let $E^\times  = \set{\al\in\Dec \suchthat \cL_\al \subseteq \cM}$.  In view of (LT3),
it suffices to show that $E^\times = \Dec$.  Now it follows from
\cite[(4)]{ABFP2} that $E^\times$ is stable under the action of the Weyl group
of $\De$.  Hence, $\Dei \subseteq E^\times$, and we are done if $\De$
is reduced.  Assume now that $\De$ is not reduced, and let $\al$ be a root of
smallest length in $\Dec$.  It remains to show that  $2\al \in E^\times$.
To verify this, it is enough
to show that $e_{2\al}^\sg \in \ad(e_\al^0) \cL_\al$
for all $\sg\in \Lm$.  This is an easy exercise using representations
of the algebra  $\cS_\al^0$.
We leave the details to the reader.
\end{proof}

\section{Centreless Lie tori}
\label{sec:cssLT}
In this section,   
\emph{we assume that $\cL$ is a centreless Lie torus of type
$(\De,\Lm)$} and 
we recall the basic facts  that we will need about $\cL$.  All of these facts were
announced  by Neher in \cite{Neh1} or \cite[\S 5.8(c)]{Neh3}.  For the convenience of the reader,
we  provide a proof or a reference for a proof in each case.

Set  
\[\fg = \cL^0 \andd \fh = \cL^0_0.\]
Then, by \cite[Prop. 1.2.2]{ABFP2},
$\fg$ is a finite dimensional split simple Lie algebra
with splitting Cartan subalgebra $\fh$.
Moreover [ibid],
$\De$ can be  uniquely identified
(by means of a linear  isomorphism of $\spann_\base(\De)$ onto $\fh^*$)
as a root system in $\fh^*$ in such a way that
\begin{equation*}
\Dei = \De_\base(\fg,\fh)
\end{equation*}
and $[e_\al^0,f_\al^0] =  \al^\vee$ for $\al\in \Deic$.
\emph{We will subsequently always make this identification}. In that case we have [ibid]
\begin{equation*}
[e_\al^\lm,f_\al^\lm] =  \al^\vee \quad
\text{for $(\al,\lm)\in \supp_{Q\times \Lm}(\cL)$, $\al\in\Dec$}
\end{equation*}
and
\begin{equation}
\label{eq:Lroot}
\cL_\al = \set{x\in \cL
\suchthat [h,x] = \al(h) x \text{ for } h\in \fh}\quad \text{for $\al\in Q$.}
\end{equation}
(Here $\al^\vee\in (\fh^*)^* = \fh$.)

Note that \eqref{eq:Lroot} tells us that $\fh$ is a split toral
$\base$-subalgebra of $\cL$ and that the  root grading
of $\cL$ is the root-space decomposition of $\cL$ relative
to  $\fh$.

Recall  that an algebra $\cA$  is said to be \emph{prime} if the product
of any two nonzero ideals of $\cA$ is nonzero.

\begin{proposition}
\label{prop:prime}
 $\cL$ is prime.
\end{proposition}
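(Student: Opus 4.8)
The plan is to argue by contradiction: suppose $I$ and $J$ are nonzero ideals of $\cL$ with $[I,J]=0$, and derive a contradiction. First I would record that every ideal of $\cL$ is graded by the root lattice $Q$. Indeed, $\fh=\dL{0}{0}$ is a split toral $\base$-subalgebra of $\cL$ whose root-space decomposition is the root grading of $\cL$ (by \eqref{eq:Lroot}), so any $\ad(\fh)$-stable subspace --- in particular any ideal $K$ --- decomposes as $K=\bigoplus_{\al\in\De}(K\cap\cL_\al)$; this is the standard weight-space argument, valid because $\characteristic\base=0$. Write $K_\al:=K\cap\cL_\al$.

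The crux is the following claim: \emph{every nonzero ideal $K$ of $\cL$ satisfies $K_\al\neq 0$ for every $\al\in\Deic$}. Granting this, the proposition is immediate: pick any $\al\in\Deic$ (possible since $\De$ is irreducible) and choose $0\neq x\in I_\al$, $0\neq y\in J_{-\al}$; since $\langle-\al,\al^\vee\rangle=-2<0$, Lemma~\ref{lem:domain} (applied with $\beta=-\al$) gives $\ad(x)^2y\neq 0$, so in particular $[x,y]=\ad(x)y\neq 0$ --- contradicting $[x,y]\in[I,J]=0$.

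To prove the claim I would proceed in two steps. \emph{Step 1: $K\not\subseteq\cL_0$.} If $K\subseteq\cL_0$, then for each $\al\in\Dec$ we have $[K,\cL_\al]\subseteq K\cap\cL_\al=0$, so the centralizer of $K$ in $\cL$ (a subalgebra, by the Jacobi identity) contains $\cL_\al$ for all $\al\in\Dec$ and hence, by (LT3), equals $\cL$; thus $K$ lies in the centre of $\cL$, which is $0$, contradicting $K\neq 0$. So $K_{\gamma_0}\neq 0$ for some $\gamma_0\in\Dec$. \emph{Step 2 (propagation): if $K_\gamma\neq 0$ for some $\gamma\in\Dec$, then $K_\delta\neq 0$ for every $\delta\in\Deic$ with $\langle\delta,\gamma^\vee\rangle\neq 0$.} To see this, fix a linear order on $\Lm$ (as in the proofs of Lemmas~\ref{lem:domain} and~\ref{lem:inverse}), take $0\neq x\in K_\gamma$, and let $\mu$ be the highest $\Lm$-degree occurring in $x$, so the $\dL{\gamma}{\mu}$-component of $x$ is $c\,e_\gamma^\mu$ with $c\neq 0$. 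Then $z:=[x,f_\gamma^\mu]\in K\cap\cL_0$ has $\Lm$-degree-$0$ component $c\,[e_\gamma^\mu,f_\gamma^\mu]=c\,\gamma^\vee\neq 0$, while all its other $\Lm$-homogeneous components have degree $<0$; consequently $[z,e_\delta^0]\in K_\delta$ has $\Lm$-degree-$0$ component $c\,\langle\delta,\gamma^\vee\rangle\,e_\delta^0$, which is nonzero, so $[z,e_\delta^0]\neq 0$. Now, starting from $\gamma_0$, Step~2 reaches some root in $\Deic$ (namely $\gamma_0$ if it is indivisible, and otherwise $\delta=\tfrac12\gamma_0\in\Deic$, since $\langle\tfrac12\gamma_0,\gamma_0^\vee\rangle=1$); and since $\Dei$ is an irreducible root system, any two roots of $\Deic$ are linked by a chain of roots of $\Deic$ each non-orthogonal to the next --- otherwise $\Deic$ would split into two nonempty mutually orthogonal subsets, contradicting irreducibility. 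Iterating Step~2 along such a chain gives $K_\delta\neq 0$ for all $\delta\in\Deic$, which proves the claim.

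The delicate point, and the one requiring genuine care, is Step~2: an ideal of $\cL$ need not be homogeneous for the $\Lm$-grading, so one cannot simply replace $x$ by an $\Lm$-homogeneous element of $K$. Passing to the component of highest $\Lm$-degree --- exactly the device used for Lemmas~\ref{lem:domain} and~\ref{lem:inverse} --- is what guarantees that the brackets $[x,f_\gamma^\mu]$ and $[z,e_\delta^0]$ have a nonvanishing top-degree term and are therefore nonzero. It is also the reason the propagation is run through $\Deic$ rather than all of $\Dec$, so that the elements $e_\delta^0$ are available via (LT2)(i). Everything else reduces to the structural identities recalled in Section~\ref{sec:cssLT} and elementary $\spl_2$-theory.
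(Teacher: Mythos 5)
Your proof is correct, and its overall skeleton matches the paper's: both first show that every ideal is $Q$-graded (via \eqref{eq:Lroot}), then show that a nonzero ideal has root-support large enough to contain some $\al$ together with $-\al$, and both derive the final contradiction from Lemma~\ref{lem:domain} applied with $\beta=-\al$. Where you diverge is in the propagation step. The paper sets $\Dec(\cI)=\set{\al\in\Dec \suchthat \cI_\al\ne 0}$ and reuses Lemma~\ref{lem:domain} itself --- taking $y_\beta$ in the ideal to show $\Dec(\cI)$ is stable under the Weyl group, and taking $x_\al$ in the ideal to show $\Dec(\cI)$ and its complement are orthogonal --- so that irreducibility forces $\Dec(\cI)=\Dec$. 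You instead propagate through $\cL_0$: you extract the top $\Lm$-degree component $c\,e_\gamma^\mu$ of a nonzero element of $\cI_\gamma$, bracket with $f_\gamma^\mu$ to get an element of $\cI\cap\cL_0$ with degree-zero component $c\,\gamma^\vee$, and then bracket with $e_\delta^0$ to land nontrivially in $\cI_\delta$ whenever $\langle\delta,\gamma^\vee\rangle\ne 0$, finishing with connectivity of the non-orthogonality graph on $\Deic$ (which is the same use of irreducibility in different clothing). Your route re-derives the highest-degree device underlying Lemma~\ref{lem:domain} rather than quoting the lemma a second time, and it establishes support only on $\Deic$ rather than on all of $\Dec$ --- which, as you note, is all the contradiction requires, at the cost of the small extra step handling a divisible starting root $\gamma_0=2\delta$. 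Both arguments are sound; the paper's is marginally more economical because one lemma does double duty for propagation and for the final nonvanishing, while yours makes explicit use of the elements $e_\delta^0$ supplied by (LT2)(i).
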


\begin{proof}  The main tool in the argument is Lemma \ref{lem:domain}, which tells us that
if  $\al,\beta\in \Dec$ with $\pab < 0$,
$0\ne x_\al\in\cL_\al$ and $0\ne y_\beta \in \cL_\beta$,
then
\begin{equation}
\label{eq:nonzeroprod}
0\ne \ad(x_\al)^{-\pab}y_\beta \in \cL_{w_\al}(\beta),
\end{equation}
where $w_\al$ is the reflection along $\al$ in the Weyl group $W$ of $\De$.

Suppose now that $\cI$ is a nonzero ideal of $\cL$.
By \eqref{eq:Lroot},
$\cI$ is $Q$-graded; that is $\cI = \bigoplus_{\al\in\De} \cI_\al$, where $\cI_\al = \cI \cap \cL_\al$.
Let  $\Dec(\cI) = \set{\al\in \Dec \suchthat \cI_\al \ne 0}$. 
We will see that $\Dec(\cI) = \Dec$.

Note first that
$\Dec(\cI) \ne \emptyset$. Indeed otherwise we have $\cI\subseteq \cL_0$, which implies $[\cI,\cL_\al] = 0$
for $\al\in \Dec$ and hence $[\cI,\cL] = 0$ by (LT3), contradicting our assumption that
$\cL$ is centreless.

We now claim that $W \Dec(\cI) \subseteq \Dec(\cI)$.  To see this,
it is enough to show that $w_\al(\beta)\in \Dec(\cI)$  for $\al \in \Dec$ and $\beta\in \Dec(\cI)$.
For this we can assume that $\pab < 0$ in which case our claim follows taking
$y_\beta\in \cI_\beta$ in \eqref{eq:nonzeroprod}.  Note that in particular,
if $\beta\in \Dec(\cI)$, we have $-\beta = w_\beta(\beta) \in \Dec(\cI)$.

Next we claim that $\Dec(\cI)$ and $\Dec\setminus \Dec(\cI)$ are orthogonal.  Indeed, if not, we can
choose $\al\in\Dec(\cI)$ and $\beta\in\Dec\setminus \Dec(\cI)$ with $\pab \ne 0$. Replacing,
$\al$ by $-\al$ if necessary, we can assume that $\pab <  0$.  But then taking
$x_\al \in \cI_\al$ in \eqref{eq:nonzeroprod}, we see that $w_\al(\beta)\in \Dec(\cI)$
and hence (by the previous claim) $\beta\in \Dec(\cI)$.  This contradiction proves the claim.
It then follows from the irreducibility of $\De$ that $\Dec(\cI) = \Dec$.

To prove the proposition, suppose for contradiction that $\cI$ and $\cJ$
are nonzero ideals of $\cL$ with
$[\cI,\cJ] \ne 0$.  Then $\Dec(\cI) = \Dec$ and $\Dec(\cJ) = \Dec$.  Hence, for
any $\al\in \Dec$, we have $\al\in \Dec(\cI)$ and
$-\al\in \Dec(\cJ)$.  So $\cI_\al \ne \zero$ and $\cJ_{-\al} \ne \zero$.
Since $[\cI_\al,\cJ_{-\al}]= 0$, this contradicts \eqref{eq:nonzeroprod} (with $\beta = -\al$).
\end{proof}

Let  $C = \Cd_\base(\cL)$. Then
$C = \bigoplus_{\lm\in\Lm} C^\lm$
is a $\Lm$-graded commutative associative algebra,  where
$C^\lm :=
\set{c\in C \suchthat c(\cL^\mu)\subseteq \cL^{\mu+\lm} \text{ for } \mu\in\Lm}$
\cite[Lemma 3.11(1)]{BN}.

Set
\[\Gm = \Gm(\cL) := \supp_\Lm(C).\]
Then  $\Gm$ is a subgroup of $\Lm$ [ibid], and
\begin{equation}
\label{eq:centstructure}
C \simeq \base[\Gm],
\end{equation}
as graded algebras, where $\base[\Gm]$ is the group
algebra of $\Gm$ with its natural $\Lm$-grading  \cite[Prop.~3.13(ii)]{BN}.

Recall (see Section \myref{sec:Lietori}) that $\Lm$ is called the external-grading
group of $\cL$.  Note also that $\cL$ it is  naturally graded by the quotient group
$\Lm/\Gm$, and we call the group $\Lm/\Gm$ the \emph{quotient external-grading group} of
$\cL$.

The following proposition follows from  \cite[Lemma 1.3.7 and Prop. 1.4.1]{ABFP2}:

\begin{proposition}  Suppose that $\cL$ is fgc.  Then
\label{prop:fincondLT}
\begin{itemize}
\item[(i)]   $\cL^\lm$ is finite dimensional for
$\lm\in \Lm$.\footnote{Part (i) is true without the assumptions that $\cL$ is fgc and centreless \cite[Thm. 5]{Neh2},
but the proposition as stated is all that we need.}
\item[(ii)]   $\Lm/\Gm$ is finite.
\end{itemize}
\end{proposition}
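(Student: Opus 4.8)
The plan is to get (i) from a uniform dimension bound over $C$ and to get (ii) by coarsening the external grading, the only substantial point being to show that $\Lm/\Gm$ is torsion. For (i): since $C$ is $\Lm$-graded and $\cL$ is finitely generated over $C$, I may choose finitely many $\Lm$-homogeneous $C$-module generators $x_1,\dots,x_n$ of $\cL$ (replace an arbitrary generating set by the family of all its homogeneous components). Then $\cL^\lm=\sum_{j=1}^n C^{\lm-\deg x_j}x_j$, and by \eqref{eq:centstructure} every homogeneous component of $C\isom\base[\Gm]$ is at most $1$-dimensional over $\base$; hence $\dim_\base\cL^\lm\le n$ for every $\lm\in\Lm$, which proves (i), even with a bound independent of $\lm$.

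For (ii), I coarsen the $\Lm$-grading to the quotient external grading $\cL=\bigoplus_{\bar\lm\in\Lm/\Gm}\cL^{\bar\lm}$. Since $C$ is supported on $\Gm$, multiplication by $C$ respects this decomposition, so it is a direct sum decomposition of $\cL$ as a $C$-module. By \eqref{eq:centstructure}, $C\isom\base[\Gm]$ is a Laurent polynomial ring, hence Noetherian, and a finitely generated module over a Noetherian ring cannot contain an infinite direct sum of nonzero submodules; therefore $\supp_{\Lm/\Gm}(\cL)$ is finite. By (LT4), $\supp_\Lm(\cL)$ generates $\Lm$, so its image generates $\Lm/\Gm$; thus $\Lm/\Gm$ is a finitely generated abelian group, and it remains to show it is torsion.

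Suppose it is not. Then there is a surjective homomorphism $\Psi\colon\Lm\to\bbZ$ with $\Psi(\Gm)=0$, which induces a $\bbZ$-grading $\cL=\bigoplus_{m\in\bbZ}\cL[m]$ with $\cL[m]=\bigoplus_{\Psi(\lm)=m}\cL^\lm$; each $\cL[m]$ is again a $C$-submodule, so by the Noetherian argument above $\cL[m]=0$ for $|m|$ large. To contradict this I produce root vectors of arbitrarily large $\Psi$-value. By (LT3) and Lemma \ref{lem:generate}, $\cL$ is generated by the spaces $\cL_\al$ with $\al\in\Deic$, so $\supp_\Lm(\cL)$ lies in the submonoid generated by $\bigcup_{\al\in\Deic}\supp_\Lm(\cL_\al)$; since $\supp_\Lm(\cL)$ generates $\Lm$ and $\Psi$ is onto, there are $\al\in\Deic$ and $\lm^*$ with $\cL_\al^{\lm^*}\ne0$ and $d:=\Psi(\lm^*)>0$ (replace $\lm^*$ by $-\lm^*$ if needed, since each $\supp_\Lm(\cL_\al)$ is symmetric by Lemma \ref{lem:domain} and (LT2)). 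Now, using Lemma \ref{lem:domain} with the pair of roots $\al,-\al$ (for which $\langle-\al,\al^\vee\rangle=-2$), together with the symmetries $\cL_\al^\mu\ne0\Longleftrightarrow\cL_{-\al}^{-\mu}\ne0\Longleftrightarrow\cL_{-\al}^{\mu}\ne0$ (which likewise follow from Lemma \ref{lem:domain} and (LT2)), an easy induction on $m$ shows that $\ad(x)^2$, for a nonzero $x\in\cL_\al^{(m-1)\lm^*}$, sends a nonzero element of $\cL_{-\al}^{(2-m)\lm^*}$ to a nonzero element of $\cL_\al^{m\lm^*}$; hence $\cL_\al^{m\lm^*}\ne0$ for every $m\ge1$, and so $\cL[md]\ne0$ for every $m\ge1$, a contradiction. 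Therefore $\Lm/\Gm$ is torsion, and being finitely generated it is finite. (Alternatively, (ii) is \cite[Lemma 1.3.7 and Prop. 1.4.1]{ABFP2}.)

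The step I expect to be the main obstacle is exactly the torsion statement: finiteness of $\supp_{\Lm/\Gm}(\cL)$ and the generation property (LT4) are both insensitive to torsion, so one must bring in the internal $\spl_2$-structure of the Lie torus — concretely, the fact that a single nonzero root vector of external degree $\lm^*$ (with $\al$ indivisible) forces, via $\spl_2$-string arguments, nonzero root vectors in every external degree $m\lm^*$, $m\ge1$, so that the external support cannot be bounded along any $\bbQ$-line not contained in $\Gm\otimes_\bbZ\bbQ$.
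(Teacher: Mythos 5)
Your proof is correct, and it necessarily differs from the paper's, because the paper gives no argument at all for this proposition: it simply defers to \cite[Lemma 1.3.7 and Prop.\ 1.4.1]{ABFP2}. Your self-contained route checks out. For (i), passing to $\Lm$-homogeneous $C$-module generators and using that each homogeneous component of $C\isom\base[\Gm]$ is at most one-dimensional (so this step does use \eqref{eq:centstructure}, hence centrelessness) gives the uniform bound $\dim_\base \cL^\lm \le n$. For (ii), the Noetherian argument correctly forces any grading of $\cL$ by a quotient of $\Lm$ killing $\Gm$ to have finite support, and you rightly isolate the torsion statement as the only substantive point. Your $\spl_2$-string argument there is sound: for $\al\in\Deic$ the symmetry of $\supp_\Lm(\cL_\al)$ under $\lm\mapsto-\lm$ and under $\al\mapsto-\al$ follows from Lemma \ref{lem:domain} applied to the pair $(\al,-\al)$ together with (LT2), and the induction showing $\cL_\al^{m\lm^*}\ne 0$ needs the two base cases $m=0$ (from (LT2)(i), which is where indivisibility of $\al$ enters) and $m=1$, since the step for $m$ consumes both $m-1$ and $m-2$; read that way, the induction closes and contradicts the finiteness of the $\bbZ$-graded support. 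What your approach buys is a proof from first principles using only tools the paper has already set up --- Lemma \ref{lem:domain}, Lemma \ref{lem:generate} and \eqref{eq:centstructure} --- in the same elementary $\spl_2$-theoretic spirit as the paper's Lemmas \ref{lem:domain}--\ref{lem:generate}, at the cost of bookkeeping that the citation to \cite{ABFP2} (where essentially these facts are established) avoids.
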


\goodbreak

\section{The central closure of an fgc centreless Lie torus}
\label{sec:closureLT}
In this section \emph{we assume that $\cL$ is an fgc centreless Lie torus of type
$(\De,\Lm)$} and we discuss  the central closure of $\cL$.  We continue using the notation
$\fh = \cL^0_0$,
$C = \Cd_\base(\cL)$ and $\Gm = \Gm(\cL)$ introduced in Section \ref{sec:cssLT}.

Taking  into account Proposition \ref{prop:fincondLT}, we now
fix a list $\lm_1,\dots,\lm_m$ of representatives
of the cosets of $\Gm$ in $\Lm$, with $\lm_1 = 0$.
For $\al\in \De$ and $1\le i \le m$, we choose a (finite) $\base$-basis $B_\al^i$
for $\cL_\al^{\lm_i}$.  For $\al\in \De$ we let $B_\al = \cup_{i=1}^m B_\al^i$;
and  we let $B = \cup_{\al\in\De} B_\al$.  Note that
$B$ is  finite since $\De = \supp_Q(\cL)$ is finite.

\begin{proposition}\
\label{prop:CmodLT}
\begin{itemize}
\item[(i)] If $\al\in Q$,
$\cL_\al$ is a $C$-submodule of $\cL$ and
$B_\al$ is a  $\Lm$-homogeneous $C$-basis for
$\cL_\al$.  Hence $\cL_\al$  is a free $C$-module of finite rank.

\item[(ii)] $B$ is a  $Q\times \Lm$-homogeneous $C$-basis for $\cL$.
Hence $\cL$ is a free $C$-module of finite rank.
\end{itemize}
\end{proposition}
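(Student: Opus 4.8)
The plan is to establish part (i) first and then derive part (ii) as an easy consequence. For part (i), fix $\al \in Q$. The first thing to check is that $\cL_\al$ is a $C$-submodule of $\cL$: given $c \in C$ and $x \in \cL_\al$, we must see $c(x) \in \cL_\al$. This follows from \eqref{eq:Lroot} together with the centroid condition, since for $h \in \fh$ we have $[h, c(x)] = c([h,x]) = c(\al(h)x) = \al(h)c(x)$, so $c(x)$ again lies in the $\al$-root space. Next, because $C = \bigoplus_{\lm \in \Lm} C^\lm$ is $\Lm$-graded and each $C^\lm$ shifts the external degree by $\lm$, the submodule $\cL_\al = \bigoplus_i \cL_\al^{\lm_i + \Gm}$ decomposes compatibly, and one sees that $\cL_\al$ is generated over $C$ by $\bigcup_i B_\al^i = B_\al$: indeed any homogeneous element of $\cL_\al^{\lm_i + \gamma}$ with $\gamma \in \Gm$ is obtained by applying a suitable element of $C^\gamma$ to an element of $\cL_\al^{\lm_i}$, using the isomorphism \eqref{eq:centstructure} $C \simeq \base[\Gm]$ which shows $C^\gamma$ acts invertibly (it is one-dimensional, spanned by a unit of $C$) in carrying $\cL_\al^{\lm_i}$ onto $\cL_\al^{\lm_i + \gamma}$. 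The remaining point is $C$-linear independence of $B_\al$.

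For the linear independence, suppose $\sum_{b \in B_\al} c_b\, b = 0$ with $c_b \in C$, not all zero. After projecting onto a single external degree we may assume each $c_b$ is $\Lm$-homogeneous, say $c_b \in C^{\gamma}$ for a fixed $\gamma \in \Gm$ (the same $\gamma$, by comparing degrees, for all $b$ in a fixed $B_\al^i$); collecting by $i$ we get $\sum_i \sum_{b \in B_\al^i} c_b\, b = 0$ where each inner sum lies in $\cL_\al^{\lm_i + \gamma}$, and since the $\lm_i$ are in distinct cosets of $\Gm$ these summands lie in distinct external degrees, hence each inner sum vanishes separately. So it suffices to treat one $i$: we have $\sum_{b \in B_\al^i} c_b\, b = 0$ with all $c_b \in C^\gamma$. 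Since $C^\gamma$ is spanned by a single invertible element $c_\gamma$ of $C$ (write $c_b = s_b c_\gamma$ with $s_b \in \base$), applying $c_\gamma^{-1}$ gives $\sum_b s_b\, b = 0$ in $\cL_\al^{\lm_i}$; but $B_\al^i$ was chosen to be a $\base$-basis of $\cL_\al^{\lm_i}$, so all $s_b = 0$, hence all $c_b = 0$, a contradiction. This proves $B_\al$ is a $C$-basis of $\cL_\al$, and it is $\Lm$-homogeneous by construction; finiteness of the rank is immediate since $B_\al$ is finite.

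Part (ii) then follows formally: $\cL = \bigoplus_{\al \in \De} \cL_\al$ as a $Q$-graded space (this is (LT1), with $\supp_Q(\cL) = \De$), each $\cL_\al$ is a free $C$-module with basis $B_\al$ by part (i), and $\De$ is finite, so $\cL = \bigoplus_{\al \in \De} \cL_\al$ is free over $C$ with basis $B = \bigcup_{\al \in \De} B_\al$. Each element of $B$ is homogeneous for both the $Q$-grading and the $\Lm$-grading, hence $Q \times \Lm$-homogeneous, and $B$ is finite, giving the finite rank.

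The only genuinely delicate point is the linear independence argument, and specifically the use of the structure result $C \simeq \base[\Gm]$ (equation \eqref{eq:centstructure}): this is what guarantees that each graded piece $C^\gamma$ is one-dimensional and spanned by an invertible element, which is exactly what lets us reduce $C$-linear relations among the $b$'s to $\base$-linear relations within a single root-and-external-degree space. Everything else is bookkeeping with the compatible $Q$- and $\Lm$-gradings. I would be careful to cite \cite[Prop.~3.13(ii)]{BN} (via \eqref{eq:centstructure}) and the $\Lm$-grading of $C$ from \cite[Lemma 3.11(1)]{BN} at the appropriate spots, and to invoke \eqref{eq:Lroot} for the $C$-stability of root spaces.
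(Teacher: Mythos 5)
Your proof is correct and follows exactly the route the paper intends: $C$-stability of $\cL_\al$ from \eqref{eq:Lroot}, and the basis property from the graded isomorphism $C\simeq\base[\Gm]$ of \eqref{eq:centstructure}, which makes each $C^\gamma$ one-dimensional and spanned by a unit so that $C$-linear relations reduce to $\base$-linear relations inside the individual spaces $\cL_\al^{\lm_i}$. The paper's own proof is only a two-line sketch declaring these steps "easily checked directly," and your write-up supplies precisely the omitted verification.
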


\begin{proof} Since (ii) follows from (i), so we only need to check (i).
First, the fact that  $\cL_\al$ is a $C$-submodule of $\cL$
follows from \eqref{eq:Lroot}.   Also $B_\al$ is
$\Lm$-homogeneous by definition.   Finally,  the fact that $B_\al$
is a $C$-basis for $\cL_\al$ is easily checked directly using
\eqref{eq:centstructure}.
\end{proof}

The centroid $C$ of $\cL$ is an integral domain (for example
by \eqref{eq:centstructure}).  Let $\tC$ be the quotient
field of $C$, in which case $\tC$ is an extension field of $\base$.
Let
\[\tL := \tC \ot_C \cL.\]
Then $\tL$ is a algebra over $\tC$ which we call the \emph{central closure}
of $\cL$.

Now  $\cL$ is prime (by Proposition  \ref{prop:prime}), perfect and
fgc.  So
$\tL$ is a finite dimensional
central simple algebra over  $\tC$,
and the map $x \mapsto x\ot 1$ identifies
$\cL$ as a    $C$-subalgebra of  $\tL$.
(See for example \cite[\S 3]{ABP2.5}, which uses results from \cite[\S1]{EMO}.)

It follows from Proposition \ref{prop:CmodLT}(ii) that
$B$ is a $\tC$-basis of $\tL$ and hence
\begin{equation}
\label{eq:rankL}
\dim_\tC(\tL) = \rank_C(\cL).
\end{equation}

\begin{remark}
\label{rem:closureisom} If $\cL$ and $\cL'$ are fgc centreless Lie torus that
are isomorphic (as $\base$-algebras), it follows easily using
Remark \ref{rem:cinduced}(ii) that $\tL$ and $\tLp$
are isomorphic (as $\base$-algebras).
\end{remark}

Next  let 
\[\tfh = \tC\fh\]
in $\tC$.  It is clear that
$\tfh$ is a nonzero split toral $\tC$-subalgebra of
$\tL$, and hence $\tL$ is isotropic (see  Section \myref{sec:splittoral}).
We will show in Theorem \ref{thm:max} that $\tfh$ is a maximal split
toral $\tC$-subalgebra of~$\tL$.  

We first look at the root space  
decomposition of $\tC$ relative to $\tfh$.  For this,
let $\fh^* = \Hom_\base(\fh,\base)$ be the dual space of $\fh$ over $\base$ (as before),
and ${\tfh}^* = \Hom_\tC(\tfh,\tC)$ be the dual space of $\tfh$ over $\tC$.

\begin{proposition}\
\label{prop:rootspacec}
\begin{itemize}
\item[(i)]
$B_0^0$ is a $\base$-basis for $\fh = \cL_0^0$ and $B_0^0$ is a $\tC$-basis for
$\tfh$.  Hence $\dim_{\tC}(\tfh) = \dim_\base(\fh)$, and any
$\base$-basis for $\fh$ is a $\tC$-basis for $\tfh$.
\item[(ii)] There exists a unique
$\base$-linear map $\al\mapsto\tal$ of $\fh^*$ into $\tfh^*$ with
$\tal|_\fh = \al$ for $\al\in \fh^*$. Under this map, any
$\base$-basis for $\fh^*$ is sent to a $\tC$-basis for $\tfh^*$;
and we have
\begin{equation}
\label{eq:fhchar}
\fh = \set{\tildeh \in \tfh \suchthat \tal(\tildeh)\in \base \text{ for } \al\in \De}.
\end{equation}
\item[(iii)] Let $\tDe = \set{\tal \suchthat \al\in \De}$
and $\tQ =  \set{\tal \suchthat \al\in Q}$.
Then $\tDe$ is an irreducible finite root system over $\tC$ in $\tfh^*$ of the same type as
$\De$,\footnote{
In fact, one can check that $\tDe$ is isomorphic to the root system
obtained from $\De$ by base field extension from $\base$ to $\tC$ (as described
in \cite[Chap. VI, \S 1, Remark 1]{Bo}).}  and we have $\tQ = Q(\tDe)$.
\item[(iv)]  Let
$\tL_\tal := \set{\tx\in \tL \suchthat [\tildeh,\tx] = \tal(\tildeh)
\tx \text{ for } \tildeh\in \tfh}$ for $\al\in Q$.
Then $\tL_\tal = \tC \cL_\al$ for $\al\in Q$ and
$\tL =  \bigoplus_{\tal\in\tDe} \tL_\tal$.
\item[(v)] $\De_\tC(\tL,\tfh) = \tDe$.
\item[(vi)] If $\al\in \De$, then $B_\al$ is a $\tC$-basis for $\tL_\tal$ and hence
$\rank_C (\cL_\al) = \dim_\tC(\tL_\tal)$.
\end{itemize}
\end{proposition}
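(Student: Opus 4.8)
The plan is to derive all six statements from a single fact established just above the proposition: the finite set $B$ is a $Q\times\Lm$-homogeneous $\tC$-basis of $\tL$. Combined with Proposition~\ref{prop:CmodLT} and the description \eqref{eq:Lroot} of the root grading, this makes nearly everything a bookkeeping exercise in linear algebra. For part~(i): by construction $B_0^0$ is a $\base$-basis of $\cL_0^0=\fh$; it is $\tC$-linearly independent in $\tL$ because it is a subset of the $\tC$-basis $B$, and it spans $\tfh=\tC\fh$ over $\tC$ because it spans $\fh$ over $\base$, so it is a $\tC$-basis of $\tfh$. This gives $\dim_\tC(\tfh)=\dim_\base(\fh)$ and identifies $\tfh$ with $\tC\ot_\base\fh$; consequently any $\base$-basis of $\fh$, being the image of $B_0^0$ under an invertible $\base$-linear map of $\fh$, is again a $\tC$-basis of $\tfh$.

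For part~(ii) the identification $\tfh\cong\tC\ot_\base\fh$ supplies, for each $\al\in\fh^*$, a unique $\tC$-linear extension $\tal\in\tfh^*$ (uniqueness since $\fh$ spans $\tfh$ over $\tC$), and the resulting $\base$-linear embedding $\fh^*\hookrightarrow\tfh^*$ extends to a $\tC$-linear isomorphism $\tC\ot_\base\fh^*\cong\tfh^*$, so it carries $\base$-bases of $\fh^*$ to $\tC$-bases of $\tfh^*$. The one step with real content is~\eqref{eq:fhchar}. The inclusion of $\fh$ in the right-hand side of \eqref{eq:fhchar} is clear. For the opposite inclusion I would use that $\De$ spans $\fh^*$ over $\base$ (part of the identification of $\De$ recalled in Section~\ref{sec:cssLT}): pick $\al_1,\dots,\al_r\in\De$ forming a $\base$-basis of $\fh^*$ and let $h_1,\dots,h_r\in\fh$ be the dual $\base$-basis; since $\tal_i(h_j)=\al_i(h_j)=\delta_{ij}$, the $h_j$ are the $\tC$-basis of $\tfh$ dual to $\tal_1,\dots,\tal_r$, so every $\tildeh\in\tfh$ equals $\sum_i\tal_i(\tildeh)h_i$; if $\tal(\tildeh)\in\base$ for all $\al\in\De$ then in particular the coefficients $\tal_i(\tildeh)$ lie in $\base$, whence $\tildeh\in\fh$.

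Part~(iii) is then obtained by transporting the root-system data along the embedding $\al\mapsto\tal$: $\tDe$ is the image of $\De$, the element $\al^\vee\in\fh\subseteq\tfh$ plays the role of the coroot of $\tal$ (the reflection $w_\al$ extends $\tC$-linearly and the Cartan integers $\langle\be,\al^\vee\rangle$ are unchanged), so $\tDe$ is an irreducible finite root system over $\tC$ of the same type as $\De$ — this is just base-field extension of root systems, as in the Bourbaki remark cited in the footnote — and additivity of $\al\mapsto\tal$ gives $\tQ=\spann_\bbZ(\tDe)=Q(\tDe)$. For part~(iv), \eqref{eq:Lroot} gives $\cL_\al\subseteq\tL_\tal$ (extend the eigenvalue relation $\tC$-linearly from $\fh$ to $\tfh$), hence $\tC\cL_\al\subseteq\tL_\tal$; on the other hand $\cL$ is the direct sum of the free $C$-submodules $\cL_\al$, $\al\in\De$, by Proposition~\ref{prop:CmodLT}(i), so $\tL=\bigoplus_{\al\in\De}\tC\cL_\al$, and since the spaces $\tL_\tal$ are independent (distinct weight spaces for the commuting operators $\ad(\tildeh)$) and $\al\mapsto\tal$ is injective, comparing the two decompositions forces $\tL_\tal=\tC\cL_\al$ for all $\al\in Q$ and $\tL=\bigoplus_{\tal\in\tDe}\tL_\tal$. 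Part~(v) is then immediate, $\De_\tC(\tL,\tfh)=\set{\gamma\in\tfh^*\suchthat\tL_\gamma\ne 0}=\tDe$, and for part~(vi), $B_\al$ is a $C$-basis of $\cL_\al$ by Proposition~\ref{prop:CmodLT}(i), hence a $\tC$-basis of $\tL_\tal=\tC\cL_\al$ by part~(iv), giving $\rank_C(\cL_\al)=|B_\al|=\dim_\tC(\tL_\tal)$.

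The proof has no serious obstacle: it is all bookkeeping with the homogeneous $\tC$-basis $B$. The only two places where more than linear algebra enters are \eqref{eq:fhchar}, where one genuinely uses that $\De$ spans $\fh^*$ in order to recover $\fh$ as the subalgebra of $\tfh$ on which every $\tal$, $\al\in\De$, takes values in $\base$, and the assertion in~(iii) that base-field extension does not change the type of a root system, which is standard.
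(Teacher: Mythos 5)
Your proof is correct and follows essentially the same route as the paper's: everything is read off from the $Q\times\Lm$-homogeneous $\tC$-basis $B$ of $\tL$, with \eqref{eq:fhchar} deduced from the fact that $\De$ contains a $\base$-basis of $\fh^*$ and (iv) from the inclusion $\tC\cL_\al\subseteq\tL_\tal$ together with the directness of $\sum_{\tal}\tL_\tal$. You merely spell out details that the paper leaves implicit.
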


\begin{proof}  $B_0^0$ was chosen as a $\base$-basis for $\fh = \cL_0^0$, and
$B_0^0$ is part of the $\tC$-basis $B$ for $\tL$.  This implies (i); (ii) follows from (i)
and the fact that $\De$ contains a $\base$-basis of $\fh^*$;
and (iii) follows from (ii).

Next $\tL = \sum_{\al\in Q} \tC\cL_\al$ and $\tC \cL_\al \subseteq \tL_\tal$
for $\al\in Q$.  Since the sum
$\sum_{\tal\in\tQ}\tL_\tal$ is direct, this implies (iv).  Also,
if $\al\in Q$, we have
$\tL_\tal \ne \zero \iff \tC\cL_\al \ne 0 \iff \cL_\al \ne 0  \iff \al\in\De$.
(Here we have used the equality $\De = \supp_Q(\cL)$ from (LT1).)   So we have
(v). Finally, if $\al\in Q$, then
$B_\al$ is part of a $C$-basis for $\cL$ by Proposition \ref{prop:CmodLT},  so
(vi) follows from (iv).
\end{proof}

\begin{theorem}
\label{thm:max}
Suppose  that $\cL$
is an fgc centreless Lie torus of type $(\De,\Lm)$ with central closure
$\tL = \tC \cL$.  Let $\fh = \cL^0_0$ and $\tfh = \tC \fh$.  Then,
$\tfh$ is a maximal split toral $\tC$-subalgebra of $\tL$.
\end{theorem}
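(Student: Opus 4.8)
The plan is to show that $\tfh$ is not properly contained in any split toral $\tC$-subalgebra of $\tL$. Since $\tfh$ is split toral and $\tL$ is finite dimensional central simple over $\tC$, it suffices to show that the centralizer of $\tfh$ in $\tL$ equals $\tfh$ itself; indeed, any split toral subalgebra containing $\tfh$ lies in this centralizer, and a split toral subalgebra that is abelian and consists of ad-semisimple elements and contains $\tfh$ would, if strictly larger, contradict $\tfh$ being its own centralizer (the extra toral element would centralize $\tfh$ but not lie in $\tfh$). So the key claim is:
\begin{equation*}
C_{\tL}(\tfh) := \set{\tx\in\tL \suchthat [\tildeh,\tx]=0 \text{ for all } \tildeh\in\tfh} = \tfh.
\end{equation*}

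To prove this, I would use the root-space decomposition from Proposition \ref{prop:rootspacec}(iv)--(v): $\tL = \bigoplus_{\tal\in\tDe}\tL_\tal$ with $\tL_\tal = \tC\cL_\al$, and $C_{\tL}(\tfh) = \tL_{\tilde 0} = \tC\cL_0$. Thus the claim reduces to showing $\tC\cL_0 = \tC\fh = \tC\cL_0^0$, equivalently that $\cL_0^\lm \subseteq \tC\cL_0^0$ inside $\tL$ for every $\lm\in\Lm$ (it is enough to check this for $\lm = \lm_i$ a coset representative of $\Gm$, since the $C^\gamma$ with $\gamma\in\Gm$ already move $\cL_0^0$ around within $\tC\cL_0^0$). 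The heart of the matter is therefore: given $x\in\cL_0^{\lm}$, show $x\in\tC\fh$. For this I would exploit $\spl_2$-theory as in Lemmas \ref{lem:domain}--\ref{lem:inverse}. Pick a base $\set{\al_1,\dots,\al_\ell}$ of $\De$; by Lemma \ref{lem:generate}, $\cL$ is generated by the $\cL_{\pm\al_i}$, and $\cL_0 = \sum_i [\cL_{\al_i},\cL_{-\al_i}] + (\text{higher brackets})$, but more usefully, for any $\al\in\Dec$ the element $x\in\cL_0$ acts on the one-dimensional space $\cL_\al^\mu$ (when nonzero) by a scalar, and the collection of these scalars over all $\al$ is constrained.

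Concretely, here is the route I expect to work. Fix $x \in \cL_0^\lambda$. For each $\al\in\Dec$ and each $\mu$ with $\cL_\al^\mu\ne 0$, write $[x, e_\al^\mu] = c_{\al,\mu}\, e_\al^{\mu+\lambda}$ — this makes sense because $[x,e_\al^\mu]\in\cL_\al^{\mu+\lambda} = \base e_\al^{\mu+\lambda}$ when the latter is nonzero, and equals $0$ otherwise; here $c_{\al,\mu}\in\base$. One checks using the Jacobi identity and the $\spl_2$-relations \eqref{eq:LT2} that these scalars are additive in $\al$ along root strings, so that $x$ acts like an element of $\tfh$: precisely, there is $\tildeh\in\tfh$ (built from the $c_{\al_i,0}$ and the action of $C$) with $[x - \tildeh, e_\al^\mu] = 0$ for all generating $\al = \pm\al_i$ and all $\mu$, whence $x - \tildeh$ centralizes the generators of $\cL$ by Lemma \ref{lem:generate}, so $x-\tildeh$ is central in $\tL$, so $x - \tildeh\in\tC$ (by central simplicity); but $\tC\cap\tfh$-considerations and the fact that $x-\tildeh\in\tL_{\tilde 0}$ with no constant... — actually since $\tL$ is central simple over $\tC$, its centre is $\tC\cdot 1_{\tL}$, but $\tL$ has no unit, so the centralizer of everything is $0$: thus $x = \tildeh\in\tfh$.

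The main obstacle I anticipate is the bookkeeping in the previous step: verifying that the scalars $c_{\al,\mu}$ genuinely patch together to the action of a single element of $\tfh$, rather than merely knowing $x$ acts semisimply. The cleanest way to handle this is probably not to track scalars at all, but to argue structurally: $\cL_0$ centralizes $\fh$, and $\cL_0$ is itself a module over the centroid $C$ with $\tC\cL_0 = \tL_{\tilde 0}$; since $\tL$ is central simple and $\tfh$ is split toral, $\tL_{\tilde 0}$ is a reductive subalgebra (being the centralizer of a torus in a semisimple Lie algebra over $\tC$), and one shows its semisimple part is trivial because any $\tC$-simple ideal of $\tL_{\tilde 0}$ would force a root $\tal$ with $\tal|_{\tfh}=0$ yet $\tal\ne 0$ — impossible by construction of $\tfh^*$ via Proposition \ref{prop:rootspacec}(ii), since the $\tal$ for $\al\in\Dec$ are nonzero on $\tfh$ and restrict to the nonzero roots $\Dei = \De_\base(\fg,\fh)$ which already span $\fh^*$. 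Hence $\tL_{\tilde 0}$ is abelian, and being split toral (its elements are ad-semisimple as they act diagonally on the $\tL_\tal$) and containing the maximal-by-spanning $\tfh$, it equals $\tfh$. I would write the proof along these structural lines, invoking Theorem \ref{thm:Selig}(i) to know $\De_\tC(\tL,\tfh) = \tDe$ is a genuine root system and standard facts about centralizers of tori in central simple Lie algebras over a field (from \cite[Chap.~I]{Se}).
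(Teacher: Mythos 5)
Your reduction rests on the claim that the centralizer of $\tfh$ in $\tL$ equals $\tfh$, i.e.\ that $\tL_{\tilde 0}=\tC\cL_0$ coincides with $\tC\cL_0^0=\tfh$. This is false in general, and it is precisely the point at which the proof breaks. By Proposition \ref{prop:rootspacec}, $\dim_\tC(\tL_{\tilde 0})=\rank_C(\cL_0)$ while $\dim_\tC(\tfh)=\dim_\base(\fh)$, and these differ for most non-untwisted Lie tori: for instance, for the exceptional Lie torus numbered 1 in Table \ref{tab:exceptional} (root-grading type $\type{A}{1}$, $\crk(\cL)=133$, $\rkv(\cL)=(27)$) one gets $\rank_C(\cL_0)=133-2\cdot 27=79$ whereas $\dim_\base(\fh)=1$; similarly for a special unitary Lie torus $\spu_{2+m}(R_n,1,D)$ with $m\ge 5$ one gets $\rank_C(\cL_0)=\binom{m+2}{2}-2m>1$. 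What is true is that $\tL_{\tilde 0}$ is the centralizer of the maximal split torus, namely $\tfh$ plus an \emph{anisotropic} kernel, and the content of the theorem is exactly that this kernel contains no further split toral elements. Your fallback "structural" argument does not repair this: the semisimple part of $\tL_{\tilde 0}$ need not be trivial, and a nonzero $\tC$-simple ideal of $\tL_{\tilde 0}$ does \emph{not} force a root $\tal\ne 0$ vanishing on $\tfh$ — an anisotropic simple Lie algebra over $\tC$ has no nonzero split toral subalgebra and hence contributes no roots at all, so no contradiction arises. (Self-centralizing is sufficient for maximality but far from necessary; you have conflated "maximal split torus" with "Cartan subalgebra".)

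The paper's proof takes a genuinely different and unavoidable route: it embeds $\tfh$ in a maximal split toral $\ft$, refines each $\tfh$-root space as $\tL_\tal=\bigoplus_{\ep\in E_\tal}\tL_\ep$ over the $\ft$-roots $\ep$ restricting to $\tal$, and uses Lemma \ref{lem:domain} (via the claim that $\ad(\tx)^2$ maps $\tL_{-\tal}$ bijectively onto $\tL_\tal$ for $0\ne\tx\in\tL_\tal$, $\al\in\Dec$) together with a lexicographic-order argument to show each $E_\tal$ is a singleton; then any $t\in\ft$ acts on the generating spaces $\tL_{\pm\tal_i}$ by single scalars matched by some $h\in\tfh$, whence $t-h$ is central and $t=h$. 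Note that this argument says nothing about, and does not need, the structure of $\tL_{\tilde 0}$ off of $\tfh$. To salvage your approach you would have to prove directly that $\tC\cL_0$ contains no split toral subalgebra properly containing $\tfh$, which is essentially the theorem itself.
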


\begin{proof} We first claim that if $\al\in\Dec$ and $\tx$ is a nonzero element of $\tL_\tal$, then
$\ad(\tx)^2$ maps $\tL_{-\tal}$ bijectively onto $\tL_\tal$. Now
$\tL_{-\tal}$ and $\tL_{\tal}$ have the same dimension over $\tC$, since they
are paired by the Killing form of $\tL$ over $\tC$.
Hence to prove the claim
it is enough to show that $\ad(\tx)^2|_{\tL_{-\tal}}$ is injective.  For this, we argue by
contradiction. Suppose that $\ad(\tx)^2 \ty=0$ for some nonzero element $\ty$ of $\tL_{-\tal}$.
Now, by Proposition \ref{prop:rootspacec}(iv), $\tx = c^{-1} x$ and $\ty = d^{-1} y$,
where $c$ and $d$ are nonzero elements of $C$,
$0\ne x\in\cL_\al$ and $0\ne y\in \cL_{-\al}$.
Then $\ad(x)^2 y =0$.  But this contradicts Lemma \ref{lem:domain} (with $\beta = -\alpha$),
so we have the claim.

To prove the theorem, let $\ft$ be a maximal split toral $\tC$-subalgebra
of $\tL$ containing $\tfh$, and let $E = \De_\tC(\tL,\ft)$.  By Theorem \ref{thm:Selig}(i),
$E$ is an irreducible finite root system over $\tC$ in $\ft^*$.  We
choose a $\bbZ$-basis for the root lattice
$Q(E)$ of $E$ and order $Q(E)$ using the corresponding lexicographic order. For
$\al\in \De$ we let
\[E_\tal = \set{\ep \in E \suchthat \ep|_{\tfh} = \tal}.\]
Since $[\ft,\tfh] = 0$, we have $[\ft,\tL_\tal] \subseteq \tL_\tal$ for $\al\in \De$.
Then, since $\tfh \subseteq \ft$, it follows easily that
\begin{equation}
\label{eq:splitmore}
\tL_\tal = \bigoplus_{\ep\in E_\tal} \tL_\ep.
\end{equation}
for $\al\in \De$.  (Here $\tL_\tal$ denotes a root space relative to $\tfh$,
whereas $\tL_\ep$ denotes a root space relative to $\ft$.)

Now let $\al \in \Dec$.  Then, $E_\tal \ne \emptyset$ by
\eqref{eq:splitmore}.  Let $\ep$ be the
maximum root in $E_\tal$,
and fix nonzero $x\in \tL_\ep$.
Then, again by \eqref{eq:splitmore}, $x\in \tL_\tal$.  So, as we saw above,
$\ad(x)^2$ maps $\tL_{-\tal}$ bijectively onto $\tL_{\al}$.   It follows from this that
$E_\tal = E_{-\tal} + 2\ep$.  Since $E_{-\tal} = -E_\tal$, we have
$E_\tal = -E_{\tal} + 2\ep$.
Hence, if $\zeta \in E_\tal$, we have $\zeta = -\eta + 2\ep$ for
some $\eta \in E_\tal$, which gives $2\ep = \zeta + \eta$.
But if $\zeta < \ep$ this forces $2\ep < \ep + \eta \le 2\ep$, a contradiction.
Therefore $E_\tal = \set{\ep}$; that is $E_\tal$ is a singleton.

Finally, to show that $\ft \subseteq \tfh$, let $t\in \ft$.
Let $\set{\al_1,\dots,\al_\ell}$ be a base for the root system $\De$,
and choose $\ep_1,\dots,\ep_\ell$ in $E$ with $E_{\tal_i} = \set{\ep_i}$ for $1\le i \le \ell$.
But, by Proposition \ref{prop:rootspacec}(ii),
$\tal_1,\dots,\tal_\ell$ is a $\tC$-basis for $\tfh^*$, and so we can choose $h\in \tfh$
such that $\tal_i(h) = \ep_i(t)$ for $1\le i \le \ell$.
Then it follows from \eqref{eq:splitmore} (with $\al = \al_i$) that
$\ad(h) = \ad(t)$ on $\tL_{\tal_i}$ for each $i$.  Similarly, since
$E_{-\tal_i} = -E_{\tal_i}= \set{-\ep_i}$,
$\ad(h) = \ad(t)$ on $\tL_{-\tal_i}$ for each $i$.
So, by Lemma \ref{lem:generate}, $\ad(h-t) = 0$ on $\tL$.  Since $\tL$
is centreless, $t = h\in \tfh$.
\end{proof}

\begin{corollary}
\label{cor:splittoral} $\fh$ is a maximal split toral $\base$-subalgebra of $\cL$.
\end{corollary}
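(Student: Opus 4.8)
The plan is to combine Theorem~\ref{thm:max} with the characterization \eqref{eq:fhchar} of $\fh$ inside $\tfh$, using the crucial fact that every element of the centroid $C$ commutes with $\ad_\cL(x)$ for all $x\in\cL$.  Recall from \eqref{eq:Lroot} (and the remark following it) that $\fh$ is already a split toral $\base$-subalgebra of $\cL$; so it suffices to show that every split toral $\base$-subalgebra $\fh'$ of $\cL$ with $\fh'\supseteq \fh$ satisfies $\fh'=\fh$.

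First I would pass to the central closure.  Fix such an $\fh'$.  Since $\fh'$ is split toral, $\cL$ decomposes as $\cL=\bigoplus_{\mu}\cL_\mu$, where $\mu$ runs over the (finitely many) $\base$-valued weights $\mu:\fh'\to\base$ and $\cL_\mu=\set{x\in\cL \suchthat [h',x]=\mu(h')x \text{ for } h'\in\fh'}$.  Because each $c\in C$ commutes with $\ad_\cL(h')$, every $\cL_\mu$ is a $C$-submodule of $\cL$, being an intersection of kernels of the $C$-linear maps $\ad_\cL(h')-\mu(h')\id$.  As $\tC$ is flat over $C$, extending scalars gives $\tL=\bigoplus_\mu \tC\cL_\mu$, and for $h'\in\fh'$ the operator $\ad_\tL(h')$ acts on $\tC\cL_\mu$ as the scalar $\mu(h')\in\base\subseteq\tC$.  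Picking $\tC$-bases of the summands $\tC\cL_\mu$ and taking their union exhibits a $\tC$-basis of $\tL$ consisting of simultaneous eigenvectors for $\ad_\tL(h')$, $h'\in\fh'$, with eigenvalues in $\base$; hence the $\tC$-span $\tC\fh'$ of $\fh'$ in $\tL$ --- which is abelian since $\fh'$ is --- is a split toral $\tC$-subalgebra of $\tL$.  Since $\tC\fh'\supseteq \tC\fh=\tfh$ and $\tfh$ is maximal by Theorem~\ref{thm:max}, we conclude $\tC\fh'=\tfh$, and in particular $\fh'\subseteq\tfh$.

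It remains to prove $\fh'\subseteq\fh$, for which I would invoke \eqref{eq:fhchar}.  Let $h'\in\fh'$.  By the previous paragraph, $\ad_\tL(h')$ is diagonalizable over $\tC$ with every eigenvalue of the form $\mu(h')$ for some weight $\mu$, hence in $\base$.  On the other hand, $h'\in\tfh$, so $\ad_\tL(h')$ acts on the root space $\tL_\tal=\tC\cL_\al$ by the scalar $\tal(h')$ for each $\al\in\De$, and $\tL_\tal\neq 0$ by Proposition~\ref{prop:rootspacec}; thus $\tal(h')$ is an eigenvalue of $\ad_\tL(h')$ and therefore lies in $\base$ for every $\al\in\De$.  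By \eqref{eq:fhchar} this forces $h'\in\fh$, so $\fh'=\fh$ and the corollary follows.

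I expect the one point requiring care to be the behaviour under base change: checking that the $\fh'$-weight spaces of $\cL$ are $C$-submodules, and that extending scalars from $C$ to its fraction field $\tC$ neither destroys diagonalizability nor produces eigenvalues outside $\base$ (the latter relying on the freeness of $\cL$ over $C$ from Proposition~\ref{prop:CmodLT}, so that a nonzero weight space stays nonzero after extension).  Everything else is a direct application of Theorem~\ref{thm:max} together with Proposition~\ref{prop:rootspacec}.
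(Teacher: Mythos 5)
Your proof is correct and follows essentially the same route as the paper: pass to the central closure, observe that $\tC\fh'$ is a split toral $\tC$-subalgebra containing $\tfh$, invoke Theorem~\ref{thm:max} to get $\tC\fh'=\tfh$, and then use the rationality of the eigenvalues together with \eqref{eq:fhchar} to conclude $\fh'\subseteq\fh$. The only difference is that you spell out the base-change details (the $\fh'$-weight spaces being $C$-submodules and surviving extension to $\tC$) that the paper's proof leaves implicit.
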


\begin{proof}  Suppose that $\ft$ is a split toral $\base$-subalgebra
of $\cL$ containing $\fh$.  Then $\tilde\ft := \tC\ft$ is a split toral
$\tC$-subalgebra of $\tL$ containing $\tfh$. Consequently, by Theorem
\ref{thm:max},~$\tilde \ft = \tfh$.

Now let $t\in \ft$.  So $t\in \tilde \ft = \tfh$.  But $\ad_\cL(t)$
is diagonalizable linear operator on $\cL$ over $\base$, and hence  $\ad_\tL(t)$ is
a diagonalizable linear operator on $\tL$ over
$\tC$ with eigenvalues lying in $\base$.  So
$\tal(t)\in \base$ for $\al\in \De$.  Thus, by \eqref{eq:fhchar},~$t\in \fh$.
\end{proof}

The next corollary was announced in \cite{ABP3} as Theorem 5.5.1
and used there as one of the main tools in the classification
of nullity 2 multiloop Lie
algebras.\footnote{In \cite{ABP3}, each result
in the sequence Theorem 5.5.1, Corollary 5.5.2, Theorem 9.2.1,
Theorem 12.2.1, Table 2, Theorem 13.2.1(b) and the
classification Theorem 13.3.1 uses its predecessor.}  
\footnote{The classification
of nullity 2 multiloop Lie algebras has subsequently also been obtained
by Gille and Pianzola in \cite{GP2} as a consequence of their
classification of $R_2$-loop simple adjoint groups and algebras using cohomological methods .}

\begin{corollary}
\label{cor:relativetype}
The relative type of $\tL$ is the root-grading type of $\cL$.
\end{corollary}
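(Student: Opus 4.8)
The plan is to read off the relative type directly from the apparatus already built in this section. Recall that the relative type of the isotropic finite dimensional central simple algebra $\tL$ over $\tC$ is, by definition (see Section \ref{sec:splittoral}), the type of the root system $\De_\tC(\tL,\ft)$ for any maximal split toral $\tC$-subalgebra $\ft$ of $\tL$, and this is well-defined by Theorem \ref{thm:Selig}(ii) together with Lemma \ref{lem:formal} applied with $F = F' = \tC$. So it suffices to exhibit one maximal split toral $\tC$-subalgebra and compute the type of its root system.

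First I would invoke Theorem \ref{thm:max}, which tells us that $\tfh = \tC\fh$ is a maximal split toral $\tC$-subalgebra of $\tL$. Hence the relative type of $\tL$ equals the type of $\De_\tC(\tL,\tfh)$. Next I would invoke Proposition \ref{prop:rootspacec}(v), which identifies $\De_\tC(\tL,\tfh) = \tDe$, and then Proposition \ref{prop:rootspacec}(iii), which states that $\tDe$ is an irreducible finite root system of the same type as $\De$. Therefore the relative type of $\tL$ is the type of $\De$, which by Definition \ref{def:nulltype} is precisely the root-grading type of $\cL$. That is the whole argument.

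There is essentially no obstacle here: the corollary is a formal consequence of the two preceding results, and the only thing to be careful about is matching definitions — namely that "relative type" is defined via an arbitrary maximal split toral subalgebra (so Theorem \ref{thm:max} is exactly what licenses us to use $\tfh$), and that "root-grading type" is by definition the type of $\De$. One could phrase the proof in a single sentence chaining these equalities: relative type of $\tL$ $=$ type of $\De_\tC(\tL,\tfh)$ (by Theorem \ref{thm:max} and the definition of relative type) $=$ type of $\tDe$ (by Proposition \ref{prop:rootspacec}(v)) $=$ type of $\De$ (by Proposition \ref{prop:rootspacec}(iii)) $=$ root-grading type of $\cL$ (by Definition \ref{def:nulltype}). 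The real content was already spent in proving Theorem \ref{thm:max}.
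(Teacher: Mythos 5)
Your argument is correct and coincides with the paper's own proof: both deduce the claim from Theorem \ref{thm:max} (so that $\tfh$ may be used to compute the relative type) together with Proposition \ref{prop:rootspacec}(iii) and (v). Nothing is missing.
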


\begin{proof}  By Theorem \ref{thm:max}, the relative type of $\tL$ is
the type of the root system $\De_\tC(\tL,\tfh)$, which, by
Proposition \ref{prop:rootspacec}(iii) and (v), has the same type as $\De$.
\end{proof}

\section{Some isomorphism invariants}
\label{sec:inv}

Suppose that  
$\cL$ is an fgc centreless Lie torus $\cL$ of type
$(\De,\Lm)$ with centroid $C$.  We now describe four entities that we then show are
isomorphism invariants of $\cL$.

Recall first that
we defined the root-grading  type of $\cL$ and the nullity of $\cL$  in Definition
\ref{def:nulltype}.  Next, we define
the \emph{centroid rank} of $\cL$ to be
\[\crk(\cL) := \rank_C(\cL).\]

Finally,  it follows from \cite[(4)]{ABFP2} that if $\al,\beta\in\Dec$
are in the same orbit under the Weyl group of $\cL$,
then  $\rank_C(\cL_\al) = \rank_C(\cL_\beta)$.  Consequently,
this equality of rank holds whenever $\al,\beta$
have the same length.  So, we may define
$\rk_\short(\cL)$ to be $\rank_C(\cL_\al)$,
where $\al$ is a short
root\footnote{Our root length terminology follows \cite{AABGP}.
Roots of minimum length in $\Dec$ are called short,
roots in $\Dec\cap (2\Dec)$ are called extra-long, and all other roots in
$\Dec$ are called long.}
in $\Dec$.
If there exists a long root (resp.~an extra long root)
$\al$ in $\Dec$  we define $\rk_\lng(\cL)$
(resp.~$\rk_\extra(\cL)$) to be $\rank_C(\cL_\al)$.
Putting these quantities together, we define a vector of positive integers
\[\rkv(\cL) =
\left\{
  \begin{array}{ll}
    (\rk_\short(\cL))
            & \hbox{if $\De$ is reduced and simply laced,} \\
     (\rk_\short(\cL),\rk_\lng(\cL))
            & \hbox{if $\De$ is reduced and not simply laced,}\\
          (\rk_\short(\cL),\rk_\extra(\cL))
            & \hbox{if $\De$ is of type BC$_1$,} \\
     (\rk_\short(\cL),\rk_\lng(\cL),\rk_\extra(\cL))
            & \hbox{if $\De$ is of type BC$_\ell$, $\ell\ge 2$,}
  \end{array}
\right.
\]
which we call the \emph{root-space rank vector} of $\cL$.

\begin{proposition}
\label{prop:fourinv}
Suppose $\cL$  and $\cL'$ are fgc centreless Lie tori
with central closures $\tL$ and $\tLp$ respectively.
If
$\tL$ and $\tLp$ are isomorphic as Lie algebras over $\base$, then
\begin{itemize}
\item[(i)] The root-grading type of $\cL$ equals the root-grading type of $\cL'$.
\item[(ii)] The nullity of $\cL$ equals the nullity of $\cL'$.
\item[(iii)] $\crk(\cL) = \crk(\cL')$.
\item[(iv)] $\rkv(\cL) = \rkv(\cL')$.
\end{itemize}
\end{proposition}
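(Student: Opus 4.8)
The plan is to deduce all four statements from the structure of the central closures, which by Section~\ref{sec:closureLT} are finite-dimensional isotropic central simple Lie algebras realizing the root grading through the maximal split toral subalgebra $\tfh = \tC\fh$ (Theorem~\ref{thm:max}). So fix a $\base$-algebra isomorphism $\ph : \tL \to \tLp$, and write $C' = \Cd_\base(\cL')$. Since $\tL$ is perfect, Remark~\ref{rem:cinduced}(i) gives $\Cd_\base(\tL) = \Cd_\tC(\tL) = \tC$, and likewise $\Cd_\base(\tLp) = \tCp$; so by Remark~\ref{rem:cinduced}(ii) the map $\ph$ induces a ring isomorphism $\chi : \tC \to \tCp$ with $\ph(cx) = \chi(c)\ph(x)$, and $\chi$ fixes $\base$ because $\ph$ is $\base$-linear. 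Thus $\ph$ is $\chi$-semilinear, so it carries a $\tC$-basis of $\tL$ to a $\tCp$-basis of $\tLp$; combined with \eqref{eq:rankL} this gives $\crk(\cL) = \dim_\tC(\tL) = \dim_{\tCp}(\tLp) = \crk(\cL')$, which is (iii). For (ii), Proposition~\ref{prop:fincondLT}(ii) shows the nullity of $\cL$ equals $\rank_\bbZ(\Gm)$, and since $C \simeq \base[\Gm]$ is a Laurent polynomial ring over $\base$ in $\rank_\bbZ(\Gm)$ variables by \eqref{eq:centstructure}, the nullity of $\cL$ is the transcendence degree of $\tC$ over $\base$; as $\chi$ is a $\base$-algebra isomorphism of $\tC$ onto $\tCp$, this transcendence degree is unchanged, which is (ii).

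For (i) and (iv) I would package the graded data of $\tL$ as an invariant. To any isotropic finite-dimensional central simple Lie algebra $\cM$ over an extension field $F$ of $\base$, attach the pair consisting of the root system $\De_F(\cM,\ft)$, where $\ft$ is a maximal split toral $F$-subalgebra, together with the function $\al \mapsto \dim_F(\cM_\al)$ on it. Using Theorem~\ref{thm:Selig}(ii) to conjugate one maximal split toral $F$-subalgebra to another, and then Lemma~\ref{lem:formal} (with $F = F'$) to see that such a conjugation induces a group isomorphism of root lattices carrying one root system onto the other and preserving the dimension function, this pair is well defined, independently of $\ft$, up to isomorphism of root systems respecting the attached function; and Lemma~\ref{lem:formal} applied with base fields $F$ and $F'$ to $\ph$ shows it is preserved under $\base$-algebra isomorphisms. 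By Theorem~\ref{thm:max} we may compute this invariant for $\tL$ using $\ft = \tfh$, and then, by Proposition~\ref{prop:rootspacec}(iii),(v),(vi) and the type-preserving correspondence $\al \leftrightarrow \tal$ (which also matches up short, long and extra-long roots), it is identified with the pair $\bigl(\De,\ \al \mapsto \rank_C(\cL_\al)\bigr)$; similarly the corresponding data for $\tLp$ is identified with $\bigl(\De',\ \be \mapsto \rank_{C'}(\cL'_\be)\bigr)$, where $\De'$ is the root system defining the root grading of $\cL'$.

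Comparing, the hypothesis that $\tL$ and $\tLp$ are $\base$-isomorphic therefore produces a group isomorphism $Q(\De) \to Q(\De')$ carrying $\De$ bijectively onto $\De'$ and $\al \mapsto \rank_C(\cL_\al)$ to $\be \mapsto \rank_{C'}(\cL'_\be)$. Any such isomorphism is automatically an isomorphism of root systems --- it is additive, hence carries root strings to root strings and so preserves the Cartan integers, and it commutes with $\gm \mapsto 2\gm$ --- so it preserves the type, which is (i) (equivalently, one can invoke Corollary~\ref{cor:relativetype}), and the partition of the nonzero roots into short, long and extra-long roots. Reading $\rkv$ off from $\al \mapsto \rank_C(\cL_\al)$ grouped by these classes then gives $\rkv(\cL) = \rkv(\cL')$, which is (iv). I expect (iv) to be the main obstacle: along the chain $\bigl(\De, \rank_C(\cL_\bullet)\bigr) \leftrightarrow (\tL, \tfh) \leftrightarrow (\tLp, \ph(\tfh)) \leftrightarrow (\tLp, \tfhp) \leftrightarrow \bigl(\De', \rank_{C'}(\cL'_\bullet)\bigr)$ one must remember that $\ph(\tfh)$ need not equal $\tfhp$, so a conjugating automorphism of $\tLp$ must be inserted, and one must check that each link of the chain respects simultaneously the root-system structure and the rank (resp.~dimension) function attached to the roots. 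Parts (i)--(iii) are comparatively routine once this machinery is in place.
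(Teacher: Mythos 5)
Your proposal is correct and follows essentially the same route as the paper's proof: transcendence degree of $\tC$ for the nullity, $\dim_\tC(\tL)=\rank_C(\cL)$ for the centroid rank, and for (i) and (iv) the combination of Theorem~\ref{thm:max}, Theorem~\ref{thm:Selig}(ii) (to replace $\ph(\tfh)$ by $\tfhp$), Lemma~\ref{lem:formal} and Proposition~\ref{prop:rootspacec}, finishing with the standard fact that a lattice isomorphism carrying $\De$ onto $\De'$ extends to an isomorphism of root systems and hence preserves type and root lengths. The only difference is presentational: you package the root system together with its rank function as a single intermediate invariant, whereas the paper chases the same data through an explicit commutative square of root lattices.
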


\begin{proof}  We use the notation (for example $\fh = \cL_0^0$)
of Sections \ref{sec:cssLT}  and \ref{sec:closureLT}; and
we use corresponding primed notation (for example $\fh' = {\cL'}_0^0$)
for $\cL'$.  Let $\ph : \tL \to \tLp$ be a  $\base$-algebra
isomorphism.

(i): It follows from Lemma \ref{lem:formal} (with $F = C$ and $F' = C'$)
that $\tL$ and $\tLp$ have the same relative type.  Hence, by Corollary \ref{cor:relativetype},
we have (i).

(ii):  This is easy to see (and does not require
the results of Section \ref{sec:closureLT}).
Indeed, by Proposition \ref{prop:fincondLT}(ii), $\rank_\bbZ(\Lm) = \rank_\bbZ(\Gm)$
and similarly $\rank_\bbZ(\Lm') = \rank_\bbZ(\Gm')$.  So it suffices to show that
$\rank_\bbZ(\Gm) = \rank_\bbZ(\Gm')$.
Now $\tC = \Cd_\base(\tL)$ and $\tCp = \Cd_\base(\tLp)$, so $\tC \simeq \tCp$.
But, by \eqref{eq:centstructure},
$\tC$ (resp.~$\tCp$) is isomorphic to the field of rational functions
in $\rank_\bbZ(\Gm)$ (resp.~$\rank_\bbZ(\Gm')$) variables over $\base$, so
$\rank_\bbZ(\Gm) = \rank_\bbZ(\Gm')$  as desired.

(iii): This is clear (and does not use  Theorem \ref{thm:max}).  Indeed, it follows easily from
Remark \ref{rem:cinduced}(ii) (applied to $\tL$ and $\tLp$) that
$\dim_\tC(\tL) = \dim_\tCp(\tLp)$.  So,
by \eqref{eq:rankL},  $\rank_C(\cL) = \rank_{C'}(\cL')$.

(iv): By Theorem \ref{thm:max},
$\tfh$ is a maximal split toral $\tC$-subalgebra of $\tL$.
So, by Lemma \ref{lem:formal} applied to $\tL$ and $\tLp$,
$\ph(\tfh)$ is a maximal split toral $\tCp$-subalgebra
of $\tLp$.  Thus, by Theorem \ref{thm:Selig}(ii),
we can assume that $\ph(\tfh) = \tfhp$.  Now by Proposition
\ref{prop:rootspacec}(iii) and (v),
we have $\tQ = Q(\tDe)$ and $\De_\tC(\tL,\tfh) = \tDe$,
as well as corresponding equations for $\cL'$.
Thus, by Lemma \ref{lem:formal} applied to
$\tL$ and $\tLp$,
there exists a group isomorphism $\rho : \tQ \to \tQp$ such that
$\rho(\tDe) = \tDep$ and $\dim_\tC(\tL_\tal) = \dim_{\tCp}(\tLp_{\rho(\tal)})$
for $\tal\in \tQ$.
Finally, we let
$\tau : Q \to Q'$ be the group
isomorphism such that the following diagram commutes:
\[
\begin{CD}
Q       @>{\tau}>>    Q'\\
@VV {\widetilde{}} V            @VV {\widetilde{}} V\\
\tQ      @>{\rho}>>    \tQp
\end{CD}
\]
Then $\tau(\De) = \De'$; and we have
$\rank_C(\cL_\al) = \rank_{C'}(\cL'_{\tau(\al)})$ for $\al\in \De$
by  Proposition \ref{prop:rootspacec}(vi).
Finally, $\tau$ extends to a $\base$-linear isomorphism
$\fh^* \to {\fh'}^*$ which maps $\De$ onto $\De'$.  This extension is an isomorphism
of root systems, and so it maps  short roots, long roots
and extra long roots in $\Dec$ to roots of corresponding length in~${\De'}^\times$.
\end{proof}

By Remark \ref{rem:closureisom}, the following result follows immediately
from Proposition~\ref{prop:fourinv}.

\begin{theorem}
\label{thm:fourinv}
If $\cL$  and $\cL'$ are fgc centreless Lie tori that
are isomorphic as $\base$-algebras, then
(i), (ii), (iii) and (iv) in Proposition \ref{prop:fourinv} hold.  That is,
the root-grading type, the nullity,  the centroid rank,  and the root-space rank vector
are isomorphism invariants of an fgc centreless Lie torus.
\end{theorem}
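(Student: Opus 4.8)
The plan is to derive the theorem at once from Proposition \ref{prop:fourinv}, which already carries all the substance (it is there that Theorem \ref{thm:max}, Corollary \ref{cor:relativetype} and the analysis of the central closure in Section \ref{sec:closureLT} are used). What remains is only to pass from an isomorphism of the Lie tori to an isomorphism of their central closures, which is precisely Remark \ref{rem:closureisom}; I would make that remark explicit.

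Concretely: given a $\base$-algebra isomorphism $\ph : \cL \to \cL'$, Remark \ref{rem:cinduced}(ii) supplies a unique ring isomorphism $\chi : C \to C'$ of centroids with $\ph(c\cdot x) = \chi(c)\cdot\ph(x)$ for $c \in C$, $x \in \cL$. Since $C$ and $C'$ are integral domains (by \eqref{eq:centstructure}), $\chi$ extends uniquely to an isomorphism $\tC \to \tCp$ of quotient fields, and base-changing $\ph$ along this extension gives a map $\tL = \tC \otimes_C \cL \to \tCp \otimes_{C'} \cL' = \tLp$; one checks directly (compatibility of the module structures via $\chi$, multiplicativity, bijectivity) that it is a $\base$-algebra isomorphism. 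Thus $\tL \simeq \tLp$ as $\base$-algebras, and Proposition \ref{prop:fourinv} applies verbatim to give conclusions (i)--(iv).

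Finally I would observe that the four quantities in question are attached to $\cL$ using only the abstract algebra structure together with a choice of (LT1)--(LT2) data, so statements (i)--(iv) say exactly that the root-grading type, the nullity, the centroid rank and the root-space rank vector are isomorphism invariants of an fgc centreless Lie torus --- in particular, independent of the chosen grading and $\spl_2$-triples. There is no real obstacle here: the only step carrying any content at all is the base-change lift of $\chi$ to an isomorphism $\tL \to \tLp$, and that is a routine consequence of the universal property of localization together with Remark \ref{rem:cinduced}(ii). (One may also note, following the proof of Proposition \ref{prop:fourinv}, that parts (ii) and (iii) do not even require Theorem \ref{thm:max}.)
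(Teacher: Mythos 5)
Your proposal is correct and follows exactly the paper's route: the paper deduces the theorem from Proposition \ref{prop:fourinv} via Remark \ref{rem:closureisom}, and your argument simply makes explicit the content of that remark (extending the centroid isomorphism from Remark \ref{rem:cinduced}(ii) to the quotient fields and base-changing). No gap; this is the same proof with the "it follows easily" step written out.
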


The above proofs also show that the rank of $\cL_0$ over $C$
is an isomorphism invariant.  However,
this invariant is redundant, since it can be computed from the root-grading type,
the centroid rank and the root-space rank vector of $\cL$.

If $\cL$ is an  fgc centreless Lie algebra that possesses the graded structure
of a Lie torus, we can now unambiguously speak of the \emph{root-grading type},
the \emph{nullity}, the \emph{centroid rank} and the \emph{root-space rank vector}
of $\cL$, since these entities do not depend on the graded structure.

\begin{remark} \label{rem:Titsindex}
If  $\cL$ is an fgc centreless  Lie torus (or more generally any prime perfect fgc Lie algebra), the (Tits)
\emph{index} of $\cL$ is the index, as defined in \cite[\S 2.3]{T}, of the connected component of the automorphism
group of the finite dimensional central simple Lie algebra  $\tL$ over $\tC$. (See Section \myref{sec:closureLT} for the notation.)
The index of $\cL$ is a (non-rational) isomorphism invariant of $\cL$ \cite[Lemma 14.1.5]{ABP3}.
We won't use the index in this article. However, to provide a link to recent work on
multiloop algebras \cite{ABFP2, ABP3, GP1, GP2}, we will later display without proof  the index of
each fgc centerless Lie torus (see Table~\ref{tab:exceptional} and  Remark \ref{rem:Titsclassical}).
\end{remark}

\goodbreak
\section{Isotopy}
\label{sec:isotopy}

Suppose   that $\cL$ is a Lie torus of type $(\De,\Lm)$
and $\cL'$ is a Lie torus of type $(\De',\Lm')$.
An \emph{isotopy} of $\cL$ onto $\cL'$ is an algebra
isomorphism $\ph : \cL \to \cL'$ such that
\[\ph(\cL_\al^\lm) = {\cL'}_{\pr(\al)}^{\pe(\lm)+\ps(\al)},\]
for $\al\in Q$ and $\lm\in \Lm$,
where $\pr : Q \to Q'$ and $\pe : \Lm \to \Lm'$ are group isomorphisms
and $\ps : Q \to \Lm'$ is a group homomorphism.  In that case,
it is easy to check using (LT2)(i) and (LT4) that the maps $\pr$, $\pe$ and $\ps$ are uniquely determined.
It is also easy to check that the composite of two isotopies is an isotopy
and that the inverse of an isotopy is an isotopy.
We say that $\cL$ and  $\cL'$ are
\emph{isotopic}\footnote{The term isotopic was defined in a different way
in \cite[Def.~2.2.9]{ABFP2} and \cite[Def.~5.5]{AF}, but it is easy to check
that the definitions are equivalent.} if there
exists an isotopy from $\cL$ onto $\cL'$.

Finally, we define a \emph{bi-isomorphism}\footnote{Bi-isomorphism
is short for the more suggestive but cumbersome term bi-isograded-isomorphism.}
of $\cL$ onto $\cL'$ to
be an isotopy $\ph : \cL \to \cL'$ with $\ps = 0$.
If such a bi-isomorphism exists we say that $\cL$ and  $\cL'$ are
\emph{bi-isomorphic}.

If $\cL$ is bi-isomorphic to $\cL'$, then by definition $\cL$ is isotopic to $\cL'$;
however  the converse is not true \cite[Example 4.3.1]{ABFP2}.  Also, if
$\cL$ is isotopic to $\cL'$,  then by definition $\cL$ is isomorphic to $\cL'$.
We will consider the converse statement in
Section \myref{sec:conjimp}.

We next    show that
$\Lm/\Gm(\cL)$ is an isotopy invariant of a centreless Lie torus.

\begin{proposition}
\label{prop:isotopyinv}
Suppose that
$\cL$ and $\cL'$ are centreless Lie tori of type $(\De,\Lm)$ and
$(\De',\Lm')$ respectively.
If $\cL$ is isotopic to  $\cL'$, then $\Lm/\Gm(\cL) \simeq \Lm'/\Gm(\cL')$.
\end{proposition}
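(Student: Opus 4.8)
The plan is to show that an isotopy transports the centroid grading in a controlled way, so that the support subgroups $\Gm(\cL)$ and $\Gm(\cL')$ correspond under the external-grading isomorphism. First I would let $\ph : \cL \to \cL'$ be an isotopy, with the associated group maps $\pr : Q \to Q'$, $\pe : \Lm \to \Lm'$, and $\ps : Q \to \Lm'$, so that $\ph(\cL_\al^\lm) = {\cL'}_{\pr(\al)}^{\pe(\lm)+\ps(\al)}$. By Remark \ref{rem:cinduced}(ii), $\ph$ induces an algebra isomorphism $\chi : C \to C'$ between the centroids characterized by $\ph(cx) = \chi(c)\ph(x)$; since $\cL$ and $\cL'$ are centreless (hence prime and perfect), $C = \Cd_\base(\cL)$ and $C' = \Cd_\base(\cL')$ are the relevant objects, and $\chi$ is a ring isomorphism.

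Next I would track what $\chi$ does to the $\Lm$-grading of $C$. Recall $C = \bigoplus_{\lm} C^\lm$ with $C^\lm = \{c \in C : c(\cL^\mu) \subseteq \cL^{\mu+\lm}\}$, and similarly for $C'$ with grading group $\Lm'$. The key computation: if $c \in C^\lm$, I want to determine the $\Lm'$-degree of $\chi(c)$. Take a nonzero homogeneous $x \in \cL_\al^\mu$; then $\ph(x) \in {\cL'}_{\pr(\al)}^{\pe(\mu)+\ps(\al)}$ and $\ph(cx) = \chi(c)\ph(x)$. Now $cx \in \cL^{\mu+\lm}$ (and lies in $\cL_\al^{\mu+\lm}$ since $c$ preserves root spaces — this follows because $c$, being in the centroid, commutes with all $\ad h$ for $h \in \fh$, hence preserves each $\cL_\al$ by \eqref{eq:Lroot}). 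Therefore $\ph(cx) \in {\cL'}_{\pr(\al)}^{\pe(\mu+\lm)+\ps(\al)} = {\cL'}_{\pr(\al)}^{\pe(\mu)+\pe(\lm)+\ps(\al)}$. Comparing, $\chi(c)\ph(x)$ must have $\Lm'$-degree $\pe(\mu)+\pe(\lm)+\ps(\al)$ more precisely it shifts the $\Lm'$-degree of $\ph(x)$ by $\pe(\lm)$. Since this holds for all homogeneous $x$ (and the $\ph(x)$ span $\cL'$), we get $\chi(C^\lm) \subseteq {C'}^{\pe(\lm)}$. Applying the same argument to $\ph^{-1}$ gives the reverse inclusion, so $\chi(C^\lm) = {C'}^{\pe(\lm)}$ for all $\lm \in \Lm$.

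From $\chi(C^\lm) = {C'}^{\pe(\lm)}$ it follows immediately that $\pe$ maps $\supp_\Lm(C) = \Gm(\cL)$ bijectively onto $\supp_{\Lm'}(C') = \Gm(\cL')$: indeed $\lm \in \Gm(\cL) \iff C^\lm \ne 0 \iff {C'}^{\pe(\lm)} \ne 0 \iff \pe(\lm) \in \Gm(\cL')$. Since $\pe : \Lm \to \Lm'$ is a group isomorphism carrying the subgroup $\Gm(\cL)$ onto the subgroup $\Gm(\cL')$, it induces an isomorphism of quotients $\Lm/\Gm(\cL) \simeq \Lm'/\Gm(\cL')$, as desired.

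I expect the main (minor) obstacle to be the bookkeeping in the degree-shift computation: one must be careful that $c$ genuinely preserves each root space $\cL_\al$ (so that $\ph(cx)$ lands in a single homogeneous component whose degree can be read off and compared), and that the relation $\ph(cx) = \chi(c)\ph(x)$ is being used to compare $\Lm'$-degrees of elements that are themselves homogeneous. Everything else is formal: the interplay of $\pe$ and $\ps$ causes no trouble because the $\ps(\al)$ contribution cancels when one compares the degrees of $\ph(x)$ and $\ph(cx)$ (both root-graded components have the same root $\pr(\al)$, hence the same $\ps$-shift). No deeper structure theory is needed beyond Remark \ref{rem:cinduced}(ii), \eqref{eq:Lroot}, and the definition of the $\Lm$-grading on the centroid.
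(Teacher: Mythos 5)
Your proposal is correct and follows essentially the same route as the paper: both use the induced centroid isomorphism $\chi$ from Remark \ref{rem:cinduced}(ii), compute that $\chi$ shifts the $\Lm$-grading of $C$ to the $\Lm'$-grading of $C'$ via $\pe$ (with the $\ps(\al)$-contribution cancelling because $c$ preserves root spaces), and conclude $\pe(\Gm(\cL)) = \Gm(\cL')$. The only cosmetic difference is that you obtain the equality $\chi(C^\lm) = (C')^{\pe(\lm)}$ by running the argument for $\ph^{-1}$, while the paper deduces it from the inclusion together with the invertibility of $\pe$; these are interchangeable.
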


\begin{proof}  Let $\ph : \cL \to \cL'$ be an isotopy, $C = \Cd(\cL)$ and
$C' = \Cd(\cL')$.  Since $\ph$ is an isomorphism, we have
an induced isomorphism $\chi : C \to C'$
as in Remark \ref{rem:cinduced}(ii).  Then for $\lm, \mu\in \Lm$ and $\al\in Q$,
we have, setting $\lm' = \pe(\mu)+\ps(\al)$, that
\[\chi(C^\lm)({\cL'}_{\pr(\al)}^{\lm'})
= \chi(C^\lm) \ph(C_\al^\mu) = \ph(C^\lm \cL_\al^\mu) = \ph(\cL_\al^{\mu+\lm})
= {\cL'}_{\pr(\al)}^{\ph_e(\lm) + \lm'}.
\]
But for $\al\in Q$, $\pe(\Lm)+\ps(\al)  = \Lm'$.
Hence $\chi(C^\lm) \subseteq (C')^{\pe(\lm)}$ for $\lm\in \Lm$.  Thus,
since $\pe$ is invertible,
 $\chi(C^\lm) = (C')^{\pe(\lm)}$ for $\lm\in \Lm$.
Hence $\pe(\Gm(\cL)) = \Gm(\cL')$, and therefore $\pe$ induces the desired isomorphism.
\end{proof}

It does  not follow from Proposition \ref{prop:isotopyinv}  that
$\Lm/\Gm(\cL)$ is an isomorphism invariant.  We will consider this issue later in Section
\myref{sec:conjimp} for fgc centreless Lie tori.

We  have the following simple characterization
of isotopies of centreless  Lie tori.

\begin{theorem}
\label{thm:isotopychar}
Suppose that $\cL$ and $\cL'$ are centreless Lie tori of type $(\De,\Lm)$ and
$(\De',\Lm')$ respectively.  Let $\fh =  \cL_0^0$ and  $\fh' = {\cL'}_0^0$.
If $\ph : \cL \to \cL'$ is an algebra isomorphism, then
\[\ph \text{ is an isotopy} \iff \ph(\fh) = \fh'.\]
\end{theorem}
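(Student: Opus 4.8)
The plan is to prove the two implications separately, with the forward direction being essentially immediate and the reverse direction being the substantive part. For the forward implication, suppose $\ph$ is an isotopy with associated data $\pr, \pe, \ps$. Then $\ph(\fh) = \ph(\cL_0^0) = {\cL'}_{\pr(0)}^{\pe(0)+\ps(0)} = {\cL'}_0^0 = \fh'$, since $\pr$, $\pe$ are group homomorphisms (so send $0$ to $0$) and $\ps$ is a group homomorphism (so $\ps(0)=0$). So that direction costs one line.

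For the reverse implication, assume $\ph(\fh) = \fh'$. Since $\cL$ and $\cL'$ are centreless Lie tori, they are prime and perfect (primeness by Proposition \ref{prop:prime}, perfectness from (LT2)/(LT3)); moreover $\fh$ is a split toral $\base$-subalgebra of $\cL$ with root-space decomposition equal to the root grading, by \eqref{eq:Lroot}, and similarly for $\cL'$. First I would invoke Lemma \ref{lem:formal} with $F = F' = \base$, $\cL$ central (a Lie torus modulo its centre is central — here already centreless) to conclude that $\ph$ carries the root-space decomposition of $\cL$ relative to $\fh$ to that of $\cL'$ relative to $\fh'$: there is a unique group isomorphism $\pr = \rho : Q \to Q'$ with $\ph(\cL_\al) = {\cL'}_{\pr(\al)}$ for $\al\in Q$ and $\pr(\De) = \De'$. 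This already shows $\ph$ respects the $Q$-grading via $\pr$; the work is to understand what $\ph$ does to the $\Lm$-grading inside each $\cL_\al$, i.e. to produce $\pe$ and $\ps$.

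The key idea for constructing $\pe$ and $\ps$ is to use the centroid. By Remark \ref{rem:cinduced}(ii), $\ph$ induces an algebra isomorphism $\chi : C \to C'$ of centroids with $\ph(cx) = \chi(c)\ph(x)$. Using the grading of $C$ by $\Gm = \Gm(\cL)$ from \eqref{eq:centstructure} and the analogous grading of $C'$, one checks that $\chi$ is graded: indeed $\ph(\fh) = \fh'$ pins down the $\lm = 0$ situation, and one propagates using that $\fh$-root spaces are matched. More precisely, since $\cL^\lm = \bigoplus_\al \cL_\al^\lm$ and the $\Lm$-grading is recovered from the $C = \base[\Gm]$-module structure together with a choice of coset representatives (Proposition \ref{prop:CmodLT}), I would argue that $\ph$ carries the $\Gm$-grading of $\cL$ over to the $\Gm'$-grading of $\cL'$ up to the homomorphism on supports induced by $\chi$; this yields a group isomorphism $\Gm \to \Gm'$, which one shows extends to $\pe : \Lm \to \Lm'$ (using $\Lm/\Gm$ finite, Proposition \ref{prop:fincondLT}, and that $\Lm$, $\Lm'$ are free of the same rank). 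The residual discrepancy — the fact that $\ph(\cL_\al^\lm)$ may land in ${\cL'}_{\pr(\al)}^{\pe(\lm) + (\text{shift depending on }\al)}$ rather than exactly ${\cL'}_{\pr(\al)}^{\pe(\lm)}$ — is exactly the homomorphism $\ps : Q \to \Lm'$, and one must check it is additive in $\al$. For this, the concrete handle is the $\spl_2$-triples: for $\al\in\Dec$ the space $\cL_\al$ is a free $C$-module and, picking any nonzero homogeneous $e\in\cL_\al^\lm$, its image $\ph(e)$ is $\Lm'$-homogeneous of some degree; define $\ps(\al)$ via that degree minus $\pe(\lm)$. Additivity of $\ps$ on $\De$ follows from $[\cL_\al^\lm, \cL_\beta^\mu] \subseteq \cL_{\al+\beta}^{\lm+\mu}$ together with non-vanishing of suitable brackets (Lemma \ref{lem:domain}, as used in Lemma \ref{lem:inverse}), and one then extends $\ps$ from $\De$ to $Q = \spann_\bbZ(\De)$ by $\bbZ$-linearity, checking well-definedness via the relations among roots.

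The main obstacle I expect is precisely the bookkeeping that separates $\pe$ from $\ps$: showing that the ``twist'' by which $\ph$ fails to be a bi-isomorphism is genuinely a \emph{group homomorphism} $Q \to \Lm'$ (well-defined and additive), and that one can simultaneously arrange $\pe$ to be a homomorphism on all of $\Lm$ (not just on $\Gm$) compatibly. This is the step where the referee's noted gap presumably lived. The cleanest route is: (1) get $\pr$ from Lemma \ref{lem:formal}; (2) get the graded isomorphism $\chi$ of centroids and hence an isomorphism $\Gm \to \Gm'$; (3) for each $\al\in\De$ choose a nonzero $\Lm$-homogeneous $x_\al\in\cL_\al$ and record $\deg_{\Lm'}\ph(x_\al) - (\text{image under }\chi\text{ of }\deg_\Lm x_\al)$, using Lemma \ref{lem:inverse}/\ref{lem:domain} to see $\ph(x_\al)$ is homogeneous and that the recorded value is independent of the choice of $x_\al$; (4) verify additivity over root addition and extend to get $\ps: Q\to\Lm'$; (5) assemble $\pe$ from the $\Gm\to\Gm'$ map extended across coset representatives so that $\ph(\cL_\al^\lm) = {\cL'}_{\pr(\al)}^{\pe(\lm)+\ps(\al)}$ holds on the nose, using (LT4) to know $\supp_\Lm(\cL)$ generates $\Lm$. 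Once these five pieces fit together the verification that $\ph$ is an isotopy is formal.
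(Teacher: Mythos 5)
Your forward implication and your first step (getting $\pr$ from $\ph(\fh)=\fh'$ and the root-space decomposition, then using Lemma \ref{lem:inverse} to see that $\ph$ sends $\Lm$-homogeneous elements of $\cL_\al$, $\al\in\Dec$, to homogeneous elements) match the paper. But the route you propose for constructing $\pe$ has a genuine gap. You want to extract $\pe$ from the induced centroid isomorphism $\chi:C\to C'$ by showing $\chi$ is graded and then extending the resulting map $\Gm\to\Gm'$ to $\Lm\to\Lm'$. First, the theorem is stated for arbitrary centreless Lie tori, not fgc ones, so $\Lm/\Gm$ need not be finite and indeed $\Gm$ can be trivial (e.g.\ $\spl_{r+1}(\cQ(\zeta))$ with $\zeta$ not a root of unity has centroid $\base$); in that case the centroid carries no information about $\pe$ at all. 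Second, even in the fgc case an isomorphism between the finite-index subgroups $\Gm\le\Lm$ and $\Gm'\le\Lm'$ neither determines nor guarantees an extension to an isomorphism $\Lm\to\Lm'$ (already $2\bbZ\to 3\bbZ$ inside $\bbZ$ fails to extend). Third, the gradedness of $\chi$ is what the paper proves in Proposition \ref{prop:isotopyinv} \emph{assuming} $\ph$ is an isotopy; deducing it here from $\ph(\fh)=\fh'$ alone is essentially circular.

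The second, related gap is that the two claims you defer --- that $\deg_{\Lm'}\ph(x_\al)-\pe(\deg_\Lm x_\al)$ is independent of the choice of homogeneous $x_\al\in\cL_\al$, and that the resulting $\ps$ is additive on $\De$ --- are not consequences of Lemma \ref{lem:domain}/\ref{lem:inverse} alone; they are the actual content of the theorem. The paper's proof works entirely with the bijections $\rho_\al:\Lm_\al\to\Lmp_{\pr(\al)}$ defined by $\ph(\cL_\al^\lm)=\cLp_{\pr(\al)}^{\rho_\al(\lm)}$ and the reflection identity $\rho_{w_\al(\beta)}(\mu-\pab\lm)=\rho_\beta(\mu)-\pab\rho_\al(\lm)$ coming from Lemma \ref{lem:domain}. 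From this it first defines $\ps\in\Hom_\bbZ(Q,\Lm')$ by prescribing its values $\rho_{\al_i}(0)$ on a base and proving agreement with $\al\mapsto\rho_\al(0)$ on all of $\Deic$ via Weyl-group stability (this sidesteps your ``check well-definedness via relations among roots''). It then builds $\pe$ not from the centroid but from $\tau_\al(\lm)=\rho_\al(\lm)-\ps(\al)$: one shows $\tau_\al$ depends only on root length, restricts to the support set $S=\Lm_\gamma$ of a short root (which contains a $\bbZ$-basis of $\Lm$), defines $\pe$ on that basis, and uses the identity $\tau(\mu+2\lm)=\tau(\mu)+2\tau(\lm)$ together with torsion-freeness of $\Lm$ to conclude $\tau=\pe|_S$. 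This last step is precisely where the referee's gap lived, and it is the step your outline leaves unaddressed. You would need to replace your centroid argument with something of this kind for the proof to go through.
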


\begin{proof} The implication ``$\Rightarrow$'' is trivial.  To prove the reverse
implication, suppose that $\ph(\fh) = \fh'$.

We use the notation of
Section \ref{sec:cssLT} for $\cL$, and we
set
\[\Lm_\al = \set{\lm\in\Lm \suchthat \cL_\al^\lm \ne 0}\]
for $\al\in \Dec$.  We also use primed versions of
this notation for $\cL'$. Note  that if $\al\in \Dec$, then $\Lm_{-\al} = -\Lm_\al$
by LT2(ii).

Let  $\hat\ph : \fh^*\to (\fh')^*$ be the transpose of
$\ph^{-1}|_{\fh'} : \fh' \to \fh$.  Then, by \eqref{eq:Lroot},
$\ph(\cL_\al) = {\cL'}_{\hat \ph(\al)}$
for $\al\in \fh^*$.  So $\hat\ph(\De) = \De'$ and hence $\hat\ph(Q) = Q'$.
Let $\pr = \hat\ph|_Q : Q \to Q'$. Then
$\pr : Q \to Q'$ is a group isomorphism such that
$\pr(\De) = \De'$ (and hence also $\pr(\Dec) = {\De'}^\times$) and
\[\ph(\cL_\al) = {\cL'}_{\pr(\al)}\]
for $\al\in Q$.

Next let $\al\in\Dec$.  If $\lm\in \Lm_\al$, then
$0\ne e_\al^\lm\in \cL_\al$, $0\ne f_\al^\lm\in \cL_{-\al}$
and $[e_\al^\lm,f_\al^\lm]\in \fh$.
Thus, since $\ph(\fh) = \fh'$, we have
$0\ne \ph(e_\al^\lm)\in {\cL'}_{\pr(\al)}$, $0\ne \ph(f_\al^\lm)\in
{\cL'}_{-\pr(\al)}$
and $[\ph(e_\al^\lm),\ph(f_\al^\lm)]\in \fh'$.
So, by Lemma \ref{lem:inverse}, we have
$\ph(e_\al^\lm) \in \cLp_{\pr(\al)}^{\rho_\al(\lm)}$
and
$\ph(f_\al^\lm) \in \cLp_{-\pr(\al)}^{-\rho_\al(\lm)}$
for some
$\rho_\al(\lm) \in \Lmp_{\pr(\al)}$.  So counting dimensions, we have
$\ph(\cL_\al^\lm) = \cLp_{\pr(\al)}^{\rho_\al(\lm)}$ and
$\ph(\cL_{-\al}^{-\lm}) = \cLp_{-\pr(\al)}^{-\rho_\al(\lm)}$.  Since $\ph$
is an isomorphism, we have a bijection $\rho_\al: \Lm_\al \to \Lmp_{\pr(\al)}$
such that
\begin{equation}
\label{eq:char1}
\ph(\cL_\al^\lm) = \cLp_{\pr(\al)}^{\rho_\al(\lm)} \andd
\ph(\cL_{-\al}^{-\lm}) = \cLp_{-\pr(\al)}^{-\rho_\al(\lm)}
\end{equation}
for $\lm \in \Lm_\al$.

If $\al\in\Dec$ and $\lm\in \Lm_\al$, we have
$\ph(\cL_{-\al}^{-\lm}) = \cLp_{\pr(-\al)}^{\rho_{-\al}(-\lm)}$
since $-\lm \in -\Lm_\al = \Lm_{-\al}$.   Comparing this with the
second equation in \eqref{eq:char1}, we obtain
\begin{equation}
\label{eq:char1a}
\rho_{-\al}(-\lm) = -\rho_\al(\lm)
\end{equation}

We next claim that if $\al,\beta\in \Dec$, $\lm\in\Lm_\al$ and $\mu\in \Lm_\beta$,
we have\footnote{The equalities \eqref{eq:char1b} and \eqref{eq:char2b} are well-known (see for example \cite[\S 1.1]{ABFP2} and the earlier references there), but they arise naturally here so we give the arguments.}
\begin{equation}
\label{eq:char1b}\mu - \pab \lm \in \Lm_{w_\al(\beta)}
\end{equation}
and
\begin{equation}
\label{eq:char2}
\rho_{w_\al(\beta)}(\mu - \pab \lm) = \rho_\beta(\mu) - \pab \rho_\al(\lm).
\end{equation}
Indeed, this is clear if $\pab = 0$.  Next, suppose
$\pab < 0$. Then, by Lemma \ref{lem:domain}, we have
$0 \ne \ad(\cL_\al^\lm)^{-\pab} \cL_\beta^\mu \subseteq \cL_{w_\al(\beta)}^{\mu-\pab \lm}$,
which implies  \eqref{eq:char1b}.  Moreover, counting dimensions, we see that
$\cL_{w_\al(\beta)}^{\mu-\pab \lm} = \ad(\cL_\al^\lm)^{-\pab} \cL_\beta^\mu$.
Applying $\ph$ we get
\[\cL_{\ph_r(w_\al(\beta))}^{\rho_{w_\al(\beta)}(\mu-\pab \lm)}
=\ad(\cL_{\ph_r(\al)}^{\rho_\al(\lm)})^{-\pab} \cL_{\ph_r(\beta)}^{\rho_\beta(\mu)},\]
which implies  \eqref{eq:char2}.
Finally, if $\pab > 0$, then $\langle \beta,(-\al)^\vee\rangle < 0$
and $-\lm \in -\Lm_\al = \Lm_{-\al}$.  Hence, by our previous case, we have
\eqref{eq:char1b} and \eqref{eq:char2} with $\al$ replaced by $-\al$ and
$\lm$ replaced by $-\lm$, which gives \eqref{eq:char1b} and \eqref{eq:char2}
for $\al$ and $\lm$ using \eqref{eq:char1a}.  So we have the  claim.

To simplify notation, we now denote the reduced irreducible  finite  root system $\Dei$
by $E$. Let $W$ denote the Weyl group of $\De$ (= the Weyl group of $E$).
If $\al\in E^\times$, then $0\in \Lm_\al$ by LT(i).  So by \eqref{eq:char1b} (with $\lm = 0$),
we see that $\Lm_\beta \subseteq \Lm_{w_\al(\beta)}$ for $\al\in E^\times$, $\beta\in \Dec$.  Hence
$\Lm_\beta = \Lm_{w(\beta)}$
for $\beta\in \Dec$ and $w\in W$.
Thus
\begin{equation}
\label{eq:char2b} \Lm_\al = \Lm_\beta
\end{equation}
if $\al,\beta\in \Dec$ have the same  length.

Define $\sg: E^\times \to \Lm'$ by $\sg(\al) = \rho_\al(0)$.  Putting
$\lm= \mu = 0$ in \eqref{eq:char2}, we obtain
\begin{equation}
\label{eq:char3}
\sg(w_\al(\beta)) = \sg(\beta) - \pab \sg(\al)
\end{equation}
for  $\al,\beta\in E^\times$.
Let $\set{\al_1,\dots,\al_r}$ be a base for the root system $\De$, and choose
$\ps \in \Hom_\bbZ(Q,\Lm')$ such that $\ps(\al_i) = \sg(\al_i)$ for
$1\le i \le r$.  Define $\delta : E^\times \to \Lm$ by
$\delta(\al) = \sg(\al) - \ps(\al)$.  Then, since $\ps$ is $\bbZ$-linear, it follows
from \eqref{eq:char3} that
\begin{equation}
\label{eq:char4}
\delta(w_\al(\beta)) = \delta(\beta) - \pab \delta(\al)
\end{equation}
for   $\al,\beta\in E^\times$.
Now the set $X:=\set{\al\in E^\times \suchthat \delta(\al) = 0}$ contains
$\set{\al_1,\dots,\al_r}$; and so, by \eqref{eq:char4},
$X$ is stable under the action  of $W$.
Since $E$ is reduced, this implies that $X = E^\times$, so
$\sg(\al) = \ps(\al)$
for $\al\in E^\times$.  Hence
\begin{equation*}
\rho_{\al}(0) = \ps(\al)
\end{equation*}
for $\al\in E^\times$.

Next for $\al\in E^\times$, we define  $\tau_\al : \Lm_\al \to \Lm'$ by
\begin{equation}
\label{eq:char5}
\tau_\al(\lm) = \rho_\al(\lm) - \ps(\al).
\end{equation}
Observe that $\tau_\al(0) = 0$.

Suppose that  $\al,\beta\in E^\times$.
Then, since $\ps$ is $\bbZ$-linear,  we have $\ps(w_\al(\beta)) = \ps(\beta) - \pab \ps(\al)$.
Subtracting this from \eqref{eq:char2} we see that
\begin{equation}
\label{eq:char6}
\tau_{w_\al(\beta)}(\mu - \pab \lm) = \tau_\beta(\mu) - \pab \tau_\al(\lm)
\end{equation}
for $\lm\in \Lm_\al$, $\mu \in \Lm_\beta$.
Taking $\lm = 0$, we have $\tau_{w_\al(\beta)}(\mu) = \tau_\beta(\mu)$
for $\mu\in \Lm_\beta$.
Hence
\begin{equation}
\label{eq:char8}
\tau_{w(\beta)}= \tau_\beta
\end{equation}
for $\beta\in E^\times$ and $w\in W$.

Now fix a short root $\gamma$ in $E^\times$,
and let  $S = \Lm_\gamma$, which does not depend on the choice of $\gm$ by
\eqref{eq:char2b}.  It is known that
$0\in S$, $-S = S$, $S+2\Lm \subseteq S$,
$\Lm_\al \subseteq S$ for $\al \in E^\times$ and
$S$ generates the group $\Lm$
(see for example \cite[Lemma 1.1.12]{ABFP2}).
Hence $S$ contains a $\bbZ$-basis $\set{\nu_1,\dots,\nu_n}$ for $\Lm$
\cite[Prop.~II.1.11]{AABGP}.

We define $\tau : S \to \Lm'$ by $\tau = \tau_\gamma$,
which does not depend on the choice of $\gamma$
by \eqref{eq:char8}.
We claim next that
\begin{equation}
\label{eq:char9}
\tau_\al = \tau|_{\Lm_\al}
\end{equation}
for $\al$ in $E^\times$.
Indeed, if $\al$ has the same length as $\gamma$, we already know that \eqref{eq:char9} holds.
So we can assume that $\al$ is long and
$\langle \gamma, \al^\vee\rangle = -1$.  But then taking
$\beta= \gamma$ and $\mu = 0$ in \eqref{eq:char6},
we see that
$\tau_{w_\al(\gamma)}(\lm) = \tau_\al(\lm)$ for $\lm\in \Lm_\al$,
and so $\tau(\lm) = \tau_\al(\lm)$ for $\lm\in \Lm_\al$.

Next taking $\al = \gamma$ and $\beta = -\gamma$ in \eqref{eq:char6},
we see using \eqref{eq:char8} that
\begin{equation*}
\tau(\mu + 2\lm) = \tau(\mu) +2\tau(\lm)
\end{equation*}
for $\mu,\lm\in S$.

Define $\pe\in\Hom(\Lm,\Lm')$ by $\pe(\nu_i) = \tau(\nu_i)$
for $1\le i \le n$.  Further, define $\ep : S \to \Lm'$
by $\ep(\lm) = \tau(\lm)  - \pe(\lm)$.
Then $\ep(\nu_i)= 0$ for $1\le i \le n$ and
\begin{equation}
\label{eq:char10}
\ep(\mu + 2\lm) = \ep(\mu) +2\ep(\lm)
\end{equation}
for $\mu,\lm\in S$.  So, taking $\mu = -\lm$,
we have $\ep(-\lm) = -\ep(\lm)$ for $\lm\in S$.
Hence  $\ep(\pm \nu_i) = 0$ for $1\le i \le n$.

It follows by induction on $k$ using \eqref{eq:char10} that
$\ep(\mu + 2\sum_{i=1}^k\lm_i) = \ep(\mu) +2\sum_{i=1}^k\ep(\lm_i)$
for $\mu,\lm_1,\dots,\lm_k\in S$.  But each $\lm\in S$
is the sum of elements from $\set{\pm\nu_1,\dots,\pm\nu_n}$
and $\ep$ vanishes on the elements of this set. So we have
$\ep(\mu+2\lm) = \ep(\mu)$ for $\mu,\lm\in S$.
Therefore by \eqref{eq:char10}, $2\ep(\lm) = 0$ for $\lm\in S$,
and hence, since $\Lm$ has no 2-torsion, $\ep = 0$.
So $\tau(\lm) = \pe(\lm)$ for $\lm\in S$.
Thus, by \eqref{eq:char5} and \eqref{eq:char9}, we have
\begin{equation}
\label{eq:char11}
\rho_\al(\lm) = \pe(\lm) + \ps(\al).
\end{equation}
for $\al\in E^\times$, $\lm\in \Lm_\al$.
So by \eqref{eq:char1}, we have
\begin{equation}
\label{eq:char12}
\ph(\cL_\al^\lm) \subseteq \cLp_{\pr(\al)}^{\pe(\lm) + \ps(\al)}
\end{equation}
for  $\al\in E^\times$, $\lm\in \Lm_\al$.
But, by Lemma \ref{lem:generate},
every element of $\cL$ is the sum of products of elements
chosen from $\cL_\al^\lm$, $\al\in E^\times$, $\lm\in \Lm$.
So \eqref{eq:char12} holds for $\al\in Q$, $\lm\in\Lm$.

Finally, the isomorphism $\ph^{-1} : \cL' \to \cL$ satisfies an inclusion of exactly
the same form as \eqref{eq:char12}.  Using this it is easy to check that
$\pe : \Lm \to \Lm'$ is an isomorphism and hence
that equality holds in \eqref{eq:char11}
for $\al\in Q$, $\lm \in \Lm$.  We leave these arguments to the reader.
\end{proof}

\section{The structure of fgc centreless Lie tori}
\label{sec:structure}

For the rest of the article \emph{we assume that   $\base$ is algebraically closed}.

In this section,  we recall the structure theorems for fgc
centreless Lie tori. We combine these results into one theorem, which states that any fgc centreless Lie torus is either classical or exceptional.

Classical Lie tori and, in several cases, exceptional Lie tori are constructed from associative tori.  So we begin the section with a discussion of these graded algebras.

\myhead{Associative tori}
Recall \cite{Y1}  that an \emph{associative $\Lm$-torus}  (or simply
an associative torus) is a $\Lm$-graded unital associative algebra $\cA = \bigoplus_{\lm\in\Lm} \cA^\lm$
such that every $\cA^\lm$ is spanned by an invertible element for $\lm\in\Lm$.
(Equivalently, $\cA$ is a \emph{twisted group algebra} of $\Lm$ over $\base$.)
In that case, we call the rank of the group
$\Lm$  the \emph{nullity} of~$\cA$.

It is easy to check that if
$\cA$ is an associative $\Lm$-torus, $\cA'$ is an associative  $\Lm'$-torus,
and $\ph: \cA \to \cA'$ is an algebra isomorphism,
there exists a group isomorphism $\ph_\text{gr} : \Lm \to \Lm'$ such that
$\ph (\cA^\lm ) = {\cA'}^{\ph_\text{gr}(\lm)}$ for
$\lm\in \Lm$.  Thus it is not necessary to distinguish between isomorphism and
isograded-isomorphism for associative tori.

If $\cA$ is an associative $\Lm$-torus, we set $\Gm(\cA) := \supp_\Lm(Z(\cA))$.   Then
$\Gm(\cA)$ is a subgroup of $\Lm$ and $Z(\cA)$ is a commutative associative
$\Gm(\cA)$-torus.

It is  easily checked (and well-known) that any associative torus $\cA$
is a domain and hence prime (as a $\base$-algebra or equivalently as a ring).

The simplest example of an fgc associative torus
is the $\bbZ^n$-associative torus
$R_n = F[t_1^{\pm1},\dots,t_n^{\pm1}]$ with its natural $\bbZ^n$-grading.
(If $n=0$, $R_n = \base$ is graded by $\bbZ^0 = \set{0}$.)
Another important example is
obtained as follows.
Let $\zeta \in \base^\times$ and let
$\cQ(\zeta)$ be the algebra presented by the generators
$\qg_1^{\pm1},\qg_2^{\pm1}$ subject to the inverse relations
$\qg_i \qg_i^{-1} =  \qg_i^{-1} \qg_i = 1$, $i=1,2$, and the relation $\qg_1 \qg_2 = \zeta \qg_2 \qg_1$.
Then $\cQ(\zeta)$, with its natural $\bbZ^2$-grading,  is an associative $\bbZ^2$-torus which is fgc
if and only if $\zeta$ is a root of unity. We call $\cQ(\zeta)$ the \emph{quantum torus}
determined by~$\zeta$.

If $\cA_i$ is an associative $\Lm_i$-torus for $1\le i \le k$, then
$\cA_1\otimes \dots \otimes \cA_k$ is an associative $\Lm$-torus with
$\Lm =\Lm_1\oplus\dots\oplus\Lm_k$.  Moreover,
\[Z(\cA_1\otimes \dots \otimes \cA_k) = Z(\cA_1)\otimes \dots \otimes Z(\cA_k),\]
and $\cA_1\otimes \dots \otimes \cA_k$ is fgc if and only if each
$\cA_i$ is fgc.

Any fgc associative torus is isomorphic to a
tensor product
\begin{equation}
\label{eq:tensor}
\cA_1\otimes \dots \otimes \cA_k \otimes R_q,
\end{equation}
where  $k\ge 0$, $q\ge 0$ and $\cA_i \simeq \cQ(\zeta_i)$
with $\zeta_i$ a root of unity $\ne 1$ in $\base^\times$ for $i=1,\dots,k$.
Moreover, the $\zeta_i$'s can be  chosen satisfying further restrictions, and under
those restrictions Neeb has given
necessary and sufficient conditions for isomorphism  (or equivalently isograded-isomorphism) of two such
tensor products  \cite[Thm. 4.5]{Neeb} (although a subtle
point about determinants of certain integral matrices
is not resolved---see \cite[Conjecture 4.2]{Neeb}).

\myhead{Associative tori with involution}
An \emph{associative $\Lm$-torus with involution} is a $\Lm$-graded associative algebra with involution
$(\cA,-)$ such that $\cA$ is an associative  $\Lm$-torus.

If $(\cA,-)$ is an associative $\Lm$-torus with involution, we use the notation $\Gm(\cA,-) := \supp_\Lm(Z(\cA,-))$.  Then
$\Gm(\cA,-)$ is a subgroup of $\Lm$ and $Z(\cA,-)$ is a commutative associative
$\Gm(\cA,-)$-torus.  Also  we have
\begin{equation} \label{eq:Zi1}
Z(\cA) =  Z(\cA,-)\oplus (Z(\cA)\cap \cA_-),
\end{equation}
and we say that $(\cA,-)$ is of \emph{first kind} (resp.~\emph{second kind}) if
$Z(\cA) = Z(\cA,-)$ (resp. $Z(\cA) \ne Z(\cA,-)$).  If $(\cA,-)$ is of second kind,
then there exists a nonzero homogeneous element $s_0\in Z(\cA)\cap \cA_-$, and for any such $s_0$ we have
\begin{equation} \label{eq:Zi2} Z(\cA)\cap \cA_-  = s_0 Z(\cA,-) \andd \cA_- = s_0 \cA_+.
\end{equation}
Hence
\begin{equation} \label{eq:Zi3}
[\Gm(\cA) : \Gm(\cA,-)] = 1 \text{ or } 2
\end{equation}
according as
$(\cA,-)$ is of first or  second kind.

Four basic examples of associative tori with involution are
\begin{equation*}
%\label{eq:associnv}
(R_n,1),\quad (R_1,\natural),\quad (\cQ(-1),\natural) \andd (\cQ(-1),*),
\end{equation*}
graded by $\bbZ^n$, $\bbZ^1$, $\bbZ^2$ and $\bbZ^2$ respectively,
where the \emph{standard involution} $\natural$ of $R_1$ anti-fixes the generator $\qg_1$
($\qg_1^\natural = -\qg_1$); the \emph{standard involution} $\natural$ of $\cQ(-1)$
anti-fixes the generators $\qg_1$ and $\qg_2$;
and the \emph{reversal involution} $*$ of $\cQ(-1)$ fixes the
generators $\qg_1$ and $\qg_2$.\footnote{The term reversal involution
is used since $*$ reverses the order of products of the generators
$\qg_1^{\pm 1}$, $\qg_2^{\pm 1}$.  So $(\qg_1^{i_1}\qg_2^{i_2})^* = \qg_2^{i_2}\qg_1^{i_2} =
(-1)^{i_1i_2} \qg_1^{i_1}\qg_2^{i_2}$ for $i_1,i_2\in\bbZ$.}

If $(\cA_i,-)$ is an associative $\Lm_i$-torus with involution for $1\le i \le k$,  then
$(\cA_1,-)\otimes \dots \otimes (\cA_k,-)$ is an associative $\Lm$-torus with involution, where
$\Lm =\Lm_1\oplus\dots\oplus\Lm_k$; and we have
\[Z((\cA_1,-)\otimes \dots \otimes (\cA_k,-)) = Z(\cA_1,-)\otimes \dots \otimes Z(\cA_k,-).\]

Any associative torus with involution $(\cA,-)$
is  isomorphic (or equivalently iso\-graded-iso\-morphic) to a unique tensor product of the form
\begin{equation}
\label{eq:tensori}
(\cA_1,-) \otimes \dots \otimes (\cA_k,-) \otimes (\cA_{k+1},-) \otimes (R_q,1),
\end{equation}
where $k\ge 0$, $q\ge 0$, $(\cA_i,-) \simeq (\cQ(-1),\natural)$ for
$i=1,\dots,k$, and $(\cA_{k+1},-)$ is isomorphic to one of the associative tori
with involution
$(\base,1)$, $(R_1,\natural)$ or $(\cQ(-1),*)$
(see \cite[Thm.~2.7]{Y2} or \cite[Remark 5.20]{AFY}).
In that case
$(\cA,-)$ is of second kind if and only if
$(\cA_{k+1},-) \simeq (R_1,\natural)$.

We will use the following  lemmas about associative tori.

\begin{lemma} \label{lem:A+A+}
Suppose that $(\cA,-)$ is an associative torus with involution.
If $(\cA,-)$ is not isomorphic to
$(\cQ(-1),\natural)\otimes (R_q,1)$ for $q\ge 0$, then $[\cA_-,\cA_-] \subseteq \cA_+\cA_+$.
\end{lemma}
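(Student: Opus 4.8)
The plan is to use the explicit tensor-product decomposition \eqref{eq:tensori} to reduce to a handful of small cases. Write $(\cA,-) \simeq (\cA_1,-) \otimes \dots \otimes (\cA_k,-) \otimes (\cA_{k+1},-) \otimes (R_q,1)$ with each $(\cA_i,-) \simeq (\cQ(-1),\natural)$ for $i \le k$ and $(\cA_{k+1},-)$ one of $(\base,1)$, $(R_1,\natural)$, $(\cQ(-1),*)$. Since $(R_q,1)$ contributes only symmetric central elements, it is harmless: any element of $\cA_-$ lies in $\cB_- \otimes R_q$, where $\cB = \cA_1 \otimes \dots \otimes \cA_{k+1}$, and products in $R_q$ land in $R_q = (R_q)_+$, so it suffices to prove $[\cB_-,\cB_-] \subseteq \cB_+\cB_+$. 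Thus I may assume $q = 0$ and $\cA = \cA_1 \otimes \dots \otimes \cA_{k+1}$.

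Next I would observe that the hypothesis (that $(\cA,-)$ is not $(\cQ(-1),\natural)\otimes(R_q,1)$) means that the factor $(\cA_{k+1},-)$ is \emph{not} $(\base,1)$, or else $k \ge 2$ — wait, more precisely, if $(\cA_{k+1},-) = (\base,1)$ then $\cA = \cQ(-1)^{\otimes k}$ with the tensor-product standard involution, and the excluded case is exactly $k = 1$; so in the non-excluded situation either $(\cA_{k+1},-) \in \{(R_1,\natural),(\cQ(-1),*)\}$, or $(\cA_{k+1},-) = (\base,1)$ with $k \ge 2$ (or $k=0$, which is trivial since then $\cA_- = 0$). In every admissible case I want to produce, for each $\Lm$-homogeneous element $u \in \cA_-$ (these span $\cA_-$), a factorization $u = v\cdot w$ with $v,w \in \cA_+$ homogeneous; then $[\cA_-,\cA_-] \subseteq [\cA,\cA] \subseteq \cA_+\cA_+$ follows once one also checks $[\cA_-,\cA_-]\subseteq \cA_+$ (immediate: the commutator of two anti-fixed elements is fixed). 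Actually the cleaner route: show $\cA_- \subseteq \cA_+\cA_+$, so that $[\cA_-,\cA_-] \subseteq [\cA_+\cA_+,\cA_+\cA_+] \subseteq \cA_+$ and also lies in $\cA_+\cA_+$ — hmm, that last inclusion needs a moment's thought; the safe statement to prove is just $\cA_- \subseteq \cA_+ \cA_+$, which immediately gives $[\cA_-,\cA_-]\subseteq \cA_+\cA_+\cdot\cA_+\cA_+ \subseteq \cA_+\cA_+$ since $\cA_+\cA_+$ is closed under the product? No — $\cA_+\cA_+$ need not be a subalgebra. The correct reduction is: if $\cA_-\subseteq\cA_+\cA_+$ then for $a,b\in\cA_-$ write $a = \sum v_iw_i$, $b\in\cA_-$; then $[a,b] = \sum[v_iw_i,b] = \sum v_i[w_i,b] + [v_i,b]w_i$, and $[w_i,b]\in\cA_+$ (commutator of a symmetric and an antisymmetric element is antisymmetric — so this is wrong too). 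Let me instead aim directly at the span statement: it suffices to show every homogeneous $u \in \cA_-$ is a product of two homogeneous elements of $\cA_+$, because then a homogeneous $[u_1,u_2]$ with $u_i\in\cA_-$ equals $v_1w_1v_2w_2 - v_2w_2v_1w_1$ where each factor is in $\cA_+$ and each monomial is a product of four symmetric elements; then using that in a $\Lm$-torus any homogeneous element is invertible and symmetric-or-antisymmetric, a product of four symmetric homogeneous elements, being homogeneous, is $\pm$ symmetric, and the sign works out to symmetric since the whole commutator is symmetric — and a single homogeneous symmetric element $v_1w_1v_2w_2 = (v_1w_1)(v_2w_2)$ is manifestly in $\cA_+\cA_+$. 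So the whole lemma reduces to the single claim: \emph{every $\Lm$-homogeneous element of $\cA_-$ is a product of two $\Lm$-homogeneous elements of $\cA_+$.}

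Finally I would verify that claim case by case on $(\cA_{k+1},-)$, using homogeneity and the explicit generators. In $(\cQ(-1),\natural)$ the homogeneous symmetric/antisymmetric decomposition is governed by a bicharacter on $\Lm$; a homogeneous element of degree $\lm$ is symmetric or antisymmetric according to the value of an explicit sign $\epsilon(\lm) \in \{\pm1\}$, and for the tensor product these signs multiply. The point is that the set $\{\lm : \epsilon(\lm) = 1\}$ is a subgroup of $\Lm$ of index $1$ or $2$, and when it has index $2$ it still \emph{generates} $\Lm$ (it contains $2\Lm$), so any antisymmetric homogeneous degree is a \emph{sum} — no, a product of homogeneous pieces corresponds to a sum of degrees, and I need an antisymmetric degree $\mu$ to be written as $\lm_1 + \lm_2$ with both $\lm_i$ symmetric; this is possible iff $\mu \in \Lm_+ + \Lm_+ = \Lm_+$ when $\Lm_+$ is a subgroup — which fails exactly when $\mu\notin\Lm_+$. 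So the single-factor cases genuinely can fail, and this is precisely why $(\cQ(-1),\natural)\otimes(R_q,1)$ is excluded: there $\Lm_+$ is the index-$2$ subgroup $\{(i_1,i_2):i_1+i_2\ \text{even}\}\oplus\bbZ^q$ and the antisymmetric degrees are not in $\Lm_+$, nor expressible via two symmetric homogeneous factors, nor — one checks — via products of symmetric \emph{non}-homogeneous elements either. In all admissible cases, the extra factor $(\cA_{k+1},-)$ (being $(R_1,\natural)$, or $(\cQ(-1),*)$, or $(\base,1)$ with $k\ge2$) supplies enough symmetric homogeneous elements: e.g.\ in $(R_1,\natural)$ the element $\qg_1^2$ is symmetric of degree $2$ and $\qg_1\cdot\qg_1$ realizes degree $2$; in $(\cQ(-1),*)$ the reversal involution fixes $\qg_1,\qg_2$, so $\Lm_+$ is much larger and the relevant antisymmetric degrees are differences of two symmetric ones; when $k\ge2$ and $(\cA_{k+1},-)=(\base,1)$, an antisymmetric degree is antisymmetric in at least one $\cQ(-1)$-factor, and one trades off against a second $\cQ(-1)$-factor. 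I expect the main obstacle to be the bookkeeping of these sign bicharacters across the tensor factors and confirming in the one borderline shape ($(\cA_{k+1},-)=(\base,1)$, $k=1$) that $[\cA_-,\cA_-]\not\subseteq\cA_+\cA_+$ so that the hypothesis is sharp — but for the lemma as stated only the admissible cases matter, and there the construction of the two symmetric homogeneous factors is explicit and routine.
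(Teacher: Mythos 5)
Your overall plan---decompose $(\cA,-)$ via \eqref{eq:tensori} and case-check on the factor $(\cA_{k+1},-)$---is the same as the paper's, and your identification of the excluded case is correct, but the reduction at the heart of your argument is broken. First, the sign is wrong: for $a,b\in\cA_-$ one has $\overline{ab}=\bar b\,\bar a=(-b)(-a)=ba$, so $\overline{[a,b]}=-[a,b]$ and $[\cA_-,\cA_-]\subseteq\cA_-$, not $\cA_+$ as you assert twice. Second, the step ``$v_1w_1v_2w_2=(v_1w_1)(v_2w_2)$ is manifestly in $\cA_+\cA_+$'' is false: those two factors are $u_1$ and $u_2$, which lie in $\cA_-$. (Ironically, the correct sign rescues the simpler reduction you abandoned: since $[\cA_-,\cA_-]\subseteq\cA_-$, the inclusion $\cA_-\subseteq\cA_+\cA_+$ would immediately give the lemma.) More seriously, the claim you reduce everything to---every homogeneous element of $\cA_-$ is a product of two homogeneous elements of $\cA_+$, equivalently $\supp_\Lm(\cA_-)\subseteq\supp_\Lm(\cA_+)+\supp_\Lm(\cA_+)$---is \emph{false} in an admissible case: for $(\cA,-)=(\cQ(-1),\natural)\ot(R_1,\natural)$ the central antisymmetric element $1\ot \qg_1$ has degree $(0,0,1)$, and a short parity check (a symmetric degree has either both $\cQ(-1)$-exponents even and the $R_1$-exponent even, or not both even and the $R_1$-exponent odd) shows $(0,0,1)$ is not a sum of two symmetric degrees. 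So no amount of bookkeeping with sign bicharacters will establish your reduced claim in the second-kind case.

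The paper splits the cases differently to avoid exactly this. When $(\cA_{k+1},-)\simeq(R_1,\natural)$ (second kind) it uses the homogeneous central unit $s_0\in Z(\cA)\cap\cA_-$ of \eqref{eq:Zi2}: $[\cA_-,\cA_-]=[s_0\cA_-,s_0^{-1}\cA_-]\subseteq(s_0\cA_-)(s_0^{-1}\cA_-)+(s_0^{-1}\cA_-)(s_0\cA_-)\subseteq\cA_+\cA_+$. When $k=0$ it observes that $[\cA_-,\cA_-]=0$. In the remaining admissible cases it proves the much stronger statement $\cA_+\cA_+=\cA_-\cA_-=\cA$ by induction on $k$, peeling off one $(\cQ(-1),\natural)$ factor $\cB$ and using $\cA_+\cA_+\supseteq(\cB_-\ot\cC_-)(\cB_-\ot\cC_-)=\cB_-\cB_-\ot\cC_-\cC_-=\cA$. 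To salvage your approach you would need to (i) fix the sign, (ii) treat the second-kind case by a separate device such as $s_0$, since your factorization claim fails there, and (iii) actually carry out the verification you label ``explicit and routine''---that verification is the content of the lemma.
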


\begin{proof} Now  $(\cA,-)$ is isomorphic to an associative torus with involution of the form
\eqref{eq:tensori}.  If  $(\cA_{k+1},-) \simeq (R_1,\natural)$, then  $(\cA,-)$ is of second kind,
and choosing $s_0$ as in \eqref{eq:Zi2}, we have
$[\cA_-,\cA_-] = [s_0 \cA_-,s_0^{-1} \cA_-] \subseteq \cA_+ \cA_+$.  Also, if
$k=0$, then $[\cA_-,\cA_-]= 0$.

To complete the proof we assume that $k\ge 1$,
$(\cA_{k+1},-) \simeq (\base,1)$ or $(\cQ(-1),*)$,
and, if $k=1$,  $(\cA_{k+1},-) \simeq (\cQ(-1),*)$.  We show by induction that
\begin{equation}
\label{eq:induction}
\cA_- \cA_- = \cA \andd \cA_+ \cA_+ = \cA.
\end{equation}
First, if $k=1$,  then $(\cA,-) \simeq (\cQ(-1),\natural) \ot (\cQ(-1),*)\ot R_q$
and \eqref{eq:induction} is easily checked. Suppose next that $k\ge 2$.
When $(\cA,-) \simeq (\cQ(-1),\natural) \ot (\cQ(-1),\natural)\ot R_q$,
\eqref{eq:induction} is again easily checked.
Otherwise, we can identify $(\cA,-) = (\cB,-) \ot (\cC,-)$, where
$(\cB_,-) \simeq (\cQ(-1),\natural)$ and $(\cC,-)$ is of the form needed to apply our induction hypothesis.
Thus, $\cA_+ \cA_+ \supseteq (\cB_-\ot \cC_-) (\cB_-\ot \cC_-) = \cB_- \cB_- \ot \cC_-\cC_- = \cB\ot \cC = \cA$
and  $\cA_- \cA_- \supseteq (\cB_-\ot \cC_+) (\cB_-\ot \cC_+) = \cB_- \cB_- \ot \cC_+\cC_+ = \cB\ot \cC = \cA$.
\end{proof}

\begin{lemma} \
\label{lem:freemodule}
\begin{itemize}
\item[(i)] Suppose that  $\cA$ is an fgc associative $\Lm$-torus.
Then $[\Lm:\Gm(\cA)]$ is finite.  Further, if
$\cX$ is a graded $Z(\cA)$-submodule of $\cA$,
then $\cX$ is a free $Z(\cA)$-module of rank $\le [\Lm:\Gm(\cA)]$, with equality
holding if $\cX = \cA$.
\item[(ii)] Suppose that $(\cA,-)$ is an associative $\Lm$-torus with involution.
Then $\cA$ is fgc and $[\Lm:\Gm(\cA,-)]$ is finite. Further, if
$\cX$ is a graded $Z(\cA,-)$-submodule of $\cA$,
then $\cX$ is a free $Z(\cA,-)$-module of rank $\le [\Lm:\Gm(\cA,-)]$, with equality
holding if $\cX = \cA$.
\end{itemize}
\end{lemma}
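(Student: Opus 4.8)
The plan is to isolate, once, the module-theoretic mechanism underlying both parts. In an associative $\Lm$-torus $\cA$ every nonzero homogeneous element is invertible. Hence, if $S\subseteq\cA$ is a graded subalgebra whose support $\supp_\Lm(S)$ is a subgroup $\Gm$ of $\Lm$ with each $S^\gm$, $\gm\in\Gm$, a line in $\cA^\gm$ (and so spanned by an element invertible in $\cA$), and if $\cX$ is any graded $S$-submodule of $\cA$, then for $\gm\in\Gm$ and $\lm\in\supp_\Lm(\cX)$ one has $\cX^{\gm+\lm}=S^\gm\cX^\lm\ne 0$, since $S^\gm\cX^\lm$ is a nonzero subspace of the one-dimensional space $\cA^{\gm+\lm}$. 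Thus $\supp_\Lm(\cX)$ is a union of cosets of $\Gm$. Choosing a representative $\lm_i$ of each such coset and $u_i$ spanning $\cA^{\lm_i}$ (so $u_i$ is invertible), one gets $Su_i=\bigoplus_{\gm\in\Gm}\cA^{\gm+\lm_i}$, the sum of the homogeneous components of $\cX$ in the coset $\lm_i+\Gm$; linear independence of the $u_i$ over $S$ is immediate by comparing degrees and using invertibility, so $\{u_i\}_{i\in I}$ is a free $S$-basis of $\cX$. Consequently $\cX$ is free over $S$ of rank $\card(I)\le[\Lm:\Gm]$, with equality exactly when $\supp_\Lm(\cX)=\Lm$, i.e.\ when $\cX=\cA$.

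For part~(i) I would apply this with $S=Z(\cA)$ and $\Gm=\Gm(\cA)$, the hypothesis on the $S^\gm$ being clear since $Z(\cA)$ is an associative $\Gm(\cA)$-torus. The one thing to establish first is that $[\Lm:\Gm(\cA)]$ is finite: as $\cA$ is fgc it admits finitely many homogeneous generators as a $Z(\cA)$-module, of degrees $\mu_1,\dots,\mu_s$ say, so $\cA=\sum_{j}Z(\cA)\cA^{\mu_j}\subseteq\bigoplus_{j}\bigoplus_{\gm\in\Gm(\cA)}\cA^{\gm+\mu_j}$; since $\cA^\lm\ne 0$ for every $\lm\in\Lm$, this forces $\Lm=\bigcup_j(\mu_j+\Gm(\cA))$, hence $[\Lm:\Gm(\cA)]\le s$. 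Both assertions of~(i) then follow from the mechanism above.

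For part~(ii) I would first reduce the two finiteness claims to the fgc-ness of $\cA$. By the structure theorem \eqref{eq:tensori}, $(\cA,-)$ is isograded-isomorphic to a tensor product of copies of $(\cQ(-1),\natural)$, one factor from among $(\base,1)$, $(R_1,\natural)$, $(\cQ(-1),*)$, and a factor $(R_q,1)$; each of these associative tori is fgc, and a tensor product of fgc associative tori is fgc, so $\cA$ is fgc. Hence $[\Lm:\Gm(\cA)]$ is finite by~(i), and therefore $[\Lm:\Gm(\cA,-)]=[\Lm:\Gm(\cA)]\cdot[\Gm(\cA):\Gm(\cA,-)]$ is finite by \eqref{eq:Zi3}. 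The remaining assertions of~(ii) now follow by applying the mechanism verbatim with $S=Z(\cA,-)$ and $\Gm=\Gm(\cA,-)$; the hypothesis on the $S^\gm$ holds because for $\gm\in\Gm(\cA,-)$ the line $Z(\cA,-)^\gm$ equals $\cA^\gm$, hence is spanned by an element invertible in $\cA$.

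The only non-formal ingredient is the finiteness of the coset indices: in~(i) it is the short ``fgc $\Rightarrow$ finite index'' argument above, and in~(ii) it is the fgc-ness of $\cA$, for which I would invoke \eqref{eq:tensori}. If one prefers to bypass the classification here, one can argue directly that $2\Lm\subseteq\Gm(\cA,-)$: writing $\overline{u_\lm}=c_\lm u_\lm$ (legitimate since a graded involution is degree-preserving), period $2$ of the involution forces $c_\lm=\pm 1$, and comparing $\overline{u_\lm u_\mu}$ computed in two ways gives $c_{\lm+\mu}=c_\lm c_\mu\,\eta(\lm,\mu)$, where $\eta(\lm,\mu)\in\base^\times$ is the commutation factor defined by $u_\lm u_\mu=\eta(\lm,\mu)u_\mu u_\lm$; hence $\eta$ is $\{\pm 1\}$-valued, so $\eta(2\lm,\mu)=\eta(\lm,\mu)^2=1$ for all $\mu$ (whence $u_{2\lm}$ is central and $2\Lm\subseteq\Gm(\cA)$), while $c_{2\lm}=c_\lm^{2}\,\eta(\lm,\lm)=1$ shows $u_{2\lm}\in\cA_+$. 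Thus $2\Lm\subseteq Z(\cA)\cap\cA_+$, which has support $\Gm(\cA,-)$, and since $\Lm$ is free of finite rank, $[\Lm:2\Lm]<\infty$; everything else then proceeds as above.
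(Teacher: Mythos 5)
Your proof is correct and follows essentially the same route as the paper's: the support of a graded submodule is a union of cosets of the relevant subgroup, nonzero homogeneous elements of degrees given by coset representatives form a free basis, and finiteness of the index comes from fgc-ness in (i) and from the tensor decomposition \eqref{eq:tensori} together with \eqref{eq:Zi3} in (ii). Your optional direct verification that $2\Lm\subseteq\Gm(\cA,-)$ is a sound way to avoid invoking the classification, but it is not needed and the paper does not take that detour.
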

\begin{proof}  i):  This is well-known (see \cite[Remark 4.4.2]{ABFP1} and the earlier references there), but we indicate a proof  for the convenience of the reader
and as a model for the proof of (ii).  Let $\cX$ be a graded $Z(\cA)$-submodule of $\cA$, and let
 $X =   \supp_\Lm(\cX)$.
Then  $\Gm(\cA) +X \subseteq X$.  Thus, $X$ is the union of cosets
of $\Gm(\cA)$ in $\Lm$, so we can choose  a set of representatives $\set{\mu_i}_{i\in I}$ of these cosets.
Further, choose $0\ne m_i\in \cA^{\mu_i}$ for $i\in I$.  Then $\set{m_i}_{i\in I}$ is
a $Z(\cA)$-basis for $\cX$,  so $\cX$ is a free $Z(\cA)$-module  of rank equal to the
cardinality of $I$.  In particular, $\cA$ is a free $Z(\cA)$-module  of rank
$[\Lm:\Gm(\cA)]$, which must therefore be finite since $\cA$ is fgc.

(ii):  The component associative tori in the tensor product decomposition  \eqref{eq:tensori} of $(\cA,-)$ are fgc,
and hence so is $\cA$.  So by (i), $[\Lm:\Gm(\cA)]$ is finite, and hence, by \eqref{eq:Zi3}
$[\Lm:\Gm(\cA,-)]$ is finite.  The rest of the proof of (ii) is similar to the proof   of~(i).
\end{proof}

\myhead{Classical Lie tori}
We next recall constructions of some fgc centreless Lie tori of root-grading type
$\type{A}{r},\ r\ge 1$;\  $\type{BC}{r}$ or $\type{B}{r},\  r \ge 1$;\
$\type{C}{r},\  r\ge 1$; and  $\type{D}{r},\ r \ge 4$ respectively.  Here types
$\type{B}{1}$ and $\type{C}{1}$ should be interpreted as $\type{A}{1}$, and
type $\type{C}{2}$  should be interpreted as $\type{B}{2}$.

In  each of these constructions, we use $\Mat_s(\cA)$ to denote the associative
algebra of $s\times s$ matrices over $\cA$ if $s\ge 1$ and $\cA$ an associative algebra.
Note that $\Mat_s(\cA)$ is therefore also a Lie algebra under the commutator product.
Furthermore $\Mat_s(\cA)$ is a free left $\cA$-module with basis
$\set{e_{ij}}_{1\le i,j\le s}$, where the action of $\cA$ on
$\Mat_s(\cA)$  is by left multiplication on entries and where $e_{ij}$  denotes the $(i,j)$-matrix unit.

In the last three constructions we will use the notation $J_p := (\delta_{i,p+1-j}) \in \Mat_{p}(\base)$,
for $p \ge 1$.  In other words,  $J_p$ is the $p\times p$ matrix with ones
on the anti-diagonal and zeroes elsewhere.

\begin{Constructions}\ 
\label{con:class}
\smallskip\par{\bf (A)}:
\cite[\S 2]{BGK}, \cite[\S 10]{AF}, \cite[\S 4.4]{Neh4}.\footnote{In some of the references
cited in this section,
additional assumptions (such as $\base = \bbC$ or $r\ge 2$)
are made that can  be checked to be unnecessary for our purposes.}
Suppose that $r\ge 1$ and
$\cA$ is an fgc associative $\Lm$-torus.  Let
$\cL = \spl_{r+1}(\cA)$ be the derived algebra of the Lie algebra
$\Mat_{r+1}(\cA)$ under the commutator product.
More explicitly, one easily checks that
\begin{equation}
\label{eq:spl}
\cL = \spl_{r+1}(\cA) = \set{X\in \Mat_{r+1}(\cA) \suchthat \trace(X) \in [\cA,\cA] },
\end{equation}
where $[\cA,\cA]$ is the space spanned by commutators in $\cA$.
Let $\fh = \sum_{i=1}^{r} \base (e_{ii} - e_{i+1,i+1})$.  Then
$\fh$ is a split toral $\base$-subalgebra of $\cL$ with irreducible finite root system
$\De = \Delta_\base(\cL,\fh)$ of type $\type{A}{r}$.
Moreover $\cL$ is an fgc centreless Lie torus of type $(\De,\Lm)$,
where the $Q$-grading of $\cL$ is the root-space decomposition relative to
$\fh$ and the $\Lm$-grading of $\cL$ is induced by the $\Lm$-grading of~$\cA$.
We call $\cL$ the
\emph{$(r+1)\times (r+1)$-special linear
Lie torus} over $\cA$.

\smallskip\par{\bf (BC--B)}: \cite[\S III.3]{AABGP}, \cite[\S 7.2]{AB}.
Suppose that $r\ge 1$,
$L$ is a finitely generated free abelian group (which we will embed in a
larger group $\Lm$ of the same rank below), and
$(\cA,-)$  is an associative $L$-torus with involution.  Suppose also that $m\ge 1$ and
$D = \diag(d_1,\dots,d_m) \in \Mat_{m}(\cA)$, where
$d_1,\dots,d_m$ are nonzero  homogeneous hermitian elements of $\cA$ whose
respective degrees $\delta_1,\dots, \delta_m$ in $L$ are distinct modulo $2L$ with
$d_1  = 1$ and $\delta_1 = 0$.
To eliminate overlap with the other constructions, we assume that if  $r=1$ and $- = 1$,
then $m\ge 5$.
Let $G = \diag(J_{2r},D)$ in block diagonal form, and let
$\cL = \spu_{2r+m}(\cA,-,D)$  be the  derived algebra of the Lie algebra
$\set{X\in \Mat_{2r+m}(\cA) \suchthat G^{-1}\bar X ^t G = -X}$ under the commutator product.
More explicitly we have  \cite[\S 7.2.3]{AB}
\[
\cL = \spu_{2r+m}(\cA,-,D) =
\set{X\in \Mat_{2r+m}(\cA) \suchthat G^{-1}\bar X ^t G = -X,\ \trace(X) \in [\cA,\cA] }.
\]
To describe the external grading on $\cL$, we first embed
$L$ in the rational vector space $\bbQ \otimes_\bbZ L$ and let
$\Lm$ be the subgroup of $\bbQ \otimes_\bbZ L$ generated by
$L$ and $\frac 12 \delta_1,\dots, \frac 12 \delta_m$.
Further we define $\tau_i\in L$ for $1\le i \le 2r+m$
by $\tau_i = 0$ for $1\le i \le 2r$ and $\tau_{2r+i}  = \delta_i$ for $1\le i \le m$.
Then the associative algebra  $\Mat_{2r+m}(\cA)$ is $\Lm$-graded by assigning the degree
$\lm + \frac 12 \tau_i - \frac 12 \tau_j$ to each element in $\cA^\lm e_{ij}$
for $\lm\in L$, $1 \le i,j\le 2r+m$;  and one checks directly that
the involution $X\mapsto G^{-1}\bar X ^t G$ of $\Mat_{2r+m}(\cA)$ is $\Lm$-graded.
Consequently, the Lie algebra
$\Mat_{2r+m}(\cA)$ under the commutator product is $\Lm$-graded, and
$\cL$ is a $\Lm$-graded subalgebra of this algebra.
To describe the root grading on $\cL$, let $\fh = \sum_{i=1}^{r} \base (e_{ii} - e_{2r+1-i,2r+1-i})$.  Then
$\fh$ is a split toral $\base$-subalgebra of $\cL$ with irreducible finite root system
$\De = \Delta_\base(\cL,\fh)$, and the type of $\De$ is $\type{BC}{r}$ if
$- \ne 1$ and $\type{B}{r}$ if $- = 1$.
Also, the root-space decomposition of $\cL$ relative to $\fh$
is a $Q$-grading of $\cL$ which is compatible with the $\Lm$-grading just described.
With the resulting $Q\times \Lm$-grading, $\cL$ is an fgc centreless Lie torus of type
$(\De,\Lm)$.\footnote{As an example, if we take $(\cA,-) = (R_2,1)$, $r= 1$
and $D = \diag(1,t_1,t_2)$, $\spu_{5}(\cA,-,D)$ is the centreless Lie torus
whose universal central extension is called the
\emph{baby TKK algebra} in \cite{Tan}.}
We call $\cL$ the
\emph{$(2r+m)\times (2r+m)$-special unitary Lie torus}
over $(\cA,-)$ determined by $D$.

\smallskip\par{\bf (C)}:  \cite[\S III.4]{AABGP}, \cite[\S 11]{AF}.
Suppose that $r\ge 1$ and
$(\cA,-)$  is an  associative $\Lm$-torus with involution.
To avoid degenerate cases and
eliminate overlap with the other constructions, we assume that if  $r=1$ or $2$, then
$(\cA,-)$ is not isomorphic
to $(R_q,1)$, $(R_1,\natural)\otimes (R_{q},1)$ or
$(\cQ(-1),\natural)\otimes (R_{q},1)$ for $q\ge 0$.
Let $G = \left[\begin{smallmatrix} 0&J_r  \\-J_r&0 \end{smallmatrix}\right]
\in\Mat_{2r}(\base)$ in block form,
and let $\cL = \ssp_{2r}(\cA,-)$ be the  derived algebra of the Lie algebra
$\set{X\in \Mat_{2r}(\cA) \suchthat G^{-1}\bar X ^t G = -X}$ under the commutator
product.  Once again, we have more explicitly that
\[
\cL = \ssp_{2r}(\cA,-) =
\set{X\in \Mat_{2r}(\cA) \suchthat G^{-1}\bar X ^t G = -X,\ \trace(X) \in [\cA,\cA] }.
\]
Indeed, if $r\ge 2$ this is easily checked directly, whereas if $r= 1$ it is easily  checked using
Lemma \ref{lem:A+A+}.
Let $\fh = \sum_{i=1}^{r} \base (e_{ii} - e_{2r+1-i,2r+1-i})$.  Then
$\fh$ is a split toral $\base$-subalgebra of $\cL$ with irreducible finite root system
$\De = \Delta_\base(\cL,\fh)$ of type $\type{C}{r}$ (see  the proof of
Proposition \ref{prop:classinv} below for this calculation), and
$\cL$ is an fgc centreless Lie torus of type $(\De,\Lm)$ with gradings
determined by $\fh$ and $\cA$ as in (A) above.
We call $\cL$ the
\emph{$(2r)\times (2r)$-special symplectic Lie torus}
over $(\cA,-)$.

\smallskip\par{\bf (D)}:
Suppose that $r\ge 4$ and
$\cA = R_n$ with its natural grading by $\Lm = \bbZ^n$.
Let
\[
\cL = \orth_{2r}(\cA) :=
\set{X\in \Mat_{2r}(\cA) \suchthat J_{2r}^{-1} X ^t J_{2r} = -X}.
\]
Then $\cL$ is a Lie algebra under the commutator product.
Let $\fh = \sum_{i=1}^{r} \base (e_{ii} - e_{2r+1-i,2r+1-i})$.  Then
$\fh$ is a split toral $\base$-subalgebra of $\cL$ with irreducible finite root system
$\De = \Delta_\base(\cL,\fh)$ of type $\type{D}{r}$, and
$\cL$ is an fgc centreless Lie torus of type $(\De,\Lm)$ with gradings
determined by $\fh$ and $\cA$ as in (A) above.  In fact,
$\cL \simeq \orth_{2r}(\base) \otimes R_n$ is just the untwisted   Lie torus
of type $(\De,\bbZ^n)$ (see Example \ref{ex:untwisted}), viewed as an algebra of matrices.
We call $\cL$ the
\emph{$(2r)\times (2r)$-orthogonal Lie torus}
over $\cA$.
\end{Constructions}

We note  that in each of the constructions, the indicated subalgebra $\fh$
is the  maximal split toral $\base$-subalgebra $\cL_0^0$ of $\cL$ that was denoted by $\fh$ in
Sections \ref{sec:cssLT} to~\ref{sec:isotopy}.

We call an fgc centreless Lie torus that arises from any one of the Constructions
(A), (BC--B), (C) or (D)  a \emph{classical Lie torus}.

\begin{remark} If we allow $r=0$
in Constructions (A) and (BC--B), we obtain multiloop Lie algebras
that are not Lie tori.\footnote{A few low rank cases must be excluded but these are easy to
identify.}  Indeed, one can show that they are multiloop Lie algebras (see the discussion following Example
\ref{ex:untwisted})
using the multiloop realization theorem
\cite[Cor.~8.3.5]{ABFP1}.  (The hypotheses of that theorem can be checked using a base ring extension argument as in Proposition \ref{prop:centroidLT} below.)
Also, one can show that they do not contain nonzero split toral $\base$-subalgebras
(using  \cite[\S 4.5.9]{ABP3} and \cite[Prop. 5.2.5]{AB}), which shows that they are not Lie tori.
Since our interest
in this article is in Lie tori, we omit the details in this remark and we do not consider the  $r=0$ case
further.
\end{remark}

\myhead{Exceptional Lie tori}  We next  
display in Table \ref{tab:exceptional} a list of
fgc centreless Lie tori that we call \emph{exceptional Lie tori}.
For convenience of reference we have labeled
these Lie tori  as \#1--27 in the column  labeled $\#$.
Each row of the table represents exactly one
Lie torus of nullity $n$ for each $n\ge n_0$, where
the minimum nullity $n_0$ is displayed in the second column (not counting the
$\#$ column) of the
table.\footnote{For the Lie tori numbered 25, 26 and 27,
there are parameters $\sg_0$ and $\mu$ in the description given
in \cite{F}.  However, one can argue as in \cite[\S 10]{AF}, that
the Lie torus does not depend on these choices up to bi-isomorphism.}

%%%Table of exceptional Lie tori

\begin{table}[ht]
\relscale{.87}
\renewcommand{\arraystretch}{1.2}
\setlength\doublerulesep{1pt}
\centering

\begin{tabular}
{p{.3cm} ||   p{1.2cm} | p{.4cm}| p{.9cm} | p{1.5cm}| p{1.2cm} | p{.8cm} | p{4cm} }
%{c ||   c | c| c | c | c | c | c }

%%%%%%%%Heading%%%%%%%%%%

\drop\drop $\#$
&Root-grading type
& \drop\drop $n_0$
& \drop\drop  $\crk(\cL)$
& \drop\drop  $\rkv(\cL)$
& \drop\drop $\Lm/\Gamma(\cL)$
& \drop\drop Index
& \drop\drop Reference
\\
\hline\hline
%\whline

%%%%%%%%Root-grading type A_1%%%%%%%%%%
$1$
&$\type{A}{1}$
& $3$
& $133$
& $(27)$
& $\bbZ_3^3$
& $\Tindex{}{E}{7,1}{78}$
&\cite[Example~6.8(3)]{Y1}
\\
\hline

%%%%%%%%Root-grading type A_2%%%%%%%%%%
$2$
&$\type{A}{2}$
& $3$
& $78$
& $(8)$
& $\bbZ_2^3$
& $\Tindex{1}{E}{6,2}{28}$
&\cite[Example~9.2]{AF}
\\
\hline

%%%%%%%%Root-grading type C_3%%%%%%%%%%
$3$
&$\type{C}{3}$
& $3$
& $133$
& $(8,1)$
& $\bbZ_2^3$
& $\Tindex{}{E}{7,3}{28}$
&\cite[Thm.~4.87(ii)]{AG}
\\
\hline

%%%%%%%%Root-grading type E_r%%%%%%%%
$4$
&$\type{E}{6}$
& $0$
& $78$
& $(1)$
& $\set{0}$
& $\Tindex{1}{E}{6,6}{0}$
&untwisted
\\
\hline

%%%%%%%%
$5$
&$\type{E}{7}$
& $0$
& $133$
& $(1)$
& $\set{0}$
& $\Tindex{}{E}{7,7}{0}$
&untwisted
\\
\hline

%%%%%%%%
$6$
&$\type{E}{8}$
& $0$
& $248$
& $(1)$
& $\set{0}$
& $ \Tindex{}{E}{8,8}{0}$
&untwisted
\\
\hline

%%%%%%%%Root-grading type G_2%%%%%%%%%%
$7$
&$\type{G}{2}$
& $0$
& $14$
& $(1,1)$
& $\set{0}$
& $\Tindex{}{G}{2,2}{0}$
&untwisted
\\
\hline

%%%%%%%%%%%%%%%%%%%%%%%%%
$8$
&\ditto
& $1$
& $28$
& $(3,1)$
& $\bbZ_3$
& $\Tindex{3}{D}{4,2}{2}$
&\cite[Thm.~5.63, p=1]{AG}
\\
\hline

%%%%%%%%%%%%%%%%%%%%%%%%%
$9$
&\ditto
& $2$
& $78$
& $(9,1)$
& $\bbZ_3^2$
& $\Tindex{1}{E}{6,2}{16}$
&\cite[Thm.~5.63, p=2]{AG}
\\
\hline

%%%%%%%%%%%%%%%%%%%%%%%%%
$10$
&\ditto
& $3$
& $248$
& $(27,1)$
& $\bbZ_3^3$
& $\Tindex{}{E}{8,2}{78}$
&\cite[Thm.~5.63, p=3]{AG}
\\
\hline

%%%%%%%%Root-grading type F_4%%%%%%%%%%
$11$
&$\type{F}{4}$
& $0$
& $52$
& $(1,1)$
& $\set{0}$
& $\Tindex{}{F}{4,4}{0}$
&untwisted
\\
\hline

%%%%%%%%%%%%%%%%%%%%%%%%%
$12$
&\ditto
& $1$
& $78$
& $(2,1)$
& $\bbZ_2$
& $\Tindex{2}{E}{6,4}{2}$
&\cite[Thm.~5.50, p=1]{AG}
\\
\hline

%%%%%%%%%%%%%%%%%%%%%%%%%
$13$
&\ditto
& $2$
& $133$
& $(4,1)$
& $\bbZ_2^2$
&  $\Tindex{}{E}{7,4}{9}$
&\cite[Thm.~5.50, p=2]{AG}
\\
\hline

%%%%%%%%%%%%%%%%%%%%%%%%%
$14$
&\ditto
& $3$
& $248$
& $(8,1)$
& $\bbZ_2^3$
& $\Tindex{}{E}{8,4}{28}$
&\cite[Thm.~5.50, p=3]{AG}
\\
\hline

%%%%%%%%Root-grading type BC_1, Class I%%%%%%%%%%
$15$
&$\type{BC}{1}$
& $3$
& $52$
& $(8,1)$
& $\bbZ_2^3$
& $\Tindex{}{F}{4,1}{21}$
&\cite[Thm.~5.19(b), k=0]{AFY}
\\
\hline

%%%%%%%%%%%%%%%%%%%%%%%%%%%%%%
$16$
&\ditto
& $4$
& $78$
& $(16,8)$
& $\bbZ_2^4$
& $\Tindex{2}{E}{6,1}{29}$
&\cite[Thm.~5.19(b), k=1]{AFY}
\\
\hline

%%%%%%%%%%%%%%%%%%%%%%%%%%%%%%
$17$
&\ditto
& $5$
& $133$
& $(32,10)$
& $\bbZ_2^5$
& $\Tindex{}{E}{7,1}{48}$
&\cite[Thm.~5.19(b), k=2]{AFY}
\\
\hline

%%%%%%%%%%%%%%%%%%%%%%%%%%%%%%
$18$
&\ditto
& $6$
& $248$
& $(64,14)$
& $\bbZ_2^6$
& $\Tindex{}{E}{8,1}{91}$
&\cite[Thm.~5.19(b), k=3]{AFY}
\\
\hline

%%%%%%%%Root-grading type BC_1, Class II(b)%%%%%%%%%%
$19$
&\ditto
& $5$
& $78$
& $(20,1)$
& $\bbZ_2^5$
& $\Tindex{2}{E}{6,1}{35}$
&\cite[Thm.~10.6(a), case 1]{AFY}
\\
\hline

%%%%%%%%%%%%%%%%%%%%%%%%%%%%%%
$20$
&\ditto
& $6$
& $133$
& $(32,1)$
& $\bbZ_2^6$
& $\Tindex{}{E}{7,1}{66}$
&\cite[Thm.~10.6(a), case 2]{AFY}
\\
\hline

%%%%%%%%%%%%%%%%%%%%%%%%%%%%%%
$21$
&\ditto
& $7$
& $248$
& $(56,1)$
& $\bbZ_2^7$
& $\Tindex{}{E}{8,1}{133}$
&\cite[Thm.~10.6(a), case 3]{AFY}
\\
\hline

%%%%%%%%%%%%%%%%%%%%%%%%%%%%%%
$22$
&\ditto
& $5$
& $133$
& $(32,1)$
& $\bbZ_2^5$
& $\Tindex{}{E}{7,1}{66}$
&\cite[Remark 10.6(a)]{AFY}
\\
\hline

%%%%%%%%Root-grading type BC_1, Class II(c)%%%%%%%%%%
$23$
&\ditto
& $3$
& $133$
& $(32,1)$
& $\bbZ_2 \oplus \bbZ_4^2$
& $\Tindex{}{E}{7,1}{66}$
&\cite[Thm.~13.3, case 1]{AFY}
\\
\hline

%%%%%%%%%%%%%%%%%%%%%%%%%%%%%%
$24$
&\ditto
& $3$
& $248$
& $(56,1)$
& $\bbZ_4^3$
& $\Tindex{}{E}{8,1}{133}$
&\cite[Thm.~13.3, case 2]{AFY}
\\
\hline

%%%%%%%%Root-grading type BC_2%%%%%%%%%%
$25$
&$\type{BC}{2}$
& $3$
& $78$
& $(8,12,1)$
& $\bbZ_2^3$
& $\Tindex{2}{E}{6,2}{16'}$
&\cite[Lem.~7, $\tilde n=0$]{F}
\\
\hline

%%%%%%%%
$26$
&\ditto
& $4$
& $133$
& $(16,16,1)$
& $\bbZ_2^4$
& $\Tindex{}{E}{7,2}{31}$
&\cite[Lem.~7, $\tilde n=1$]{F}
\\
\hline

%%%%%%%%
$27$
&\ditto
& $5$
& $248$
& $(32,24,1)$
& $\bbZ_2^5$
& $\Tindex{}{E}{8,2}{66}$
&\cite[Lem.~7, $\tilde n=2$]{F}
\\
\hline

\end{tabular}
\medskip
\caption{Exceptional Lie tori and their invariants}
\label{tab:exceptional}
\end{table}
We do not provide here precise definitions of the exceptional Lie tori,
because to do so would take us rather far afield into the
fascinating world of nonassociative tori.  Instead,
in each case we have given  a reference for the
definition in the last column of the
table.\footnote{We only cite the reference that we find most convenient in our context.
Additional and sometimes earlier references can be found in the cited articles as well
as in Section 7 of the survey article \cite{AF}.}
If the Lie torus is the untwisted Lie torus with the indicated
root-grading type and nullity $n$ (see Example \ref{ex:untwisted}), we indicate
this simply with the word \emph{untwisted}.
In the case of the Lie tori numbered 15--24 (resp.~25--27), the Lie torus
is constructed using the Kantor construction from a structurable torus
(resp.~quasi-torus) that is defined in the indicated reference.
(See \cite[Thm.~5.6]{AY} and \cite[Thm.~3]{F}.)
Also,  for the Lie tori numbered 25--27, the quantity $\tilde n$ used in the last column
is the integer denoted by $n$ in \cite[Lemma~7]{F}.

For each exceptional Lie torus $\cL$, Columns
1, 3 and 4 of the table contain the isomorphism invariants  of $\cL$
(besides the nullity) that are described in Theorem \ref{thm:fourinv}, namely
the root-grading type of $\cL$, the centroid rank of $\cL$ and the root-space rank vector
of $\cL$ respectively.
Column  5 contains  the isotopy invariant described in Proposition
\ref{prop:isotopyinv}, namely the quotient external-grading group $\Lm/\Gm(\cL)$ (up to isomorphism)
of  $\cL$.\footnote{In this column and subsequently, we denote the direct
sum of $s$ copies of the group $\bbZ_\ell$ of integers mod $\ell$ by
$\bbZ_\ell^s$ for $\ell\ge 1$ and
$s\ge 0$. (If $s=0$ this direct sum is 0.)}

Finally,  Column 6 contains the index of $\cL$ (see Remark \ref{rem:Titsindex}).
These indices were
calculated using Tits' classification of indices \cite[Table II]{T}, Theorem \ref{thm:max},
and the entries in  Columns 1, 3 and 4, together with some special arguments in a few cases.
(See \cite[\S14.2]{ABP3} for some similar calculations.)

\myhead{The structure theorem}
In about the last 15 years, structure theorems (coordinatization theorems)
have been proved for centreless Lie tori  of each root-grading type.
This is work of
(in alphabetical order) Allison, Benkart, Berman, Faulkner,
Gao, Krylyuk, Neher and Yoshii in various combinations beginning with \cite{BGK}.
The reader can consult
Section 7 of the survey article \cite{AF} for precise references.

It turns out from these theorems that the only
centreless Lie tori that are not fgc are the Lie tori
$\spl_{r+1}(\cA)$ defined exactly in (A) using an associative torus $\cA$ that
is not fgc.

The following theorem summarizes the results of the structure theorems
for fgc centreless Lie tori.  There is some work needed to translate
the known results into our form,
but it  is not difficult to supply these arguments and we omit them.

\begin{theorem}
\label{thm:structure}
If $\base$ is algebraically closed,
every fgc centreless Lie torus is bi-isomorphic and hence
isotopic and isomorphic to  either  a classical Lie torus or an exceptional Lie torus.
\end{theorem}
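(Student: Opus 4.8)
The plan is to argue case by case according to the root-grading type $X$ of the given fgc centreless Lie torus $\cL$ of type $(\De,\Lm)$, invoking for each $X$ the coordinatization (structure) theorem for Lie tori of type $X$; precise attributions are collected in \cite[\S 7]{AF} (see also \cite[\S 5.8]{Neh3}). Each such theorem presents $\cL$, up to bi-isomorphism (after a small extra argument in some cases, see below), in terms of a coordinate algebra: an associative torus, an associative torus with involution, an alternative torus, a Jordan torus, or a structurable torus or quasi-torus, according to $X$. A first reduction, valid in every case, is that this coordinate algebra is itself fgc -- seen by comparing centroids after passing to the central closure, as in the proof of Proposition~\ref{prop:centroidLT}. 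Since $\base$ is algebraically closed, the coordinate structure then lies in one of the classified families of fgc coordinate tori, and what remains is to match the resulting Lie algebra with one of the four families of Constructions~\ref{con:class} or with a row of Table~\ref{tab:exceptional}. The last clause of the theorem is then automatic, since bi-isomorphism entails isotopy and hence isomorphism by the definitions of Section~\ref{sec:isotopy}.

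\emph{Classical root-grading types.} Consider first the types that do not occur as the root-grading type of any exceptional Lie torus in Table~\ref{tab:exceptional}, namely $X = \type{A}{r}$ with $r\ge 3$, $X = \type{B}{r}$ with $r\ge 2$, $X = \type{C}{r}$ with $r\ge 4$, $X = \type{BC}{r}$ with $r\ge 3$, and $X = \type{D}{r}$ with $r\ge 4$. The coordinatization theorems for these types (\cite{BGK} and \cite[\S 10]{AF} for $\type{A}{r}$; \cite[\S III.3]{AABGP}, \cite[\S 7.2]{AB} for $\type{B}{r}$ and $\type{BC}{r}$; \cite[\S III.4]{AABGP}, \cite[\S 11]{AF} for $\type{C}{r}$; and the coordinatization theorem for type $\type{D}{r}$) present $\cL$, up to bi-isomorphism, respectively as $\spl_{r+1}(\cA)$ for an associative $\Lm$-torus $\cA$; as $\spu_{2r+m}(\cA,-,D)$ with $-=1$, respectively $-\ne 1$; as $\ssp_{2r}(\cA,-)$ for an associative torus with involution $(\cA,-)$; and as $\orth_{2r}(R_n)$. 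In each case the coordinate structure is fgc -- for associative tori with involution this pins it down to the tensor-product form~\eqref{eq:tensori}, and for the commutative coordinate ring of an orthogonal Lie torus it forces an isomorphism with $R_n$ -- and so $\cL$ is bi-isomorphic to a classical Lie torus as in Constructions~\ref{con:class}. One point needs care: several of the cited theorems are stated only up to isotopy or isograded-isomorphism, and to upgrade their conclusions to bi-isomorphism one observes in addition that each of the families (A), (BC--B), (C), (D) is stable under isomorphism of its coordinate (involutive) torus.

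\emph{Exceptional root-grading types and low ranks.} The remaining root-grading types are $\type{A}{1}$, $\type{A}{2}$, $\type{C}{3}$, $\type{BC}{1}$, $\type{BC}{2}$, $\type{E}{6}$, $\type{E}{7}$, $\type{E}{8}$, $\type{F}{4}$ and $\type{G}{2}$, the degenerate types $\type{B}{1}$, $\type{C}{1}$, $\type{C}{2}$ having been absorbed above via $\type{B}{1}=\type{C}{1}=\type{A}{1}$ and $\type{C}{2}=\type{B}{2}$. For these one invokes the classification of fgc centreless Lie tori of type $X$ -- \cite{BGK} and \cite[\S 10]{AF} for $\type{A}{2}$, \cite{Y1} for $\type{A}{1}$, \cite{AG} for $\type{C}{3}$, $\type{G}{2}$ and $\type{F}{4}$, \cite{AFY} for $\type{BC}{1}$, \cite{F} for $\type{BC}{2}$, and the structure theory recalled in \cite[\S 7]{AF} for the purely exceptional root types -- together with the classification of the relevant nonassociative fgc coordinate tori, i.e.\ the octonion torus, the Albert torus and the remaining Jordan tori, and the structurable tori and quasi-tori of \cite{AY} and \cite{F}. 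This shows that $\cL$ is bi-isomorphic either to a member of one of the classical families, when the coordinate structure is associative (possibly with involution), or, in each nullity $n$, to one of the finitely many Lie tori of Table~\ref{tab:exceptional} whose root-grading type is $X$ and whose minimum nullity $n_0$ satisfies $n_0\le n$; this last matching is checked row by row.

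\emph{The main obstacle.} No single step is hard; the difficulty is purely one of bookkeeping, since the results being assembled are spread over many papers written in differing formalisms. Concretely, one must (i) confirm that the fgc hypothesis restricts each coordinate structure to the classified list, (ii) upgrade the various isomorphism, isograded-isomorphism and isotopy statements found in the literature to the uniform conclusion of bi-isomorphism, and (iii) verify that the constructions over the exceptional coordinate tori produce exactly the $27$ Lie tori of Table~\ref{tab:exceptional}, with no repetitions and no omissions. These verifications are routine but lengthy, and we omit them.
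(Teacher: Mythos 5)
Your proposal is correct and follows essentially the same route as the paper, which likewise treats Theorem \ref{thm:structure} as a summary of the published coordinatization theorems (with precise references collected in \cite[\S 7]{AF}) and explicitly omits the routine but lengthy translation into the form of Constructions \ref{con:class} and Table \ref{tab:exceptional}. Your outline simply makes the omitted bookkeeping steps (fgc descent to the coordinate torus, upgrading to bi-isomorphism, case division by root-grading type) more explicit than the paper does.
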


\section{Invariants of classical Lie tori}
\label{sec:invclass}

In this section,   we calculate the invariants described in Theorem \ref{thm:fourinv}
and Proposition \ref{prop:isotopyinv} for  classical Lie tori.
For this, we first need to calculate the centroid in each  case.

\begin{proposition}
\label{prop:centroidLT}
Let $\cL$ be $\spl_{r+1}(\cA)$ as in (A),
$\spu_{2r+m}(\cA,-,D)$ as in (BC--B), $\ssp_{2r}(\cA,-)$ as in (C),
or $\orth_{2r}(\cA)$ as in (D).  Correspondingly let
$\cM$ be  $\Mat_{r+1}(\cA)$,
$\Mat_{2r+m}(\cA)$, $\Mat_{2r}(\cA)$  or $\Mat_{2r}(\cA)$,
in which case $\cL$ is a Lie subalgebra of $\cM$ under the commutator product.
Also correspondingly, let $\cZ$ be
$Z(\cA)$, $Z(\cA,-)$, $Z(\cA,-)$ or $\cA$, and   regard
$\cM$ as a Lie algebra over $\cZ$, where the action of $\cZ$ on $\cM$ is by
left multiplication on entries.  Then
$\cL$ is a $\cZ$-subalgebra of $\cM$ and the map
$\rho : \cZ  \to \Cd_\base(\cL)$ defined by $\rho(z)(x) = z  x$ is a $\Lm$-graded algebra isomorphism.
\end{proposition}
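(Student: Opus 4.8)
The plan is in two stages: first, the elementary verification that $\rho$ is a well-defined injective $\Lm$-graded algebra homomorphism $\cZ\to\Cd_\base(\cL)$; second, a base-ring extension argument for surjectivity.

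\emph{Stage 1 (that $\rho$ lands in $\Cd_\base(\cL)$, is a graded homomorphism, and is injective).} In each of the four cases $\cZ$ is contained in the centre of $\cM$ viewed as an associative algebra, so for $z\in\cZ$ left multiplication by $z$ on entries equals right multiplication; hence $\rho(z)$ satisfies $\rho(z)[X,Y]=[\rho(z)X,Y]=[X,\rho(z)Y]$ on all of $\cM$, in particular on $\cL$. That $\rho(z)$ maps $\cL$ into itself uses that $z$ is central, so $z[\cA,\cA]\subseteq[\cA,\cA]$ (handling the trace conditions in \eqref{eq:spl} and its analogues in (BC--B), (C)), and, in (BC--B), (C), that $z$ is hermitian, so left multiplication by $z$ commutes with $X\mapsto G^{-1}\bar X^{t}G$. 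Thus $\cL$ is a $\cZ$-subalgebra of $\cM$ and $\rho(z)\in\Cd_\base(\cL)$. That $\rho$ is an algebra homomorphism is immediate, and it is $\Lm$-graded because the grading of $\cM$ (hence of $\cL$) is induced from that of $\cA$ --- in (BC--B) together with the fixed half-integral shifts $\tfrac12\tau_i$, which $\rho(z)$ translates uniformly. For injectivity, $\cL$ contains a matrix with an entry equal to $1\in\cA$ (e.g.\ $e_{12}$ in (A), or any root vector $e_{\al}^{0}$ of the split simple subalgebra $\fg=\cL^{0}\subseteq\cM$ in the other cases), so $\rho(z)=0$ forces $z=0$.

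\emph{Stage 2 (surjectivity).} Let $F=\operatorname{Frac}(\cZ)$; since $\cZ$ is a commutative associative torus it is a domain, so $F$ is a field extension of $\base$. Form $\cM_{F}:=F\ot_{\cZ}\cM$ and $\cL_{F}:=F\ot_{\cZ}\cL\subseteq\cM_{F}$, the inclusion being valid because $\cM=\Mat_{s}(\cA)$ is torsion-free over $\cZ$ ($\cA$ being a domain). By Lemma \ref{lem:freemodule}, $\cA$ is finitely generated over its centre, so $\cM_{F}=\Mat_{s}(F\ot_{\cZ}\cA)$ is finite-dimensional over $F$; moreover $\cA$ is prime and module-finite over its centre, so by Posner's theorem $F\ot_{\cZ}\cA$ --- with its induced involution in cases (BC--B), (C), where by \eqref{eq:Zi1}--\eqref{eq:Zi2} and the fact that $Z(\cA)$ is a domain the second-kind case cannot split off an $F\times F$ --- is a finite-dimensional central simple algebra (with involution) over $F$. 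Consequently $\cL_{F}$ is, respectively, $\spl_{r+1}$, $\spu_{2r+m}(-,D)$, $\ssp_{2r}(-)$ or $\orth_{2r}$ over this central simple $F$-algebra, and the classical structure theory of these Lie algebras shows that $\cL_{F}$ is a finite-dimensional \emph{central simple} Lie algebra over $F$; in particular, since $\cL_{F}$ is perfect, Remark \ref{rem:cinduced}(i) gives $\Cd_{\base}(\cL_{F})=\Cd_{F}(\cL_{F})=F$. Now take $c\in\Cd_{\base}(\cL)$. Since $\cL$ is perfect, $\Cd_{\base}(\cL)$ is commutative, so $c$ commutes with $\rho(\cZ)$, i.e.\ $c$ is $\cZ$-linear; hence $c$ extends to the $F$-linear operator $c_{F}=\id_{F}\ot c$ on $\cL_{F}$, which again satisfies the centroid identities (they hold on the $\base$-spanning set $\cL$, hence everywhere by $F$-bilinearity). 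Thus $c_{F}=\lambda\,\id$ for a unique $\lambda\in F$, and restriction to $\cL$ gives $\lambda\cL\subseteq\cL\subseteq\cM$. Applying this to an element $u\in\cL$ having a matrix entry equal to $1\in\cA$, the corresponding entry of $\lambda u$ is $\lambda$, which must lie in $\cA$; as $F$ is central in $F\ot_{\cZ}\cA$ this yields $\lambda\in\cA\cap Z(F\ot_{\cZ}\cA)=Z(\cA)$, settling cases (A), (D). In cases (BC--B), (C), applying $G^{-1}\overline{(\cdot)}^{t}G=-(\cdot)$ to $\lambda u$ and using $u\in\cL$ gives $-\bar\lambda u=-\lambda u$, so $\bar\lambda=\lambda$ and $\lambda\in Z(\cA)_{+}=Z(\cA,-)=\cZ$. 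Either way $c=\rho(\lambda)\in\rho(\cZ)$, so $\rho$ is onto, and together with Stage 1 this makes $\rho$ a $\Lm$-graded algebra isomorphism $\cZ\to\Cd_\base(\cL)$.

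\emph{Main obstacle.} The crux is the central simplicity of $\cL_{F}$ over $F$ asserted in Stage 2: it must be checked family by family, combining Posner's theorem (to replace the associative torus by a central simple algebra, possibly with involution of the second kind) with the classical classification of special linear, special unitary, special symplectic and orthogonal Lie algebras over a field, and verifying that the low-rank and low-degree restrictions imposed in Constructions \ref{con:class} --- such as $m\ge5$ when $r=1$ and $-=1$, the exclusions for $r=1,2$ in (C), and $r\ge4$ in (D) --- are precisely those keeping $\cL_{F}$ central simple rather than reducible or degenerate. Everything else is bookkeeping with localisations and the explicit matrix descriptions in Constructions \ref{con:class}.
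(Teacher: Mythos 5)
Your proof is correct and follows essentially the same route as the paper: both establish surjectivity of $\rho$ by extending scalars to the quotient field $\tZ$ of $\cZ$ and invoking the classical fact that the resulting finite-dimensional Lie algebra $\tZ\ot_\cZ\cL$ is central simple over $\tZ$ (the paper cites Jacobson, Thm.~X.8, for the $\spl$ case and declares the other three cases similar). The only divergence is the final step: the paper deduces $C=\rho(\cZ)$ from $\dim_{\tZ}(\tZ\ot_\cZ C)\le 1$ combined with the fact that $C\simeq\base[\Gamma]$ is a free $\cZ$-module of rank $[\Gamma:\Omega]$, whereas you pin down the scalar $\lambda\in\tZ$ directly and use a matrix-entry (and, in the involution cases, hermitian-symmetry) argument to place it in $\cZ$ --- both finishes are valid.
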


\begin{proof} This can be proved using Corollary 5.16 and Theorem 4.18 of \cite{BN},
although care must be taken in low rank.  Instead, we present an argument
using base ring extension and results about finite dimensional simple Lie algebras from
\cite[Chap. X]{J}.   We record this for the algebra $\cL =\spl_{r+1}(\cA)$ as in (A),
with the other cases being similar.

It is clear that  $\cL$ is a $\cZ$-subalgebra of $\cM$ and that
that $\rho$ is an injective $\Lm $-graded algebra homomorphism.  So it remains to show that $\rho$ is surjective.  Let $C = \Cd_\base(\cL)$ and use $\rho$ to regard $C$ as an algebra over $\cZ$.

Now $\cZ \simeq \base[\Omega]$ and $C \simeq \base[\Gamma]$ as $\Lm$-graded algebras,
where $\Omega$ and $\Gamma$ are subgroups of $\Lm$.  (The first statement is clear
and the second is \eqref{eq:centstructure}.)  Hence, $\Omega$ is a subgroup of $\Gamma$
and $C$ is a free $\cZ$-module of rank $[\Gamma:\Omega]$.  So, to show that
$\rho$ is surjective, it suffices to show that $\rank_\cZ(C) \le 1$.

Note that  $\cL$ is a free $\cZ$-module (for example since $C$ is a free
$\cZ$-module and $\cL$ is a free $C$-module by Proposition \ref{prop:CmodLT}(ii)).

Next, since $\cL$ is perfect, we have $C= \Cd_\cZ(\cL)$,
where $ \Cd_\cZ(\cL)
= \set{c\in \End_\cZ(\cL) \suchthat c[x, y]
= [c(x), y] = [x, c(y)] \text{ for } x,y\in \cL}$.  So
we have a natural $\tZ$-algebra homomorphism
\begin{equation}
\label{eq:Cextend}
\tZ\ot_\cZ C = \tZ\ot_\cZ \Cd_\cZ(\cL) \mapsto \Cd_\tZ( \tZ\ot_\cZ\cL),
\end{equation}
where $\tZ$ is the quotient field of $\cZ$.  We  claim that this map is injective.
Indeed, any element of $\tZ\ot_\cZ C$ is of the form $z^{-1}\ot c$, where $0\ne z\in \cZ$ and $c\in C$.
But if this element is in the kernel of the map  \eqref{eq:Cextend} then so is $1\ot c$.  So
$1\ot cx = 0$ for $x\in \cL$, which
implies that $cx = 0$ for $x\in \cL$, since $\cL$ is a free $\cZ$-module. Thus $c = 0$,
and we have proved the claim.
So it suffices to show that
$\dim_\tZ(\Cd_\tZ( \tZ\ot_\cZ\cL)) \le 1$, or in other words that
$\tZ\ot_\cZ\cL$ is central over $\tZ$.

Since $\cA$ is a free $\cZ$-module by  Lemma \ref{lem:freemodule},
$\cA$ embeds naturally in the $\tZ$-algebra $\tZ \ot_\cZ \cA$. Moreover,
since $\cA$ is an fgc domain, it is easily checked that $\tZ \ot_\cZ \cA$ is a finite dimensional
central division algebra over $\tZ$.  Also, by definition, $\cL =  [\cM,\cM]$, so
$\tZ \ot_\cZ \cL = \tZ \ot_\cZ [\cM,\cM] \simeq [\tZ \ot_\cZ\cM,\tZ \ot_\cZ\cM]$
as $\tZ$-algebras, where the last holds since $\tZ/\cZ$ is a flat extension. But
$\tZ \ot_\cZ\cM = \tZ \ot_\cZ\Mat_{r+1}(\cA) \simeq \Mat_{r+1}(\tZ \ot_\cZ \cA)$, so
\[\tZ \ot_\cZ \cL =[\Mat_{r+1}(\tZ \ot_\cZ \cA),\Mat_{r+1}(\tZ \ot_\cZ \cA)].\]
Thus by \cite[Thm. X.8]{J}, $\tZ \ot_\cZ \cL$ is a finite dimensional
central simple Lie algebra over $\tZ$.
\end{proof}

\myhead{Parameterization of classical Lie tori}
To  tabulate the invariants of classical Lie tori, we need to view each of the  Constructions (A), (BC--B), (C) and (D) as a construction from a list of parameters.
We now do this  using for the most part the tensor product decomposition of associative tori.

\smallskip\par{\bf (A)}: Suppose $\cL = \spl_{r+1}(\cA)$ is a special linear Lie torus as in
Construction \ref{con:class}(A).
Then, as noted in Section \myref{sec:structure}, we can assume that
\[\cA = \cA_1\otimes \dots \otimes \cA_k \otimes R_q,\]
where  $k\ge 0$, $q\ge 0$ and $\cA_i = \cQ(\zeta_i)$
with $\zeta_i$ a root of unity $\ne 1$ in $\base^\times$ for $i=1,\dots,k$.  So we can view $\cL$ as being constructed from the
parameters
\begin{equation*}
\label{eq:parA}
r\ge 1,\ k\ge 0,\  \zeta_1,\dots,\zeta_k \in \base^\times \text{ and } \ q\ge 0.
\end{equation*}
The only restrictions on the integral parameters $r$, $k$ and $q$ are those indicated, and the only restrictions
on the parameters $\zeta_1,\dots,\zeta_k$ are
\[2 \le \order{\zeta_i} < \infty \quad \text{ for } 1\le i \le k,\]
where  $\order{\zeta_i}$ denotes the order of $\zeta_i$ in the group $\base^\times$.

\smallskip\par{\bf (BC--B)}:  Suppose $\cL = \spu_{2r+m}(\cA,-,D)$  is a special unitary Lie torus  
as in Construction \ref{con:class}(BC--B).  Then as noted in Section \myref{sec:structure} we can assume that
\begin{equation*}\label{eq:BCnot1}
(\cA,-) =(\cA_1,-) \otimes \dots \otimes (\cA_k,-) \otimes (\cA_{k+1},-) \otimes (R_q,1),
\end{equation*}
where $k\ge 0$, $q\ge 0$,
\begin{equation}\label{eq:BCnot2}
(\cA_i,-) = (\cQ(-1),\natural) \quad \text{ for } i=1,\dots,k,
\end{equation}
and
\begin{equation}\label{eq:BCnot3}
(\cA_{k+1},-) = (\base,1),\ (R_1,\natural) \ \text{ or } (\cQ(-1),*)
\end{equation}
(as associative tori).
Corresponding to these 3 choices for $(\cA_{k+1},-)$
we set
\begin{equation*}
\label{eq:BCnot4}
p := 0,\  1 \ \text{ or } 2.
\end{equation*}
Note that the grading group for $(\cA,-)$ is
$L = L_1\oplus \dots \oplus L_{k+2}$, where $L_1,\dots,L_k = \bbZ^2$, $L_{k+1} = \bbZ^p$ and $L_{k+2} = \bbZ^q$.
Moreover the set $L_+$ consisting of the degrees of nonzero homogeneous elements in $\cA_+$   is then determined
by $k$, $p$ and $q$. (For example if $k=1$, $p=1$ and $q\ge 0$, we have
$L_+ = 2L + L_{k+2} + \set{0,\ep_{11}+\ep_{21}, \ep_{12}+\ep_{21},\ep_{11}+\ep_{12} +\ep_{21}}$, where
$\set{\ep_{11},\ep_{12}}$ is a $\bbZ$-basis for $L_1$, and $\set{\ep_{21}}$ is a $\bbZ$-basis for $L_2$.)
Recall that
$D = \diag(d_1,\dots,d_m)$ and that $\delta_i$ is the degree of $d_i$ in $L$.
We note that if
the elements $d_2,\dots,d_m$ are replaced by nonzero scalar multiples ($d_1=1$ is fixed), then $\cL$ is not changed up to isomorphism (in fact bi-isomorphism) \cite[Cor. 6.6.4]{AB}.
Hence, we can view $\cL$ as being constructed from the  parameters
\begin{equation}\label{eq:parBC}
r\ge 1,\ k\ge 0,\ p\in \set{0,1,2},\ q\ge 0,\ m\ge 1 \text{ and } \delta_1,\dots,\delta_m \in L_+.
\end{equation}
The restrictions on the integral parameters $r,k,p,q,m$ are those indicated as well as the additional restriction
\begin{equation}
\label{eq:resBC}
m \ge 5 \ \text{ if }\  (r,k,p) = (1,0,0)
\end{equation}
imposed in Construction \ref{con:class}(BC--B).
The restrictions on the $\delta_i$'s  in $L_+$ are that
\[\delta_1=0 \andd \delta_i +2L \ne \delta_j + 2L \text{ for } i\ne j.\]

\smallskip\par{\bf (C)}: Suppose $\cL = \ssp_{2r}(\cA,-)$ is a special symplectic Lie torus  as in Construction \ref{con:class}(C).
Then we can assume as in (BC--B) above that
\begin{equation}
\label{eq:parC1}
(\cA,-)=  (\cA_1,-) \otimes \dots \otimes (\cA_k,-) \otimes (\cA_{k+1},-) \otimes (R_q,1)
\end{equation}
where $k,q\ge 0$ and $(\cA_1,-),\dots,(\cA_{k+1},-)$ satisfy
\eqref{eq:BCnot2} and \eqref{eq:BCnot3}.
Again, we define
\begin{equation}
\label{eq:parC2}
p = 0,\ 1 \text{ or } 2
\end{equation}
corresponding to the choice of $(\cA_{k+1},-)$ in \eqref{eq:BCnot3}.
Then we can view $\cL$ as constructed from the integral parameters
\begin{equation}
\label{eq:parC3}
r\ge 1,\ k\ge 0,\ p\in \set{0,1,2} \text{ and } q\ge 0,
\end{equation}
subject to the indicated restrictions as well as the additional restriction
\begin{equation}
\label{eq:resC}
(k,p) \ne (0,0), (0,1), (1,0) \quad \text{ if } r=1 \text{ or } 2
\end{equation}
imposed in Construction \ref{con:class}(C).

\smallskip\par{\bf (D)}: Suppose finally that $\cL = \orth_{2r}(R_q)$ is an orthogonal Lie torus as in Construction \ref{con:class}(D), where $q\ge 0$.  Then $\cL$ is constructed from the integral parameters
\begin{equation*}
\label{eq:parD}
r \ge 4 \text{ and } q \ge0.
\end{equation*}

\myhead{The invariants of classical Lie tori}
We can now calculate the invariants of classical Lie tori. These will appear in Tables \ref{tab:classical1} and \ref{tab:classical2}, where we use the parameterizations described above.  In the tables and subsequently, we also use the following additional notation:
\begin{itemize}
\item For each of the four constructions, we define two additional positive integers $d$ and $s$
in the Construction column of Table \ref{tab:classical1}. In the definition of $d$ in (BC--B) and~(C),
$\lfloor \hphantom{a} \rfloor$ is the floor function, so that $d = 2^k$ if $p=0$ or $1$, and $d = 2^{k+1}$ if $p=2$.
\item In the last column of Table \ref{tab:classical1}, the symbol
\ $\widehat{\vphantom{c}}$\, above an entry of a vector indicates that the entry is to be omitted when $r=1$.
\item In Construction (BC--B),
$L/2L$ (resp.~$L/(2L+L_{k+2})$) is  a vector space of dimension $2k+p+q$ (resp.~$2k+p$)
over the field $\bbZ_2$ of integers modulo 2.  In Table~\ref{tab:classical2}, we let $a$ (resp.~$b$)
denote the dimension of the $\bbZ_2$-vector space generated by the cosets represented by
$\delta_1,\dots,\delta_m$ in $L/2L$ (resp.~$L/(2L+L_{k+2})$).
\end{itemize}

\begin{proposition}
\label{prop:classinv}
If $\cL$ is a classical Lie torus depending on parameters as described above, then the root-grading type, the nullity, the centroid rank and the root-space rank vector of
$\cL$ are listed in Table  \ref{tab:classical1}, and the quotient
external-grading group of $\cL$ is listed in Table
\ref{tab:classical2}.
\end{proposition}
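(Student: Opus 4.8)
The proof is a construction-by-construction verification, using two main ingredients. The first is Proposition~\ref{prop:centroidLT}, which identifies the centroid $C$ with the commutative ring $\cZ$ (equal to $Z(\cA)$, $Z(\cA,-)$, $Z(\cA,-)$, $\cA$ in the four cases) acting by left multiplication on entries, and shows $\cL$ is a free $\cZ$-module. The second is passage to the quotient field $\tC$ of $C\isom\cZ$: then $\tL=\tC\ot_C\cL$ is a finite dimensional central simple Lie algebra over $\tC$ with $\dim_\tC(\tL)=\rank_C(\cL)=\crk(\cL)$ by \eqref{eq:rankL}. The root-grading type is read off from Construction~\ref{con:class} (with $\type{B}{1}=\type{C}{1}=\type{A}{1}$ and $\type{C}{2}=\type{B}{2}$), except that in (C) one still has to check that $\De_\base(\cL,\fh)$ is of type $\type{C}{r}$ rather than $\type{BC}{r}$; this comes out of the root-space computation. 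The nullity is $\rank_\bbZ(\Lm)$, which is $2k+q$ in (A), $2k+p+q$ in (BC--B) and (C), and $q$ in (D); in (BC--B) one uses that $\Lm$ lies between the grading group $L$ of $(\cA,-)$ and $\frac{1}{2} L$, so $\Lm/L$ is a finite $2$-group and $\rank_\bbZ(\Lm)=\rank_\bbZ(L)$.

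For the centroid rank I would follow the base-ring-extension argument from the proof of Proposition~\ref{prop:centroidLT}, identifying $\tL$ explicitly as (the derived algebra of) a classical matrix Lie algebra built from $\tC\ot_\cZ\cA$, which is a finite dimensional central simple algebra over $\tC$ or --- in the second-kind subcase of (BC--B) --- over a quadratic extension of $\tC$, and then invoking the standard dimension formulas for such algebras as in \cite[Chap.~X]{J}. Concretely: in (A), $\mathbb{D}:=\tC\ot_\cZ\cA$ is a central division algebra over $\tC$ of degree $d$, where $d^2=\rank_\cZ(\cA)=\prod_i\order{\zeta_i}^2$ by Lemma~\ref{lem:freemodule}(i), so that $d=\prod_i\order{\zeta_i}$; and $\tL=[\Mat_{r+1}(\mathbb{D}),\Mat_{r+1}(\mathbb{D})]$, of $\tC$-dimension $(r+1)^2d^2-1$ because the quotient of $\Mat_{r+1}(\mathbb{D})$ by its Lie commutator subalgebra is $1$-dimensional over $\tC$ (reduced trace, characteristic $0$). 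In (D), $\tL=\orth_{2r}(\tC)$ is split, of dimension $r(2r-1)$. In (BC--B) and (C), $\tC\ot_\cZ(\cA,-)$ is a central simple algebra with involution whose degree is controlled through Lemma~\ref{lem:freemodule}(ii) and \eqref{eq:Zi3}, and $\tL$ is the derived algebra of the associated unitary (resp.\ symplectic) Lie algebra; computing its $\tC$-dimension yields the entry of Table~\ref{tab:classical1}. This is where the auxiliary integers $d$ and $s$ attached to the table arise, and where the low-rank and non-degeneracy restrictions built into the constructions must be respected.

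For the root-space rank vector, Proposition~\ref{prop:rootspacec}(vi) reduces the task to computing $\dim_\tC(\tL_\tal)$ for $\al$ a short, long, or extra-long root of $\De$ --- that is, to root-space dimensions of the explicit central simple Lie algebra $\tL$ relative to $\tfh=\tC\fh$. In (A) every such space is $\mathbb{D}e_{ij}$, of dimension $d^2$, so $\rkv(\cL)=(d^2)$; in (D) the root spaces are $1$-dimensional. In (BC--B) and (C) the block form of the matrix $G$ separates root spaces according to root length, and reading off their dimensions (and noticing which lengths actually occur, which settles the root-grading type claim in (C)) gives the table entries. Finally, since $\Gm(\cL)=\supp_\Lm(C)=\supp_\Lm(\cZ)$ under the graded isomorphism of Proposition~\ref{prop:centroidLT}, the quotient external-grading group equals $\Lm/\Gm(\cA)$ in (A) and (D) --- which is $\bigoplus_i\bbZ_{\order{\zeta_i}}^{\,2}$ in (A) and trivial in (D) --- and $\Lm/\Gm(\cA,-)$ in (BC--B) and (C); in (BC--B) one computes this via $[\Lm:\Gm(\cA,-)]=[\Lm:L]\,[L:\Gm(\cA,-)]$, observing that $\Lm/L\isom\bbZ_2^{\,a}$ (generated by the cosets $\frac{1}{2}\delta_i+L$, which correspond to $\delta_i+2L$, whence the rank $a$), and then assembling the answer in terms of $a$ and $b$ as in Table~\ref{tab:classical2}.

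The main obstacle is case (BC--B). There the grading group $\Lm$ is genuinely larger than the grading group $L$ of $(\cA,-)$ because of the half-degrees $\frac{1}{2}\delta_i$; the involution may be of first or of second kind, which changes whether $\tC\ot_\cZ\cA$ is central over $\tC$ or over a quadratic extension of $\tC$, and hence changes both $\dim_\tC(\tL)$ and the root-space dimensions; type $\type{BC}{r}$ carries three root lengths; and the quotient external-grading group depends in a delicate way on the mutual positions of the $\delta_i$, recorded by the ranks $a$ and $b$. Organising these subcases cleanly, while keeping track of the various low-rank exclusions and the type identifications $\type{B}{1}=\type{C}{1}=\type{A}{1}$, $\type{C}{2}=\type{B}{2}$, is the bulk of the work; the remaining three constructions reduce to comparatively routine dimension counts.
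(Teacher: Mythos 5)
Your outline is sound and reaches all the table entries, but it routes the two rank computations differently from the paper. The paper's proof (which, like yours, is an outline: it works out only the special symplectic case (C) in detail and leaves (A), (BC--B), (D) to the reader) never returns to the central closure for these calculations. Instead it works entirely over the centroid $\cZ$ itself: it writes $\cU = (Z(\cA)\cap\cA_-)I_{2r}\oplus\cL$, computes $\crk(\cL)$ by counting matrix entries as free $\cZ$-modules, namely $(2r^2-r)\rank_\cZ(\cA)+2r\rank_\cZ(\cA_+)-\delta_{1p}$, and reads off $\rkv(\cL)$ directly from the explicit root spaces $\cL_\al$, each of which is visibly a copy of $\cA$ or of $\cA_+$ as a $\cZ$-module; the only nontrivial inputs are the free ranks of $\cA$, $\cA_\pm$ and $Z(\cA)\cap\cA_-$ over $\cZ$, obtained from Lemma \ref{lem:freemodule} and an induction on $k$ over the tensor factors. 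You instead pass to the quotient field via \eqref{eq:rankL} and Proposition \ref{prop:rootspacec}(vi) and invoke the dimension formulas for central simple (associative) algebras with involution from Jacobson, Chapter X. Both routes are valid and the hard numerical content is the same either way --- deciding the kind and type of the induced involution on $\tZ\ot_\cZ\cA$ is exactly equivalent to computing $\rank_\cZ(\cA_\pm)$, which is where the $(-1)^k$ enters --- but the paper's version is the more elementary and ``rational'' one (pure rank counting of graded free modules, no division algebras), whereas yours buys a cleaner conceptual packaging at the cost of importing the structure theory over $\tC$. Your treatment of the nullity, the quotient external-grading group (including the $\bbZ_2^a$ versus $\bbZ_4^b$ bookkeeping in (BC--B)), and your identification of (BC--B) as the genuinely laborious case all match the paper's.
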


\newcommand\T{\rule{0pt}{5ex}}  %Extra space at top of row in tables
\newcommand\B{\rule[-2ex]{0pt}{0pt}} %Extra space at bottom of row in tables

%%Table of classical Lie tori
%%Part 1
\begin{table}[ht]
\renewcommand{\arraystretch}{1}
\setlength\doublerulesep{1pt}
\setlength\extrarowheight{4pt}
\relscale{.87}
\centering
\begin{tabular} 
{p{2.3cm} ||  p{2.7cm} | p{1.4cm} | p{2.1cm} | p{3.7cm} }

%%%%%%%%Heading%%%%%%%%%%
\hspace{0pt}\drop Construction
& Root-grading type of $\cL$
& Nullity of $\cL$
& \drop$\crk(\cL)$
& \drop $\rkv(\cL)$
%&$\Lm/\Gamma(\cL)$
\\
\hline\hline

%%%%%%%%Construction (A)%%%%%%%%%%
$\text{(A)}$  \nl
with\nl
$d = \prod_{i=1}^k \order{\zeta_i}$,\nl
$s = (r+1)d$
&$\type{A}{r}$
& $2k+q$
& $s^2-1$
& $(d^2)$
%& $\bigoplus_{i=1}^k \bbZ_{\ell_i}^2$
\\
\hline

%%%%%%%%Construction (BC--B) %%%%%%%%%%
$\text{(BC--B)}  $\nl
with\nl
$d =  2^{k+\lfloor \frac p2 \rfloor}$,\nl
$s = (2r+m)d$
&$\type{B}{r}$ \nl
	\rtj{if $(k,p)=(0,0);$} \nl\nl
	$\type{BC}{r}$ \nl
	\rtj{if $(k,p)\ne(0,0)$\hphantom{;}}
&$2k+p+q$
&$\displaystyle \T \frac {s(s-(-1)^k)}2  $\nl
	\strut\rtj{if $p\ne 1$;} \nl \nl
	$s^2-1$ \nl
	\rtj{if $p=1$} \nl
&$(m,\widehat{1})$ \nl
	\rtj{if $(k,p) = (0,0)$;} \nl\nl
	$(md^2,\widehat{d^2},\displaystyle \frac {d(d-(-1)^k)}2)$ \nl
	\rtj{if $(k,p) \ne (0,0)$}\nl
	\rtj{and $p\ne 1$;} \nl\nl
	$(2md^2,\widehat{2d^2},d^2)$ \nl
	\rtj{if  $p= 1$}
\\
\hline

%%%%%%%%Construction (C)%%%%%%%%%%
$\text{(C)}$\nl
with\nl
$d =  2^{k+\lfloor \frac p2 \rfloor}$,\nl
$s = 2rd$
&$\type{C}{r}$
& $2k+p+q$
&$\T \displaystyle \frac {s(s+(-1)^k)}2 $  \nl
	\strut\rtj{if $p\ne 1$;} \nl\nl
$s^2-1$ \nl
	\rtj{if $p=1$}
&$(\widehat{d^2},\displaystyle \frac {d(d+(-1)^k)}2)$ \nl
	\rtj{if $p\ne 1$;} \nl\nl
$(\widehat{2d^2},d^2)$ \nl
	\rtj{if $p=1$}
\\
\hline

%%%%%%%%Construction (D)%%%%%%%%
$\text{(D)}$\nl
with\nl
$d = 1$,\nl
$s = 2r$
&$\type{D}{r}$
& $n$
& \T $\displaystyle \frac {s(s-1)}2 $
& $\displaystyle (d^2) = (1)$
%& $\set{0}$
\\
\hline
\end{tabular}
\bigskip
\caption{Invariants of classical Lie tori---Part 1}
\label{tab:classical1}
\end{table}

%Part 2
\begin{table}[ht]
\renewcommand{\arraystretch}{1.3}
\setlength\doublerulesep{1pt}
\setlength\extrarowheight{1pt}
\relscale{.87}
\centering
\begin{tabular} 
{m{2.5cm} || m{3.5cm}  }

%%%%%%%%Heading%%%%%%%%%%
Construction
&$\Lm/\Gamma(\cL)$
\\
\hline\hline
%\whline

%%%%%%%%Construction (A)%%%%%%%%%%
$\text{(A)}$
& $\B \bigoplus_{i=1}^k \bbZ_{\order{\zeta_i}}^2$
\\
\hline

%%%%%%%%Construction (BC--B) %%%%%%%%%%
$\text{(BC--B)}$
&$\bbZ_2^{2k+p+a-2b} \oplus \bbZ_4^b$
\\
\hline

%%%%%%%%Construction (C)%%%%%%%%%%
$\text{(C)}$
& $\bbZ_2^{2k+p}$
\\
\hline

%%%%%%%%Construction (D)%%%%%%%%
$\text{(D)}$
& $\set{0}$
\\
\hline
\end{tabular}
\bigskip
\caption{Invariants of classical Lie tori---Part 2}
\label{tab:classical2}
\end{table}

\begin{proof}  We outline the proof for special symplectic Lie tori.  The interested
reader will be able to supply the missing details in this case
and provide the arguments in the other three cases.   (Admittedly more work is involved for special unitary Lie tori, but the approach is the same.)

Let  $\cL = \ssp_{2r}(\cA,-)$  with the assumptions
and notation as in (C) above, and let
\[\cZ = Z(\cA,-).\]
Then, by Proposition \ref{prop:centroidLT}, $\Mat_{2r}(\cA)$ is a Lie algebra over $\cZ$  under the commutator product and $\cL$ is a $\cZ$-subalgebra of $\Mat_{2r}(\cA)$.

To compute some of the invariants of $\cL$, it will be helpful to work in a larger $\cZ$-subalgebra
$\cU$ of the Lie algebra $\Mat_{2r}(\cA)$.  Let
$G = \left[\begin{smallmatrix} 0&J_r  \\-J_r&0 \end{smallmatrix}\right] \in\Mat_{2r}(\base)$ and
\begin{align}
\notag
\cU &=
\set{X\in \Mat_{2r}(\cA) \suchthat G^{-1}\bar X ^t G = -X}\\
\label{eq:Cinv1}
&=\set{
\begin{bmatrix}
 A& B  \\C& -J_r \bar A^t J_r
\end{bmatrix}
\suchthat
A, B, C \in \Mat_r(\cA),\  J_r \bar B^t J_r = B,\
 J_r \bar C^t J_r = C}.
\end{align}
Then, as we saw in Construction \ref{con:class}(C),
\begin{equation}\label{eq:Cinv2}
\cL
= \set{X\in \cU \suchthat \ \trace(X) \in [\cA,\cA]}
\end{equation}
Also, it is known that $\cA = Z(\cA) \oplus [\cA,\cA]$ (see \cite[Lemma 5.1.3]{AB} for
a proof).  Hence
\begin{equation}\label{eq:Cinv3}
\cA_- = (Z(\cA)\cap \cA_-) \oplus ([\cA,\cA]\cap \cA_-).
\end{equation}
Thus
\begin{equation}\label{eq:Cinv4}
\cU = (Z(\cA)\cap \cA_-)  I_{2r} \oplus \cL
\end{equation}
Indeed the inclusion from right to left is clear, and the reverse inclusion
follows easily using \eqref{eq:Cinv1},  \eqref{eq:Cinv2} and \eqref{eq:Cinv3}.

Next let $\fh = \sum_{i=1}^{r} \base (e_{ii} - e_{2r+1-i,2r+1-i})$.  Recall from  Construction \ref{con:class}(C) that
$\fh$ is a split toral $\base$-subalgebra of $\cL$ with irreducible finite root system
$\De = \Delta_\base(\cL,\fh)$ of type $\type{C}{r}$.
In fact if we define $\ep_i\in \fh^*$ for $1\le i \le r$ by $\ep_i(e_{jj} - e_{2r+1-j,2r+1-j}) = \delta_{ij}$
we have
\begin{equation}
\label{eq:Cinv5}
\De = \set{\ep_i - \ep_j \mid 1\le i \ne j \le r} \cup \set{\pm(\ep_i + \ep_j) \mid 1\le i \le j \le r},
\end{equation}
with
\begin{equation}
\label{eq:Cinv5a}
\begin{aligned}
\cL_{\ep_i-\ep_j} &= \set{ a\, e_{ij} - \bar a\, e_{2r+1-j,2r+1-i} \suchthat a\in \cA}
	&&\text{for }1 \le i \ne j \le r,\\
\quad\cL_{\ep_i+\ep_j} &= \set{ a\, e_{i,2r+1-j} - \bar a\, e_{j,2r+1-i} \suchthat a\in \cA}
	&&\text{for } 1 \le i < j \le r,\\
\quad\cL_{-\ep_i-\ep_j} &= \set{ a\, e_{2r+1-i,j} - \bar a\, e_{2r+1-j,i} \suchthat a\in \cA}
	&&\text{for } 1 \le i < j \le r,\\
\cL_{2\ep_i} &= \set{ h\, e_{i,2r+1-i} \suchthat h\in \cA_+}
	&&\text{for } 1\le i  \le r.
\end{aligned}
\end{equation}
($\cL_0$ is the set of diagonal matrices in $\cL$.)
Recall also that $\cL$ is an fgc centreless Lie torus of type $(\De,\Lm)$ with gradings
determined by $\fh$ and $\cA$, where $\Lm$ is the grading group of $\cA$.  So,
as already observed in  Construction \ref{con:class}(C), the root-grading type of $\cL$
is $\type{C}{r}$.

Next  we have
\[
(\cA,-) =(\cA_1,-) \otimes \dots \otimes (\cA_{k+2},-),
\]
where for convenience we have set $(\cA_{k+2},-) = (R_q,1)$.
Hence, by definition, the grading group of $\cA$ is
\[\Lm = \Lm_1\oplus \dots \oplus \Lm_{k+2},\]
where $\Lm_i$ is the grading group of $\cA_i$ for $1\le i \le k+2$.
But $\Lm_1,\dots,\Lm_k = \bbZ^2$, $\Lm_{k+1} = \bbZ^p$ and $\Lm_{k+2} = \bbZ^q$,
so $\Lm \simeq \bbZ^{2k + p + q}$.  Hence the nullity of $\cL$ is $2k + p + q$.

Now by Proposition  \ref{prop:centroidLT}, we have a graded isomorphism
$\rho : \cZ \to \Cd_\base(\cL)$,
which we now use to identify
\begin{equation}
\label{eq:Cinv6}
\Cd_\base(\cL) =  \cZ.
\end{equation}
Hence, we have
\[\Gm := \Gm(\cL) = \Gm(\cA,-),\]
where,  as in Section \myref{sec:structure}, $\Gm(\cA,-) :=  \supp_\Lm(\cZ)$.

As we observed in Section \myref{sec:structure}, we have
\begin{equation}
\label{eq:Cinv7}
\cZ =\cZ_1 \otimes \dots \otimes \cZ_{k+2},
\end{equation}
where $\cZ_i = Z(\cA_i,-)$.  Hence
$\Gm = \Gm_1\oplus \dots \oplus \Gm_{k+2}$, where
$\Gm_i = \supp_{\Lm_i}(\cZ_i)$ for $1\le i \le k+2$.
One checks that $\Gm_i = 2\Lm_i$ for $1\le i \le k+1$, and clearly
$\Gm_{k+2} = \Lm_{k+2}$,
so
\[\Gm = 2\Lm_1\oplus \dots \oplus 2\Lm_{k+1} \oplus \Lm_{k+2},\]
Thus, $\Lm/\Gm(\cL) = \Lm/\Gm \simeq \bbZ_2^{2k+p}$.

Observe next
that by Lemma \ref{lem:freemodule}(ii),
$\cA$ is a free $\cZ$-module with
\begin{equation}\label{eq:rankZ0}
\rank_{\cZ}(\cA) = [\Lm : \Gm] = 2^{2k+p} =
\begin{cases}
	d^2 &\text{if $p\ne 1$,}\\
	2d^2 &\text{if $p=1$,}
\end{cases}
\end{equation}
where recall that  $d = 2^{k+\lfloor \frac p2 \rfloor}$.
Also, by Lemma \ref{lem:freemodule}(ii),
$\cZ(\cA) \cap \cA_-$, $\cA_+$ and $\cA_-$  are free $\cZ$-modules of  finite rank.
Moreover,
\begin{gather}\label{eq:rankZ1}
\rank_\cZ(\cZ(\cA) \cap \cA_-) = \delta_{1p},\\
\label{eq:rankZ2}
\rank_\cZ(\cA_+) =
\begin{cases}	
	\frac {d(d+(-1)^k)}2 &\text{if $p\ne 1$},\\
	d^2 &\text{if $p=1$},
\end{cases}\\
\label{eq:rankZ3}
\rank_\cZ(\cA_-) =
\begin{cases}	
	\frac {d(d-(-1)^k)}2 &\text{if $p\ne 1$},\\
	d^2 &\text{if $p=1$}.
\end{cases}
\end{gather}
We will justify these equalities at the end of the proof, but for the moment
we assume that they hold and use them to calculate the remaining
invariants.

First
\begin{align*}
\crk(\cL) &= \rank_{\Cd_\base(\cL)}(\cL)= \rank_\cZ(\cL) &&\text{by \eqref{eq:Cinv6}}\\
&= \rank_\cZ(\cM)- \rank_\cZ(Z(\cA)\cap \cA_-) &&\text{by \eqref{eq:Cinv4}}\\
&= \rank_\cZ(\cM)- \delta_{1p} &&\text{by \eqref{eq:rankZ1}}\\
&= r^2 \rank_\cZ(\cA) +
	2\big(\frac{r(r-1)}2 \rank_\cZ(\cA) + r\rank_\cZ(\cA_+)\big)
	- \delta_{1p} &&\text{by \eqref{eq:Cinv1}}\\
&= (2r^2-r) \rank_\cZ(\cA) +2r \rank_\cZ(\cA_+)	- \delta_{1p}.
\end{align*}
If we plug in the expressions  \eqref{eq:rankZ0} and  \eqref{eq:rankZ2}
for $\rank_\cZ(\cA)$ and $\rank_\cZ(\cA_+)$ into this last expression and simplify,
we obtain the values of $\crk(\cL)$ appearing in Table \ref{tab:classical1}.

Also,
\[
\rkv(\cL)
= (\widehat{\rank_{\cZ}(\cL_{\ep_1-\ep_2})},\rank_{\cZ}(\cL_{2\ep_1})) = (\widehat{\rank_\cZ(\cA)},\rank_\cZ(\cA_+))
\]
using \eqref{eq:Cinv5a}.
Again, plugging in our expressions for $\rank_\cZ(\cA)$ and $\rank_\cZ(\cA_+)$,
we obtain the values of $\rkv(\cL)$ appearing in Table \ref{tab:classical1}.

We conclude the proof by justifying \eqref{eq:rankZ1}, \eqref{eq:rankZ2} and \eqref{eq:rankZ3}.

Suppose first that $p\ne 1$.  Then $(\cA,-)$  is of first kind, so we have
\eqref{eq:rankZ1}.
Also, if $p=0$, we have
\[(\cA,-) = \underbrace{
(\cQ(-1),\natural) \otimes \dots \otimes (\cQ(-1),\natural)}_{\text{$k$ factors}}
 \otimes (R_q,1),\]
and  \eqref{eq:rankZ2} and \eqref{eq:rankZ3} and be proved simultaneously by
induction on $k$ using \eqref{eq:Cinv7}.  We leave the details to the reader.
Moreover, the equations \eqref{eq:rankZ2} and \eqref{eq:rankZ3} for the case
$p=2$ can  easily be deduced from the equations \eqref{eq:rankZ2} and \eqref{eq:rankZ3}
for the case $p=0$ (tensor with $(\cQ(-1),*)$).  Again we leave the details to the reader.

Finally, if $p=1$, then  $(\cA,-)$ is of second kind
and our equalities follow from \eqref{eq:Zi2}.
\end{proof}

\begin{remark} \label{rem:Titsclassical}
In this remark, we list the index of each classical Lie torus $\cL$ (see Remark \ref{rem:Titsindex}) using the notation of Table \ref{tab:classical1}.  We computed these indices using the methods outlined in  the proofs of Propositions \ref{prop:centroidLT} and   \ref{prop:classinv}, together with the detailed information
about the indices of classical algebraic groups found in \cite[Table II]{T}.  We omit any details
since, as we have mentioned, we do not use the index in this article.

If $\cL = \spl_{r+1}(\cA)$  is as in (A) above, then $\cL$ has
index $\Tindex{1}{A}{s-1,r}{(d)}$.
Next, if $\cL = \spu_{2r+m}(\cA,-,D) $
as in (BC--B), then $\cL$
has index
\begin{align*}
&\text{$\Tindex{}{C}{\frac s2,r}{(d)}$
	if $p\ne 1$ and $k$ is odd;}\\
&\text{$\Tindex{}{B}{\frac {s-1}2,r}{}$
	if $p \ne 1$, $(k,p) = (0,0)$ and $m$ is odd;}\\
&\text{$\Tindex{t}{D}{\frac s2,r}{(d)}$
	if $p \ne 1$, $k$ is even and either $(k,p) \ne (0,0)$ or $m$ is even; and}\\
&\text{$\Tindex{2}{A}{s-1,r}{(d)}$
	if $p=1$.}
\end{align*}
(In the second last case $t=1$ or $2$, and one can write down necessary and sufficient conditions involving the parameters \eqref{eq:parBC} for $t$ to be $1$.)
Further,  if $\cL = \ssp_{2r}(\cA,-)$ as in (C), then $\cL$
has index
\begin{align*}
&\text{$\Tindex{1}{D}{\frac s2,r}{(d)}$
	if $p\ne 1$ and $k$ is odd;}\\
&\text{$\Tindex{}{C}{\frac s2,r}{(d)}$
	if $p\ne 1$ and $k$ is even;
and}\\
&\text{$\Tindex{2}{A}{s-1,r}{(d)}$
	if $p=1$.}
\end{align*}
Finally, if $\cL = \orth_{2r}(\cA)$ as in (D), then
$\cL$ has index  $\Tindex{1}{D}{\frac s2,r}{(1)} = \Tindex{1}{D}{r,r}{(1)}$.
\end{remark}

\section{The isomorphism problem}
\label{sec:isomorphism}

To provide a classification of fgc centreless Lie tori up to isomorphism,
it remains to solve the isomorphism problem for fgc centreless Lie tori as they are
described in Theorem \ref{thm:structure}.

In  this section, we describe the results about the isomorphism problem that we can deduce using our isomorphism invariants and their values listed in Tables \ref{tab:exceptional} and \ref{tab:classical1}.

\begin{theorem}\
\label{thm:disjoint}
\begin{itemize}
\item [(i)]  The classes of classical Lie tori and exceptional Lie tori are disjoint.
That is, there is no Lie algebra that is isomorphic to a classical Lie torus and to an exceptional
Lie torus.
\item [(ii)] The classes of classical Lie tori obtained using constructions (A), (BC--B), (C) and (D)
are pairwise disjoint.
\end{itemize}
\end{theorem}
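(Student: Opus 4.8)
The plan is to prove both parts by comparing the isomorphism invariants catalogued in Theorem~\ref{thm:fourinv}---the root-grading type, the nullity, the centroid rank, and the root-space rank vector---using the explicit values assembled in Tables~\ref{tab:exceptional} and~\ref{tab:classical1}. Since Theorem~\ref{thm:structure} tells us every fgc centreless Lie torus is bi-isomorphic to a classical or an exceptional one, and since these four quantities depend only on the isomorphism class (not the graded structure), two Lie tori appearing on different lines of the tables with differing invariant data cannot be isomorphic. Thus the entire argument reduces to a finite-plus-parametrized case check: for each pair (exceptional row, classical construction) and for each pair of distinct classical constructions, I must exhibit at least one invariant that cannot agree.

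\textbf{Part (i).} First I would organize the exceptional Lie tori by root-grading type and, for each type, match against those classical constructions that can produce that type. Types $\type{E}{6},\type{E}{7},\type{E}{8},\type{F}{4},\type{G}{2}$ never arise from any classical construction (which yield only types $\type{A}{}$, $\type{B}{}$, $\type{BC}{}$, $\type{C}{}$, $\type{D}{}$), so exceptional Lie tori \#4--14 are immediately disjoint from all classical ones by root-grading type alone. This leaves \#1 (type $\type{A}{1}$), \#2 (type $\type{A}{2}$), \#3 (type $\type{C}{3}$), and \#15--27 (types $\type{BC}{1}$, $\type{BC}{2}$). For each of these I compare with the relevant classical construction: for instance \#1 has $(\crk,\rkv)=(133,(27))$, while a type-$\type{A}{1}$ special linear torus $\spl_2(\cA)$ has $\crk = s^2-1$ and $\rkv = (d^2)$ with $s=2d$, forcing $133 = 4d^2-1$, impossible over $\bbZ$; similar arithmetic obstructions (one invariant equation with no integer solution) dispatch \#2, \#3. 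For \#15--27 of types $\type{BC}{r}$, I compare against $\spu_{2r+m}(\cA,-,D)$; here the root-space rank vector and centroid-rank formulas in Table~\ref{tab:classical1} must be matched against the tabulated vectors, and in each case a divisibility or size obstruction (e.g.\ the middle or last entry of $\rkv$ forcing $d$ or the derived value of $s$ to be non-integral, or $\crk$ not fitting the quadratic-in-$s$ shape) rules out equality. The point is that each check is a short elementary arithmetic verification.

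\textbf{Part (ii).} The six pairwise comparisons among (A), (BC--B), (C), (D) are handled first by root-grading type: (A) gives type $\type{A}{r}$, (D) gives type $\type{D}{r}$, so (A) is disjoint from (BC--B), (C), (D), and (D) is disjoint from (BC--B), (C), except I must be careful about the small-rank coincidences flagged in Construction~\ref{con:class} ($\type{B}{1}=\type{C}{1}=\type{A}{1}$, $\type{C}{2}=\type{B}{2}$, $\type{B}{r}$ versus $\type{BC}{r}$). So the real content is: (BC--B) with $(k,p)=(0,0)$ (type $\type{B}{r}$) versus (C) (type $\type{C}{r}$), and the overlap in rank~1 and~2 cases. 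For $r\ge 3$ these are distinct types and we are done; for $r=1,2$ the restrictions \eqref{eq:resBC} and \eqref{eq:resC} were imposed precisely to eliminate overlap, so I would invoke those and then distinguish the remaining genuine $\type{B}{r}$-versus-$\type{C}{r}$ cases by comparing centroid rank (the $\frac{s(s-(-1)^k)}{2}$ versus $\frac{s(s+(-1)^k)}{2}$ shapes, together with the differing $s$) or the root-space rank vector (which for (BC--B) with $(k,p)=(0,0)$ has first entry $m\ge 2$ and second entry~$1$, a shape not matching (C)).

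The main obstacle I anticipate is bookkeeping rather than depth: keeping straight the small-rank identifications of root-system types, and verifying that in every one of the roughly two-dozen exceptional-versus-classical comparisons some invariant genuinely fails to match---this requires solving the relevant Diophantine equation (typically of the form $c = s^2-1$ or $c = \tfrac{s(s\mp1)}{2}$ with $s$ a specified multiple of a power of $2$ or of $\prod|\zeta_i|$) and confirming no solution exists, for each line of Table~\ref{tab:exceptional}. A secondary subtlety is that for type $\type{BC}{r}$ the special unitary construction has a flexible parameter $m$, so ruling out a match there must use the full rank vector $(md^2, \widehat{d^2}, \tfrac{d(d-(-1)^k)}{2})$ and the centroid rank simultaneously, not just one of them. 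I would present the argument as: a reduction to invariant-comparison via Theorems~\ref{thm:fourinv} and~\ref{thm:structure}, then a type-by-type table walk, with the arithmetic details for each nontrivial case either displayed compactly or left to the reader as routine.
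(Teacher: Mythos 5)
Your strategy for part (i) is essentially the paper's: reduce everything to a comparison of the four invariants of Theorem \ref{thm:fourinv} via Tables \ref{tab:exceptional} and \ref{tab:classical1}, and dispose of each potential coincidence by an elementary Diophantine check. The paper organizes this more economically (it first observes that $\crk(\cL)$ would have to lie in $\set{14,28,52,78,133,248}$ \emph{and} have one of the shapes $s^2-1$ or $s(s\pm1)/2$, which kills most rows at once), but your exhaustive type-by-type walk reaches the same place, provided you remember that an exceptional torus of root-grading type $\type{A}{1}$ (row \#1) must be tested against all three classical sources of that type --- construction (A) with $r=1$, construction (BC--B) with $(r,k,p)=(1,0,0)$, and construction (C) with $r=1$ --- and not only against $\spl_2(\cA)$.

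Part (ii), however, contains a genuine gap. You assert that after invoking the exclusions \eqref{eq:resBC} and \eqref{eq:resC}, the surviving coincidences of root-grading type can all be separated by the centroid rank or the root-space rank vector. That is false for the comparison of construction (A) with $r=1$ against construction (C) with $r=1$ and an involution of second kind. Take $\cL=\spl_2(\cQ(-1)\ot R_{q+1})$ and $\cL'=\ssp_2(\cA',-)$ with $(\cA',-)=(\cQ(-1),\natural)\ot(R_1,\natural)\ot(R_q,1)$: the latter is admissible under \eqref{eq:resC} since $(k',p')=(1,1)$, and both algebras have root-grading type $\type{A}{1}$, nullity $q+3$, centroid rank $15$ and root-space rank vector $(4)$. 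No amount of table comparison can distinguish them, so your proposed method cannot close this case. The paper handles exactly this situation by a different mechanism: it passes to the central closures, identifies them as $\spl_2$ of a central division algebra and $\ssp_2$ of a central division algebra with involution with $d'\ge 2$ respectively, and invokes the last statement of Theorem X.11 of \cite{J} (alternatively, one can compare Tits indices via Remark \ref{rem:Titsclassical}). Some such non-elementary step, going beyond the four rational invariants, is unavoidable here.
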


\begin{proof} (i): Suppose for contradiction that $\cL$ is a Lie algebra that is isomorphic
to a classical Lie torus and an exceptional Lie torus.  Then, comparing Tables \ref{tab:exceptional} and \ref{tab:classical1}, we see that $\cL$ has root-grading type  $\type{A}{1}$, $\type{A}{2}$ or
$\type{C}{3}$.  Hence, by Table \ref{tab:exceptional}, $\crk(\cL) = 78$ or $133$.  But from
Table  \ref{tab:classical1}, we see that $\crk(\cL)$ has the form
$s^2-1$ or $\frac {s(s\pm 1)}2$ for some positive integer $s$.  This rules
out $\crk(\cL)=133$, so $\crk(\cL) = 78$.  Hence by
Table \ref{tab:exceptional}, $\cL$ has root grading type $\type{A}{2}$.
Thus, by Table  \ref{tab:classical1}, $\crk(\cL) = s^2-1$ for some positive integer $s$, so
$78 = s^2-1$.
This is a contradiction.\footnote{(i) can also be seen by comparing indices or, in
all but one case, absolute types (see for example \cite[\S 3.3]{ABP3} for this terminology).}

(ii):  Suppose for contradiction that there is a Lie algebra that is isomorphic
to Lie tori $\cL$ and $\cL'$ coming from two different constructions from the list
(A), (BC--B), (C) and (D).
We will use the notation of Section \ref{sec:invclass} for $\cL$
and corresponding primed notation for $\cL'$.
Since the root-grading type is an isomorphism invariant, we see from
Table  \ref{tab:classical1} that we must have one of the following (up to an exchange of $\cL$ and $\cL'$):
\begin{itemize}
\item[(a)] $\cL$ arises from  (A) with $r=1$, and $\cL'$ arises from (BC--B) with $(r',k',p') = (1,0,0)$;
\item[(b)] $\cL$ arises from  (A) with $r=1$, and $\cL'$ arises from  (C) with $r'=1$;  or
\item[(c)] $\cL$ arises from  (BC--B) with $(r,k,p) = (2,0,0)$, and $\cL'$ arises from  $(C)$ with $r'=2$.
\end{itemize}

Suppose first that (a) holds.  Then we have
$r=1$ and $s=2d$ in construction (A);  and we have $d'=1$  and $s' = m'+2$ in
construction (BC--B).  Comparing the root-space rank vectors in Table   \ref{tab:classical1}
we see that $d^2 = m'$, whereas comparing centroid-rank vectors we see that
$4d^2-1 = \frac {s'(s'-1)}2 = \frac {(m'+2)(m'+1)}2$.  So
$4m' -1 =  \frac {(m'+2)(m'+1)}2$ which forces $m'=1$ or $4$.
But these values of $m'$ were excluded in~\eqref{eq:resBC}.

Suppose next that (b) holds.
Then arguments (which we leave to the reader) that are similar to the one in (a) handle all but one case:
\[\text{$\cL = \spl_2(\cA)$ and $\cL' =  \ssp_2(\cA',-)$ with $p' = 1$.}\]
In this case, we have
$k'\ne 0$ by \eqref{eq:resC}, so $d' \ge 2$.
We now indicate,  omitting the details, how this leads to a contradiction using
base-ring extension and a theorem about finite dimensional central simple Lie algebras from \cite{J}.
(Alternatively, one could compare the indices of both sides using  Remark
\ref{rem:Titsclassical}.)   Let
$\cZ = Z(\cA)$ and $\cZ' = Z(\cA',-)$,
with quotient fields $\tZ$ and $\tZp$ respectively.
Now, as in the proof of Proposition \ref{prop:classinv}, we see that
$\cA$ has rank $d^2$ over $\cZ$ and $\cA'$ has rank $2{d'^2}$ over $\cZ'$.
Moreover, as in the proof of Proposition \ref{prop:centroidLT}, we see that
that $\tZ \otimes_{\cZ} \cA$ is a division
algebra of dimension $d^2$ over its centre $\tZ$, and $(\tZp \otimes_{\cZ'} \cA,-)$ is a division algebra with involution of dimension $2{d'}^2$ over its centre  $\tZp$ (as an algebra with involution).
Also, again as in the proof of Proposition \ref{prop:centroidLT}, we see that the central closures
of $\cL$ and $\cL'$ are respectively $\spl_2(\tZ \otimes_\cZ \cA)$ and
$\ssp_2(\tZp \otimes_{\cZ'} \cA,-)$.  (These last two Lie algebras are defined
as in Constructions \ref{con:class}(A) and (C).) So, by Remark \ref{rem:closureisom}, we have
$\spl_2(\tZ \otimes_{\cZ} \cA) \simeq \ssp_2(\tZp \otimes_{\cZ'} \cA,-)$.
Since $d' \ge 2$, this contradicts the last
statement in Theorem X.11 of \cite{J}.

Finally,  case (c) is handled easily using the method in (a)
and the   exclusion~\eqref{eq:resC}.\end{proof}

Combining Theorems \ref{thm:structure} and \ref{thm:disjoint},  we see that the isomorphism problem for fgc centreless Lie tori  reduces to 5 separate problems, one for exceptional Lie tori and one for each of the four Constructions (A), (BC--B), (C) and (D) of classical Lie tori.

The next theorem solves the isomorphism problem for Constructions (C) and~(D).

\begin{theorem}\
\label{thm:isomorphismCD}
\begin{itemize}
\item [(i)]  Let
$P_C$ be the set of all vectors $(r,k,p,q)$ in $\bbZ^4$ such that
$r\ge 1$, $k\ge 0$, $p\in \set{0,1,2}$, $q\ge 0$ and if, $r=1$ or $2$,
$(k,p) \notin \set{(0,0), (0,1), (1,0)}$.  Then the map that sends $(r,k,p,q)$ to the isomorphism class represented by $\ssp_{2r}(\cA,-)$,
where $(\cA,-)$ is the tensor product of basic associative tori with involution constructed from $(k,p,q)$ as in  \eqref{eq:parC1}--\eqref{eq:parC3}, is a bijection from $P_C$ onto the set of isomorphism classes of special symplectic Lie tori.
Moreover, special symplectic tori are classified by their root-grading type, nullity and root-space rank vector.
That is, two special symplectic Lie tori are isomorphic if and only if they have same root-grading type, the same nullity
and the same root-space rank vector.
\item [(ii)]  Let
$P_D = \set{(r,n)\in \bbZ^2 \suchthat r\ge 4,\ n\ge 0}$.
Then the map that sends $(r,n)$ to the isomorphism class represented by $\orth_{2r}(R_n)$ is a bijection
from $P_D$ onto the set of isomorphism classes of orthogonal Lie tori.
Moreover, orthogonal Lie tori are classified by their root-grading type and nullity.
\end{itemize}
\end{theorem}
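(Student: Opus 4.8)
The plan is to treat the two parts separately; part (ii) is almost immediate, while part (i) reduces to elementary number-theoretic bookkeeping. For part (ii): as observed in Construction \ref{con:class}(D), $\orth_{2r}(R_n)$ is the untwisted Lie torus of type $(\type{D}{r},\bbZ^n)$, so its root-grading type is $\type{D}{r}$ and its nullity is $n$; both are isomorphism invariants by Theorem \ref{thm:fourinv}. Since $r\ge 4$, the type $\type{D}{r}$ determines $r$ and the nullity determines $n$, so the assignment $(r,n)\mapsto[\orth_{2r}(R_n)]$ is injective, and it is surjective by the definition of an orthogonal Lie torus. This gives the bijection $P_D\to\{\text{isomorphism classes}\}$ and shows that the two invariants classify.

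For part (i), I would first record that $\ssp_{2r}(\cA,-)$ depends, up to isomorphism, only on $(r,k,p,q)$: by the normal form recalled in Section \ref{sec:invclass} (see \eqref{eq:parC1}--\eqref{eq:parC3}), $(\cA,-)$ is determined up to isograded-isomorphism by $(k,p,q)$. Conversely every special symplectic Lie torus is $\ssp_{2r}(\cA,-)$ for such a normalized $(\cA,-)$, and the restrictions imposed in Construction \ref{con:class}(C) are precisely the condition $(r,k,p,q)\in P_C$; hence the parameterization map is well defined and surjective. It remains to prove it injective and to prove that the three stated quantities classify. Both follow from two facts: (a) the root-grading type, the nullity and the root-space rank vector are isomorphism invariants (Theorem \ref{thm:fourinv}); and (b) the map sending $(r,k,p,q)\in P_C$ to the triple consisting of root-grading type, nullity and $\rkv$, as computed in Table \ref{tab:classical1}, is injective. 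So part (i) comes down to (b); note that (b) will also exhibit the centroid rank as a redundant invariant here.

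To prove (b), given the triple I would recover $(r,k,p,q)$ in order. The root-grading type $\type{C}{r}$ (to be read as $\type{A}{1}$ when $r=1$ and $\type{B}{2}$ when $r=2$) determines $r$. Next one reads off from $\rkv$ whether $p=1$: by Table \ref{tab:classical1}, for $r\ge2$ the entries of $\rkv$ are $(d^2,\tfrac{d(d+(-1)^k)}{2})$ if $p\ne1$ and $(2d^2,d^2)$ if $p=1$, and one entry is exactly twice the other precisely when $p=1$, since $\tfrac{2d}{d+(-1)^k}\ne2$ always; for $r=1$ the single entry of $\rkv$ is $4^k$ if $p=1$ and $\tfrac{d(d\pm1)}{2}$ if $p\ne1$, and a short check using the exclusion \eqref{eq:resC} shows the latter is never a power of $4$ for admissible parameters, so $p=1$ is again recognizable. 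If $p=1$ then $\rkv$ gives $d^2=4^k$, hence $k$. If $p\ne1$ then $\rkv$ gives $d$ (a power of $2$, say $d=2^m$) and, once $d$ is known, the sign $(-1)^k$, hence the parity of $k$; since $d=2^{k+\lfloor p/2\rfloor}$ the only two candidates are $(p,k)=(0,m)$ and $(p,k)=(2,m-1)$, and these have opposite parities of $k$, so $(k,p)$ is pinned down (the boundary value $m=0$ forces $(k,p)=(0,0)$, admissible only for $r\ge3$, and the remaining low-rank coincidences are exactly the pairs removed by \eqref{eq:resC}). Finally $q=\text{nullity}-2k-p$. This proves (b), and hence all of part (i).

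The only genuine work is the verification in the previous paragraph: establishing (b) is a matter of the elementary number theory indicated --- powers of $2$, $2$-adic valuations, and the fact that $\tfrac{d(d\pm1)}{2}$ can be a power of $4$ only for configurations excluded by \eqref{eq:resC} --- together with a finite case check that these exclusions remove precisely the parameter pairs that would otherwise yield coinciding invariants. Everything else is formal, relying on Theorem \ref{thm:fourinv} and the normal-form discussion of Section \ref{sec:invclass}.
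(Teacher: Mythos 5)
Your proposal is correct and follows essentially the same route as the paper: both parts reduce to showing that the map from the parameter set to the triple (root-grading type, nullity, root-space rank vector) computed in Table \ref{tab:classical1} is injective, using Theorem \ref{thm:fourinv} and the exclusions \eqref{eq:resC} to dispose of the low-rank degeneracies. The paper packages your $2$-adic bookkeeping into the auxiliary function $f(k,p)$ and the two properties \eqref{eq:fprop1}--\eqref{eq:fprop2} (left to the reader there), which is exactly the verification you carry out explicitly.
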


\begin{proof}
(i): Before beginning, let $\bbN = \set{k\in \bbZ \suchthat k\ge 1}$, $S = \bbN\times \set{0,1,2}$, and
define $f : S \to \bbN$ by
\[f(k,p) =  \begin{cases}	
	 2^{k+\frac p2 -1}(2^{k+\frac p2} + (-1)^k) &\text{if $p\ne 1$},\\
	 2^{2k} &\text{if $p=1$}.
\end{cases}\\ \]
It is not difficult to show that if $(k,p)\in S$ then
\begin{equation}
\label{eq:fprop1}
f(k,p) = 1 \iff (k,p)\in \set{(0,0), (1,0), (0,1)};
\end{equation}
and that
\begin{equation}
\label{eq:fprop2}
f|_{S\setminus\set{(0,0), (1,0), (0,1)}} \text{ is one-to-one}.
\end{equation}
We leave these facts for the reader to check.

Now to begin the  proof of (i), observe that the map described in the first statement of (i) is surjective by the discussion of parameterization in Section \ref{sec:invclass}.

Next suppose that $(r,k,p,q)$ and $(r',k',p',q')$ are in $P_C$.
We let   $\cL = \ssp_{2r}(\cA,-)$,
where $(\cA,-)$ is the tensor product of basic associative tori with involution constructed from $(k,p,q)$
as in  \eqref{eq:parC1}--\eqref{eq:parC3}, and we let
$\cL' = \ssp_{2r'}(\cA',-)$, where $(\cA',-)$  is obtained in the same way from $(k',p',q')$.
Observe that by Table \ref{tab:classical1}, we have
\[\rkv(\cL) = (\widehat{2^{2k+p}},f(k,p)),\]
and we have a similar expression for $\rkv(\cL')$.

We will show that the following statements are equivalent:
\begin{itemize}
  \item[(a)] $\cL \simeq \cL'$,
  \item[(b)]  $\cL$ and $\cL'$ have the same root-grading type, the same nullity and the same root-space rank vector,
  \item[(c)]  $(r,k,p,q)=(r',k',p',q')$.
\end{itemize}
Note that this will compete the proof of both of the statements in (i).

Now ``(a) $\Rightarrow$ (b)'' holds by Theorem \ref{thm:fourinv}, and ``(c) $\Rightarrow$ (a)'' is trivial.  Thus it suffices
to show that ``(b) $\Rightarrow$ (c)''.  So, suppose that (b) holds.  Then
$r=r'$,
\begin{gather}
\label{eq:Cclass1}
2k+p+1 = 2k' + p' + q',\\
\label{eq:Cclass2}
f(k,p) = f(k',p'), \text{ and }\\
\label{eq:Cclass3}
2^{2k+p} = 2^{2k'+p'} \text{ if } r\ge 2.
\end{gather}

Suppose first that $(k,p)\in \set{(0,0), (1,0), (0,1)}$.
Then, $f(k,p) = 1$ by \eqref{eq:fprop1}, so $(k',p')\in \set{(0,0), (1,0), (0,1)}$
by \eqref{eq:fprop1} and \eqref{eq:Cclass2}.  But $r\ge 3$ by definition of $P_C$,
so by \eqref{eq:Cclass3} we have $2k+p = 2k'+p'$.  Hence $(k,p) = (k',p')$.
Finally, by \eqref{eq:Cclass1}, we have $q = q'$.

Lastly, suppose that $(k,p)\notin \set{(0,0), (1,0), (0,1)}$, so by the argument just given
$(k',p')\notin \set{(0,0), (1,0), (0,1)}$.  Thus, by
\eqref{eq:Cclass2} and \eqref{eq:fprop2}, we have $(k,p) = (k',p')$, and therefore also
$q = q'$ as above.

(ii): Since $\orth_{2r}(R_n) \simeq \orth_{2r}(\base)\otimes R_n$, this follows from well-known facts about  multiloop algebras (see for example \cite[Cor.~8.19]{ABP3}).  From our point of view here, it also follows immediately
using the argument in (i) and Table \ref{tab:classical1}.
\end{proof}

\begin{corollary}  Fix $r\ge 3$.  Then the fgc centreless Lie tori of type $\type{C}{r}$
are classified by their nullity and root-space rank vector.
\end{corollary}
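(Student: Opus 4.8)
The plan is to deduce the corollary from Theorem~\ref{thm:isomorphismCD}(i) after eliminating the exceptional Lie tori. The ``only if'' direction is immediate from Theorem~\ref{thm:fourinv}. For the converse, suppose $\cL$ and $\cL'$ are fgc centreless Lie tori of type $\type{C}{r}$ with $r\ge 3$ having the same nullity and the same root-space rank vector. By Theorem~\ref{thm:structure} each is isomorphic to a classical or exceptional Lie torus, and since the root-grading type, nullity and $\rkv$ are isomorphism invariants by Theorem~\ref{thm:fourinv}, these models are again of type $\type{C}{r}$ and share the given nullity and root-space rank vector; so it is enough to prove the statement for classical and exceptional models.

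First I would pin down the possible models. Comparing root-grading types across Construction~\ref{con:class}, and recalling the conventions $\type{B}{1}=\type{C}{1}=\type{A}{1}$ and $\type{C}{2}=\type{B}{2}$, a classical Lie torus of type $\type{C}{r}$ with $r\ge 3$ can come only from Construction~(C), that is, it is a special symplectic Lie torus. By Table~\ref{tab:exceptional} the only exceptional Lie torus of type $\type{C}{r}$ is \#3, which has type $\type{C}{3}$ (one in each nullity $\ge 3$). Hence for $r\ge 4$ both models are special symplectic, and Theorem~\ref{thm:isomorphismCD}(i) --- the two tori then have the same root-grading type, nullity and root-space rank vector --- gives $\cL\simeq\cL'$.

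The case $r=3$ is the only real obstacle, since there \#3 is also available. If both models are special symplectic, Theorem~\ref{thm:isomorphismCD}(i) applies as before; if both are \#3, they have the same nullity and hence are the same Lie torus. It therefore remains to exclude the mixed case, in which one model is a special symplectic Lie torus of type $\type{C}{3}$ but has root-space rank vector $(8,1)$, the value recorded for \#3 in Table~\ref{tab:exceptional}. I would rule this out numerically using Table~\ref{tab:classical1}: for $r=3$ no entry of the vector is suppressed, so the root-space rank vector of such a torus is $(d^2,\frac{d(d+(-1)^k)}{2})$ when $p\ne 1$ and $(2d^2,d^2)$ when $p=1$, where $d$ is a power of $2$; but $d^2=8$ has no solution and $2d^2=8$ forces $d^2=4\ne 1$, so neither possibility equals $(8,1)$. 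Thus the mixed case cannot occur, $\cL\simeq\cL'$ in every case, and the corollary follows. (This last comparison is in effect contained in Theorem~\ref{thm:disjoint}(i).)
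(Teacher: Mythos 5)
Your proposal is correct and follows essentially the same route as the paper: split into the cases where both models are classical (hence special symplectic, handled by Theorem \ref{thm:isomorphismCD}(i)), both exceptional (only \#3, forced to coincide by nullity), or mixed, and rule out the mixed case by checking that the vector $(8,1)$ from Table \ref{tab:exceptional} is incompatible with the root-space rank vectors in Table \ref{tab:classical1}. The paper's proof is just a terser version of the same argument, leaving the numerical contradiction implicit where you spell it out.
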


\begin{proof}  Suppose that $\cL$ and $\cL'$ are fgc centreless Lie tori of type $\type{C}{r}$ with the same  nullity and the same root-space rank vector.  If $\cL$ and $\cL'$ are classical, we have $\cL \simeq \cL'$
by Theorem \ref{thm:isomorphismCD}(i).  On the other hand if $\cL$ and $\cL'$ are exceptional, then $r=3$ and $\cL \simeq \cL'$ by Table \ref{tab:exceptional}.  Finally, if $\cL$ is exceptional and $\cL'$ is classical, then $r=3$
and $\rkv(\cL') = \rkv(\cL) = (8,1)$  by Table \ref{tab:exceptional}, which contradicts Table~\ref{tab:classical1}.
\end{proof}

\myhead{The remaining isomorphism problems}
The 3 remaining isomorphism problems are now listed, together with some comments.  We will say more about each of these problems in the next section.

(1)  \emph{The isomorphism problem for exceptional Lie tori.} It follows looking at root-space rank vectors in Table \ref{tab:exceptional} that
the only possible isomorphisms between  Lie tori
of a given root-grading type and nullity are between
the tori numbered 20, 22 and 23, or between the tori
numbered 21 and 24.   \emph{So it remains to decide if any such
isomorphisms exist}.  Note however that the tori numbered 20, 22 and 23
have distinct quotient external-grading groups, as do
the tori numbered 21 and 24.  Therefore, the exceptional Lie tori
listed in Table \ref{tab:exceptional} are pairwise not isotopic.

(2) \emph{The isomorphism problem for special linear Lie tori.}
 The Lie tori in construction (A) are not classified
by the four  isomorphism invariants from Theorem \ref{thm:fourinv}, even in
nullity 2.\footnote{For example, it follows from \cite[Thm.~11.3.2]{ABP3} that if $\zeta$
is a 5th root of unity, then $\spl_{r+1}(\cQ(\zeta))$ and $\spl_{r+1}(\cQ(\zeta^2))$  are not isomorphic.  However,
by Table \ref{tab:classical1}, they each have nullity 2, root-grading type $\type{A}{r}$,  centroid rank $25(r+1)^2-1$ and root-space rank vector (25).}
(The index adds no extra information;
that is, if the four invariants
match for two special linear Lie tori, one can check that the
indices match as well.)
However, it is clear that two special linear Lie tori that have the same root-grading type, the same nullity and the same root-grading
rank vector,
have a common $r$, a common nullity for their coordinate associative tori $\cA$ and $\cA'$, and a common
rank for $\cA$ and $\cA'$  as modules over their centres.  So these quantities can be
fixed  when considering the problem.

(3) \emph{The isomorphism problem for special unitary Lie tori.}
The Lie tori in construction (BC--B) are not classified
by the four  isomorphism invariants from Theorem \ref{thm:fourinv}, even in
nullity 3.  (Again, including the index does not provide enough information for classification.)
However, one can check (using the argument in the proof of Theorem \ref{thm:isomorphismCD}(i))) that with one exception,
two special unitary Lie tori that have the same root-grading type, nullity, centroid rank and root-grading
rank vector have a common $r$, a common coordinate associative torus with involution $(\cA,-)$
up to isomorphism, and a common value for $m = |D|$.  So these entities
can be fixed when considering the problem.

In summary, the classification of fgc centreless Lie tori up  to isomorphism is
reduced to the  separate
isomorphism problems for (1) five particular exceptional Lie tori, (2)  special linear Lie tori,  and (3) special unitary Lie   tori.

\section{Conjugacy and its implications}
\label{sec:conjimp}
In this  section  we consider the three isomorphism
problems just discussed under a conjugacy assumption for certain maximal split toral
$\base$-subalgebras.

\myhead{A conjugacy assumption}
We say that an fgc centreless Lie algebra $\cL$
satisfies \emph{Assumption (C)} if the following holds:
\begin{itemize}
\item[(C)]
If $\cL$  has the  graded structure
$\cL = \bigoplus_{(\al,\lm)\in Q(\Delta)\times\Lm}\dL{\al}{\lm}$ of
a Lie torus of type $(\De,\Lm)$ with $\fh = \cL_0^0$
and if (the same) $\cL$ has
the graded structure $\cL = \bigoplus_{(\al',\lm')\in Q(\Delta')\times\Lm'}{\cL'}_{\al'}^{\lm'}$  of
a Lie torus of type $(\De',\Lm')$ with $\fh' = {\cL'}_0^0$,
then there exists $\ph\in \Aut(\cL)$ such that $\ph(\fh) = \fh'$.
\end{itemize}

Less precisely, this assumption says that  two maximal split toral  $\base$-subalge\-bras
of $\cL$ that arise from Lie torus structures on $\cL$ are conjugate under the action of $\Aut(\cL)$.

\begin{remark}
\label{rem:CGP}  Our
motivation for making Assumption (C) in the theorems below is
work in progress by V.~Cher\-nou\-sov, P.~Gille and A.~Pianzola \cite{CGP}.  This work will show that
Assumption (C) holds for \emph{any} fgc centreless Lie torus and therefore the assumption
is superfluous.
It is already known that this is the case for untwisted Lie tori  \cite{P1}.
\end{remark}

An immediate consequence of Assumption (C) together with Theorem  \ref{thm:isotopychar}
is the following:

\begin{theorem}
\label{thm:isis}  Suppose that
$\cL$  and $\cL'$ are fgc centreless Lie tori satisfying Assumption (C). Then
$\cL$ is isomorphic to $\cL'$ if and only if $\cL$ is isotopic to $\cL'$ .
\end{theorem}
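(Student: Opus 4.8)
The plan is to prove the two implications separately. The implication ``isotopic $\Rightarrow$ isomorphic'' is immediate and uses no hypothesis: by definition an isotopy $\cL \to \cL'$ is in particular a $\base$-algebra isomorphism, so if $\cL$ is isotopic to $\cL'$ then it is isomorphic to $\cL'$.

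For the converse, suppose $\ph : \cL \to \cL'$ is a $\base$-algebra isomorphism, and fix Lie torus gradings on $\cL$ and $\cL'$ of types $(\De,\Lm)$ and $(\De',\Lm')$ respectively, with $\fh = \cL_0^0$ and $\fh' = {\cL'}_0^0$ as in Section \ref{sec:cssLT}. First I would use $\ph$ to transport the Lie torus grading of $\cL$ onto $\cL'$: declare $\ph(\cL_\al^\lm)$ to be the homogeneous component of degree $(\al,\lm) \in Q(\De)\times \Lm$ of a new grading on $\cL'$. Since $\ph$ is an algebra isomorphism and the axioms (LT1)--(LT4) are formulated entirely in terms of the algebra structure together with the grading, this new grading makes $\cL'$ a Lie torus of type $(\De,\Lm)$, and its degree-$(0,0)$ component is precisely $\ph(\fh)$. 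Thus $\cL'$ now carries two Lie torus structures, one with distinguished subalgebra $\fh'$ and one with distinguished subalgebra $\ph(\fh)$.

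Next I would invoke Assumption (C) for $\cL'$ applied to these two structures: there exists $\psi \in \Aut(\cL')$ with $\psi(\ph(\fh)) = \fh'$. Then $\psi\circ\ph : \cL \to \cL'$ is an algebra isomorphism with $(\psi\circ\ph)(\cL_0^0) = {\cL'}_0^0$, so by Theorem \ref{thm:isotopychar} it is an isotopy of $\cL$ onto $\cL'$. Hence $\cL$ is isotopic to $\cL'$, which completes the argument. The only step requiring a little care is the verification that the grading transported across $\ph$ genuinely satisfies the Lie torus axioms, but this is routine since those axioms refer only to data that $\ph$ respects; all the real content sits in Theorem \ref{thm:isotopychar} and in Assumption (C), so there is no genuine obstacle here.
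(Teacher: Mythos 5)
Your proof is correct and follows essentially the same route as the paper: the paper's proof simply says "by Assumption (C) we can assume that $\ph(\fh) = \fh'$" and then invokes Theorem \ref{thm:isotopychar}, and your transport-of-grading argument is exactly the (correct) justification of that first step. No issues.
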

\begin{proof}  Suppose that $\ph : \cL \to \cL'$ is an isomorphism.
By Assumption (C) we can assume that $\ph(\fh) = \fh'$, where
$\fh = \cL_0^0$ and $\fh' = \cLp_0^0$.  Then
$\ph$ is an isotopy by Theorem \ref{thm:isotopychar}.
\end{proof}

Putting  this result together with Proposition \ref{prop:isotopyinv}, we have:

\begin{corollary}
\label{cor:quotientinv} The quotient external-grading group is an isomorphism
invariant for an fgc centreless Lie torus satisfying Assumption  (C).
\end{corollary}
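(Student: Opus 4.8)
The plan is to read this off immediately from Theorem~\ref{thm:isis} together with Proposition~\ref{prop:isotopyinv}: under Assumption~(C) isomorphism and isotopy of fgc centreless Lie tori coincide, and $\Lm/\Gm(\cL)$ is already known to be an isotopy invariant. So there is essentially no new computation to carry out. The one point that requires a little care is the precise meaning of ``isomorphism invariant'' here, because an fgc centreless Lie algebra may carry several distinct Lie torus structures, so a priori the group $\Lm/\Gm(\cL)$ could depend on the chosen external grading; the statement must be read as asserting both that the group is intrinsic to $\cL$ and that it is respected by algebra isomorphisms.

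Concretely, I would argue as follows. Let $\cL$ and $\cL'$ be fgc centreless Lie tori of types $(\De,\Lm)$ and $(\De',\Lmp)$ respectively, both satisfying Assumption~(C), and let $\ph : \cL \to \cL'$ be an isomorphism of $\base$-algebras. By Theorem~\ref{thm:isis} there exists an isotopy $\psi : \cL \to \cL'$ (not necessarily $\ph$ itself — this is the step where Assumption~(C) and the characterization of isotopies in Theorem~\ref{thm:isotopychar} enter, by conjugating $\cL_0^0$ to $\cLp_0^0$). Then Proposition~\ref{prop:isotopyinv} (applicable since fgc centreless Lie tori are centreless) gives $\Lm/\Gm(\cL) \isom \Lmp/\Gm(\cL')$. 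To settle the well-definedness issue, I would run exactly the same argument with $\cL' = \cL$ equipped with a second Lie torus structure, say of type $(\De',\Lmp)$, and with $\ph = \id_\cL$: the conclusion is that the two external-grading quotients of the single algebra $\cL$ are isomorphic as abstract groups, so one may unambiguously speak of \emph{the} quotient external-grading group of $\cL$, and the first part of the argument then shows that this group is preserved under every $\base$-algebra isomorphism.

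The main obstacle is not located in this corollary at all: it has been absorbed into its inputs. The substantive work is Theorem~\ref{thm:isis}, which in turn rests on the isotopy characterization of Theorem~\ref{thm:isotopychar} and on Assumption~(C), together with the isotopy invariance established in Proposition~\ref{prop:isotopyinv}. Granting those, the proof is a two-line deduction, and I would write it as such — taking care only to phrase the invariance in the two-sided form described above, so that the reader sees both that $\Lm/\Gm(\cL)$ is intrinsic to $\cL$ and that it behaves well under isomorphism.
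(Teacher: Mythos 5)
Your proposal is correct and matches the paper exactly: the corollary is obtained there by simply combining Theorem~\ref{thm:isis} with Proposition~\ref{prop:isotopyinv}, with no further argument given. Your extra remark on well-definedness (running the argument with $\ph = \id_\cL$ to handle two Lie torus structures on the same algebra) is a sensible clarification but does not change the substance of the deduction.
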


In the remaining sections,  we discuss the implications of Assumption (C) and our results  for the three  isomorphism problems listed at the end of Section \ref{sec:isomorphism}.

\myhead{Isomorphism of exceptional Lie tori}
In  the discussion of Problem 1 at the end of Section \ref{sec:isomorphism}, we used  the quotient external-grading group to show that the exceptional Lie tori in Table \ref{tab:exceptional} are pairwise not isotopic.
Therefore, it follows from Theorem \ref{thm:isis} that if exceptional
Lie tori satisfy Assumption (C), then  they are listed up to isomorphism
\emph{without redundancy} in Table  \ref{tab:exceptional}.

\myhead{Isomorphism of special linear Lie tori}
For special linear Lie tori, the quotient external-grading group $\Lm/\Gm$ adds no further information. That is, if the four invariants in Theorem
\ref{thm:fourinv} match for two special linear Lie tori, one can check that
the quotient external-grading groups also match.
However,  under assumption (C) we can prove the following theorem, which was proved in nullity 2
in \cite[Thm.~11.3.2]{ABP3} without Assumption (C).

\begin{theorem}
\label{thm:isomophismA}
Suppose that $\cA$ and $\cA'$ are fgc associative tori of nullity $n$, $r\ge 1$, and
the Lie tori $\spl_{r+1}(\cA)$ and $\spl_{r+1}(\cA')$ satisfy Assumption (C).
Then $\spl_{r+1}(\cA)$ and $\spl_{r+1}(\cA')$ are isomorphic if and only $\cA$ and $\cA'$
are isomorphic.
\end{theorem}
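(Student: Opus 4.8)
The plan is to prove the two directions separately, with the nontrivial one being the implication that an isomorphism $\spl_{r+1}(\cA) \simeq \spl_{r+1}(\cA')$ forces $\cA \simeq \cA'$. The "if" direction is immediate: any isomorphism $\psi : \cA \to \cA'$ of associative algebras induces an isomorphism $\Mat_{r+1}(\cA) \to \Mat_{r+1}(\cA')$ entrywise, which restricts to an isomorphism of derived algebras $\spl_{r+1}(\cA) \to \spl_{r+1}(\cA')$ (the characterization \eqref{eq:spl} shows the derived algebra is sent to the derived algebra, since $[\cA,\cA]$ is sent to $[\cA',\cA']$).

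For the "only if" direction, suppose $\varphi : \cL \to \cL'$ is an isomorphism, where $\cL = \spl_{r+1}(\cA)$ and $\cL' = \spl_{r+1}(\cA')$. Both are fgc centreless Lie tori, with the Lie torus structures described in Construction \ref{con:class}(A), and in particular with $\fh = \cL_0^0 = \sum_{i=1}^r \base(e_{ii}-e_{i+1,i+1})$ and similarly for $\fh'$. By Assumption (C) we may modify $\varphi$ by an automorphism of $\cL'$ so that $\varphi(\fh) = \fh'$; then by Theorem \ref{thm:isotopychar} the map $\varphi$ is an isotopy. Thus there are group isomorphisms $\varphi_\mathrm{r} : Q(\De) \to Q(\De')$ and $\varphi_\mathrm{e} : \Lm \to \Lm'$ and a homomorphism $\varphi_\mathrm{s} : Q(\De) \to \Lm'$ with $\varphi(\cL_\al^\lm) = {\cL'}_{\varphi_\mathrm{r}(\al)}^{\varphi_\mathrm{e}(\lm)+\varphi_\mathrm{s}(\al)}$. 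Here $\De$ and $\De'$ are both of type $\type{A}{r}$ (this is also forced by the root-grading type being an isomorphism invariant, Theorem \ref{thm:fourinv}), so $\varphi_\mathrm{r}$ is induced by an automorphism of the $\type{A}{r}$ diagram, i.e.\ it is either the identity or the negative-transpose symmetry (up to the Weyl group). After composing $\varphi$ with the automorphism $X \mapsto -J_{r+1}X^t J_{r+1}$ of $\cL'$ if necessary — which is the transpose-inverse automorphism and realizes the diagram flip — and with a further automorphism induced by a permutation matrix, we may assume $\varphi_\mathrm{r}$ sends the standard simple roots $\ep_i - \ep_{i+1}$ to the standard simple roots, hence $\varphi(\cL_{\ep_i-\ep_j}) = {\cL'}_{\ep_i-\ep_j}$ for all $i \neq j$.

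The heart of the argument is then to reconstruct the coordinate algebra from the root spaces. Recall that for $i \neq j$ one has $\cL_{\ep_i-\ep_j} = \cA\, e_{ij}$ as a $\base$-space (via the matrix realization), and the Lie bracket encodes the associative multiplication: for three distinct indices $i,j,k$, $[a\,e_{ij}, b\,e_{jk}] = ab\,e_{ik}$. Fixing, say, the off-diagonal "line" of spaces $\cL_{\ep_1-\ep_2}, \cL_{\ep_2-\ep_3}, \dots$ and the return space $\cL_{\ep_3-\ep_1}$ (this is where $r \geq 2$ is used; the case $r=1$, i.e.\ type $\type{A}{1}$, must be handled separately — but in that case $\spl_2(\cA)$ determines $\cA$ by the $r=1$ coordinatization results, or can be reduced to $r \geq 2$ is not available, so one argues directly that $\cA \oplus [\cA,\cA]$-data is recoverable from the $\spl_2$ structure), one defines $\psi : \cA \to \cA'$ by transporting through $\varphi$ on $\cL_{\ep_1-\ep_2} = \cA\, e_{12} \to {\cL'}_{\ep_1-\ep_2} = \cA'\, e_{12}$. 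Using the bracket identities $[a\,e_{12},b\,e_{23}] = ab\,e_{13}$ and the fact that $\varphi$ is a Lie homomorphism compatible with all the relevant root spaces, one checks $\psi(ab) = \psi(a)\psi(b)$ after normalizing $\varphi$ on the other root spaces by scalars so that $\psi$ is unital; linearity over $\base$ is clear. The one subtlety is that $\psi$ is a priori only determined up to the ambiguity of how $\varphi$ acts on each individual root space, so one must pin down compatibility by using the $\spl_2$-triples $(e_\al^0, h_\al^0, f_\al^0)$ — i.e.\ use \eqref{eq:LT2} and the normalization $[e_\al^0, f_\al^0] = \al^\vee$ — to fix the scalars coherently across all the off-diagonal spaces. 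This yields a well-defined associative algebra isomorphism $\psi : \cA \to \cA'$, completing the proof.

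The main obstacle will be this last bookkeeping step: showing that the transported maps on the various root spaces $\cL_{\ep_i-\ep_j}$ glue into a single, well-defined, unital algebra homomorphism on $\cA$, rather than a family of $\base$-linear bijections that only multiply correctly "locally." The key tools are the rigidity of the $\type{A}{r}$ root system together with the $\spl_2$-triple normalization built into the Lie torus axioms (LT2); concretely, one uses that $\varphi(h_\al^0)$ must be the corresponding coroot in $\fh'$ (since $\varphi(\fh)=\fh'$ and $\varphi_\mathrm{r}$ fixes simple roots), which forces $\varphi(e_\al^0)$ and $\varphi(f_\al^0)$ to be scalar multiples $c_\al e_\al'{}^0$ and $c_\al^{-1} f_\al'{}^0$, and these scalars propagate consistently. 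Once that is in place, the associativity and unitality of $\psi$ are formal consequences of $\varphi$ being a Lie homomorphism.
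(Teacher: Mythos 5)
Your overall strategy (use Assumption (C) and Theorem \ref{thm:isotopychar} to align the Cartan subalgebras, reduce the induced root-system automorphism to the identity, and then read off an isomorphism of coordinate algebras from the root spaces) is the same as the paper's, but two steps have genuine gaps. First, the map $X \mapsto -J_{r+1}X^tJ_{r+1}$ is \emph{not} an automorphism of $\cL' = \spl_{r+1}(\cA')$ when $\cA'$ is noncommutative: since $(XY)^t = Y^tX^t$ requires the entries to commute, negative transpose is an isomorphism of $\spl_{r+1}(\cA')$ onto $\spl_{r+1}({\cA'}^{\op})$, not onto itself. So you cannot simply compose with it to kill the diagram flip. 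The paper confronts this by first proving that every fgc associative torus satisfies $\cA \simeq \cA^{\op}$ (reduce via the tensor decomposition \eqref{eq:tensor} to $\cQ(\zeta)$, where swapping the generators gives the op-isomorphism), introducing the third algebra $\cL'' = \spl_{r+1}({\cA'}^{\op})$, and only at the very end using $\cA' \simeq {\cA'}^{\op}$ to convert a possible anti-isomorphism into an isomorphism. Without this observation your reduction step does not go through.

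Second, your reconstruction of the multiplication uses $[a\,e_{ij}, b\,e_{jk}] = ab\,e_{ik}$ for three distinct indices, which requires $r \ge 2$; the theorem is stated for $r \ge 1$, and your fallback for $r = 1$ ("$\cA \oplus [\cA,\cA]$-data is recoverable from the $\spl_2$ structure") is not an argument. The paper avoids the case split entirely by using only the two indices $1,2$ and the identity $[[a\,e_{12}, b\,e_{21}], c\,e_{12}] = (abc + cba)\,e_{12}$: setting $a = c = 1$ shows $\ph_{12}(1)$ is a unit, and after conjugating by $\diag(\ph_{12}(1)^{-1},1,\dots,1)$ one finds that $\ph_{12} : \cA \to \cA'$ is a Jordan isomorphism; Herstein's theorem for prime rings then gives that $\ph_{12}$ is an isomorphism or anti-isomorphism, and $\cA' \simeq {\cA'}^{\op}$ finishes. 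Note also that the normalization needed here is conjugation by a diagonal matrix with a \emph{unit of $\cA'$} as an entry, not a rescaling by elements of $\base$ on the one-dimensional spaces $\cL_\al^0$: pinning down the $\spl_2$-triples $(e_\al^0, h_\al^0, f_\al^0)$ as you propose does not determine $\ph$ on the full root space $\cA\,e_{ij}$, so your "bookkeeping" step is aimed at the wrong ambiguity.
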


\begin{proof}  The implication from right to left is clear and so we consider only the converse.
This can be seen to follow using Assumption (C) from Theorems 8.6(ii), 9.11 and 10.6 of \cite{AF} in the
cases $r=1$, $r=2$ and $r\ge 3$ respectively.  However for the readers convenience we give a uniform argument
that follows the approach in Section 9 of \cite{AF}.

We begin arguing that (as noted in \cite[Remark 9.12]{AF}) $\cA\simeq\cA^{\text{op}}$, where
$\cA^{\text{op}}$ denotes the \emph{opposite} algebra of $\cA$ with product
$(x,y) \mapsto yx$.  Indeed, using the tensor product decomposition \eqref{eq:tensor} of $\cA$,
it is sufficient to consider the case when $\cA = \cQ(\zeta)$, where $\zeta$ is a root of unity.
But in this case we have $\cA\simeq\cA^{\text{op}}$ under the homomorphism
exchanging $\qg_1$ and $\qg_2$.

Let $\cL = \spl_{r+1}(\cA)$, $\cL' = \spl_{r+1}(\cA')$  and $\cL'' = \spl_{r+1}({\cA'}^{\text{op}})$.
Then  $\fh = \cL_0^0$, $\fh' = {\cL'}_0^0$ and $\fh'' = {\cL''}_0^0$ are identified
in Construction \ref{con:class}(A) with $\sum_{i=1}^{r} \base (e_{ii} - e_{2r+1-i,2r+1-i})$
in $\cL$, $\cL'$ and $\cL''$ respectively.  So we can identify $\fh$, $\fh'$ and $\fh''$
in the evident fashion.  In this way, $\fh = \sum_{i=1}^{r} \base (e_{ii} - e_{2r+1-i,2r+1-i})$ is a subalgebra of $\cL$, $\cL'$ and $\cL''$; and we have
\[\De := \De_\base(\cL,\fh) =  \De_\base(\cL',\fh) =  \De_\base(\cL'',\fh).\]

Assume now that $\spl_{r+1}(\cA) \simeq \spl_{r+1}(\cA')$.   Then,
by Assumption (C) applied to $\cL'$, we have an isomorphism $\ph : \cL \to \cL'$ such that $\ph(\fh) = \fh$.
Thus, as in the proof of Theorem \ref{thm:isotopychar}, $\ph$ induces a linear automorphism  $\hat\ph$ of $\fh^*$ such that
$\ph(\cL_\al) = \cL'_{\hat\ph(\al)}$ for $\al\in \fh^*$.  So $\hat\ph$ is an automorphism
of the root system $\De$.

Next define $\psi : \cL'\to \cL''$ by $\psi(x) =  -x^t$ for $x\in \cL'$.  Then
$\psi$ is an algebra isomorphism with $\psi(\fh) = \fh$, and we have
$\hat\psi = -1$.
Thus, replacing $\cA'$ by ${\cA'}^{\text{op}}$ and $\ph$ by $\psi\ph$ if necessary,
we can assume that $\hat\ph$ is in the Weyl group of $\De$.

So, replacing
$\ph$ by $\pi\ph$, where $\pi$ is conjugation by an appropriate permutation matrix over $\base$, we can assume that $\hat\ph = 1$.
Therefore, for $1\le i \ne j \le r+1$, we have a linear bijection $\ph_{ij} : \cA \to \cA'$ with
\[\ph(ae_{ij}) = \ph_{ij}(a) e_{ij}\]
for $a\in \cA$.

Now, if $a,b,c\in \cA$, we have
\[[[ae_{12},be_{21}],ce_{12}] = [abe_{11}-bae_{22},ce_{12}] = (abc + cba)e_{12}.\]
Applying $\ph$ to this equation, yields
\begin{equation}\label{eq:ph1}
\ph_{12}(a)  \ph_{21}(b) \ph_{12}(c) + \ph_{12}(c) \ph_{21}(b) \ph_{12}(a) =  \ph_{12}(abc+cba).
\end{equation}
If we put $a=c=1$ in this equation we get
\begin{equation}\label{eq:ph2}
\ph_{12}(1)  \ph_{21}(b) \ph_{12}(1) = \ph_{12}(b)
\end{equation}
for $b\in\cA$.
So
$\ph_{12} = \ell_{\ph_{12}(1)} r_{\ph_{12}(1)} \ph_{21}$, where
$\ell_{\ph_{12}(1)}$ and $r_{\ph_{12}(1)}$ in $\End_\base(\cA')$ are left and right multiplication respectively by $\ph_{12}(1)$. Thus, $\ell_{\ph_{12}(1)} r_{\ph_{12}(1)} = r_{\ph_{12}(1)} \ell_{\ph_{12}(1)}$
is invertible, and so  $\ph_{12}(1)$ is a unit in $\cA'$.

Replacing $\ph$ by $\mu\ph$, where $\mu$ is conjugation by $\diag(\ph_{12}(1)^{-1},1,\dots,1)$, we can
assume that $\ph_{12}(1) = 1$.  Hence, by \eqref{eq:ph2}, $\ph_{21} = \ph_{12}$. So putting $b=1$
in \eqref{eq:ph1}, we have $\ph_{12}(a) \ph_{12}(c) + \ph_{12}(c) \ph_{12}(a) = \ph_{12}(ac + ca)$ for $a,c\in \cA$.
That is $\ph_{12} : \cA \to \cA'$ is a \emph{Jordan isomorphism}.  Hence, since
$\cA'$ is a prime ring, a theorem of Herstein \cite[Thm.~3.1]{H} tells us that $\ph_{12}$ is either an isomorphism or an anti-isomorphism of $\cA$ onto $\cA'$.  Since $\cA'\simeq{\cA'}^{\text{op}}$ it follows that
$\cA \simeq \cA'$.
\end{proof}

If all special linear Lie tori satisfy Assumption (C), Theorem \ref{thm:isomophismA} reduces their
classification to Neeb's classification of fgc associative tori mentioned in Section
\myref{sec:structure}.

\myhead{Isomorphism of special unitary Lie tori}
If all special unitary Lie tori satisfy Assumption (C), Theorem \ref{thm:isis} tells us that their classification up to isomorphism is reduced to determining when two such Lie tori are isotopic.
We are optimistic that the latter can be accomplished along the lines of \cite[\S 7]{AB}, perhaps using a notion of isotope for graded hermitian forms  (following the philosopy of \cite{AF}).
However, at this point the isotopy problem for special unitary Lie tori is open.

\myhead{Summary} If all fgc centreless Lie tori satisfy Assumption (C),
our work in this article has reduced their classification  up to isomorphism to solving the isotopy problem for special unitary Lie tori.

% Your bibliography           %<-------------------


\begin{thebibliography}{99}

%\bibitem{test}
%The Author,
%\newblock {\em Title of article},
%\newblock Some Journal, {\bf Vol} (Year), 1--100.

\bibitem{AABGP}
Allison,~B., S.~Azam, S.~Berman, Y.~Gao and A.~Pianzola,
\emph{Extended affine Lie algebras and their root systems},
Mem.~Amer.~Math.~Soc. \textbf{ 126} (603), 1997.

\bibitem{AB} Allison,~B., and G.~Benkart,  \emph{Unitary Lie algebras and
Lie tori of type $\type{BC}{r}$, $r\ge 3$},  in
Proceedings of the Conference on Quantum Affine Algebras, Extended Affine Lie Algebras and Applications,
Banff, Canada, 2008, Contemporary Mathematics \textbf{506} (2010), 1--47,  Amer. Math Soc., Providence, RI.

\bibitem{ABGP}
Allison,~B.,  S.~Berman, Y.~Gao and A.~Pianzola,
\emph{A characterization of affine Kac-Moody Lie algebras},
Comm. Math. Phys.  \textbf{185}  (1997), 671--688.


\bibitem{ABFP1}
Allison,~B., S.~Berman, J.~Faulkner, A.~Pianzola,
\emph{Realization of graded-simple algebras as loop algebras},
Forum Mathematicum,  \textbf{20} (2008), 395--432.

\bibitem{ABFP2}
Allison,~B., S.~Berman, J.~Faulkner, A.~Pianzola,
\emph{Multiloop realization of extended affine Lie algebras and Lie tori},
Trans. Amer. Math. Soc.  \textbf{361} (2009), 4807--4842.

\bibitem{ABP2.5}
Allison,~B., S.~Berman and  A.~Pianzola, \emph{Iterated loop algebras},
Pacific J. Math. \textbf{227} (2006), 1--42.

\bibitem{ABP3} Alliso,n~B., S.~Berman and A.~Pianzola
\emph{Multiloop algebras,
iterated loop algebras and extended affine Lie algebras of nullity 2},
arXiv:1002.2674v1  [math.RA].

\bibitem{AF} Allison,~B., and J.~Faulkner,  \emph{Isotopy for extended affine
Lie algebras and Lie tori}, in Developments and Trends in Infinite-Dimensional
Lie Theory, 3--43, Progress in Mathematics, Vol.~288,  Birkh\"auser, Boston, MA, 2011.


\bibitem{AFY} Allison,~B.,  J.~Faulkner and Y.~Yoshii,
\emph{Structurable tori}, Comm. Algebra  \textbf{36}  (2008),  2265--2332.

\bibitem{AG} Allison,~B., and Y.~Gao,
\emph{The root system  and the core of an extended affine Lie
algebra}, Selecta Math. (N.S.) \textbf{ 7} (2001), no. 2,
149--212.

\bibitem{AY} Allison,~B., and Y.~Yoshii, \textit{Structurable tori and
extended affine Lie algebras of type } BC$_{1}$, J. Pure and Appl. Algebra
\textbf{184} (2003), 105-138.

\bibitem{BN} Benkart,~G., and E.~Neher,
\emph{The centroid of extended affine and root graded Lie algebras},
J.~Pure Appl.~Algebra {\bf 205} (2006), 117--145.

\bibitem{BGK} Berman,~S., Y.~Gao and  Y.~Krylyuk, \emph{Quantum tori and
the structure of elliptic quasi-simple Lie algebras},
J. Funct. Anal. {\bf 135} (1996), 339--389.

\bibitem{BGKN} Berman,~S., Y.~Gao,  Y.~Krylyuk and E.~Neher, \emph{The
alternative torus and the structure of elliptic quasi-simple Lie
algebras of type $A_2$}, Trans. Amer. Math. Soc. {\bf 347} (1995),
no.~11, 4315--4363.

\bibitem{Bo} Bourbaki,~N., ``Lie groups and Lie algebras'', Chapters 4--6,
Translated from the
French, Elements of Mathematics, Springer-Verlag, Berlin, 2002.

\bibitem{CGP} Chernousov,~V., P.~Gille and A.~Pianzola,
\emph{Conjugacy theorems for twisted forms of simple Lie
algebras extended over Laurent polynomial rings},
work in progress.


\bibitem{EMO} Erickson,~T.S.,  W.S.~Martindale, 3rd and J.M.~Osborn,
\emph{Prime nonassociative algebras},  Pacific J. Math. {\bf 60}
(1975), 49--63.

\bibitem{F} Faulkner,~J.,  \emph{Lie tori of type BC$_2$ and structurable
quasitori}, Comm. Algebra  \textbf{36}  (2008),  2593--2618.

\bibitem{GP1}
Gille,~P., and A.~Pianzola,
\emph{Galois cohomology and forms of algebras over Laurent polynomial rings},
Math. Ann. 338 (2007), 497--543.

\bibitem{GP2}
Gille,~P.,  and A.~Pianzola,
\emph{Galois cohomology and forms of algebras over Laurent polynomial rings II},
in preparation.


\bibitem{H}
Herstein,~I.N., ``Topics in ring theory'', The University of Chicago Press, Chicago, 1969

\bibitem{J} Jacobson,~N., ``Lie algebras'', Dover, New York, 1979.

\bibitem{Lam} Lam,~T.Y., ``A first course in noncommutative rings'', Second edition.
Graduate Texts in Mathematics, 131. Springer-Verlag, New York, 2001.

\bibitem{Na}
Naoi,~K., \emph{Multiloop Lie algebras and the construction of extended affine Lie algebras},
J. Algebra  \textbf{323}  (2010), 2103--2129.


\bibitem{Neeb}
Neeb,~K.-H.,  \emph{On the classification of rational quantum
tori and the structure of their automorphism groups},
Canad. Math. Bull. \textbf{ 51} (2008), 261--282.

\bibitem{Neh1} Neher,~E.,  \emph{Lie tori},
C. R. Math. Acad. Sci. Soc. R. Can. \textbf{ 26} (2004), 84--89.

\bibitem{Neh2} Neher,~E.,  \emph{Extended affine Lie algebras},
C. R. Math. Acad. Sci. Soc. R. Can. \textbf{ 26} (2004), 90--96.

\bibitem{Neh3} Neher,~E.,    \emph{Extended affine Lie algebras and other generalizations
of affine Lie algebras - a survey}, in
Developments and Trends in Infinite-Dimensional
Lie Theory, 53-125, Progress in Mathematics, Vol.~288,  Birkh\"auser, Boston, MA, 2011.


\bibitem{Neh4} Neher,~E., \emph{Lectures on extended affine Lie algebras},
arXiv:1003.2352v1 [math.RA].

\bibitem{P1}  Pianzola,~A.,
\emph{Locally trivial principal homogeneous spaces and conjugacy theorems for Lie algebras},
J. Algebra \textbf{275} (2004),
600-614.

\bibitem{P2}  Pianzola,~A.,
\emph{Vanishing of $H^1$ for Dedekind rings and applications to loop algebras},
C.R. Acad. Sci. Paris, Ser I \textbf{ 340} (2005), 633-638.

\bibitem{Se}  Seligman,~G.B., ``Rational methods in Lie algebras'',
Lect. Notes  in Pure and Applied Math. \textbf{ 27}, Marcel
Dekker, New York, 1976.

\bibitem{Tan} Tan,~S., \emph{TKK algebras and vertex operator representations},
J. Algebra \textbf{211} (1999),
298--342.

\bibitem{T}  Tits,~J., \emph{Classification of algebraic semisimple groups},
Algebraic Groups and Discontinuous Subgroups (Proc. Sympos. Pure Math., Boulder, Colo., 1965),
pp. 33--62, Amer. Math. Soc., Providence, R.I., 1966.

\bibitem{Y1} Yoshii,~Y., \emph{Coordinate algebras of extended affine Lie
algebras of type }$\text{A}_{1}$, J. Algebra \textbf{234} (2000),
128--168.

\bibitem{Y2} Yoshii,~Y., \textit{Classification of quantum tori with
involution}, Canad. Math. Bull. \textbf{45} (2002), no. 4, 711--731.

\bibitem{Y3}
Yoshii,~Y., \emph{Root systems extended by an abelian group and
their Lie algebras}, J. Lie Theory {\bf 14}
(2004), pp.~371--374.

\bibitem{Y4}
Yoshii,~Y.,  \emph{Lie tori---A simple characterization of extended
affine Lie algebras\/},   Publ. Res. Inst. Math. Sci.  42  (2006),   739--762.


\end{thebibliography}
\end{document}